\def\Xint#1{\mathchoice
{\XXint\displaystyle\textstyle{#1}}%
{\XXint\textstyle\scriptstyle{#1}}%
{\XXint\scriptstyle\scriptscriptstyle{#1}}%
{\XXint\scriptscriptstyle\scriptscriptstyle{#1}}%
\!\int}
\def\XXint#1#2#3{{\setbox0=\hbox{$#1{#2#3}{\int}$ }
\vcenter{\hbox{$#2#3$ }}\kern-.6\wd0}}
\def\dashint{\Xint-}
\renewcommand{\le}{\leqslant}
\renewcommand{\leq}{\leqslant}
\renewcommand{\ge}{\geqslant}
\renewcommand{\geq}{\geqslant}
\newcommand{\R}{\mathbb{R}}
\newcommand{\e}{\varepsilon}
\theoremstyle{plain}
\newtheorem{theorem}{Theorem}[section]
\theoremstyle{plain}
\newtheorem{proposition}[theorem]{Proposition}
\theoremstyle{plain}
\newtheorem{lemma}[theorem]{Lemma}
\theoremstyle{plain}
\newtheorem{corollary}[theorem]{Corollary}
\theoremstyle{plain}
\theoremstyle{plain}
\newtheorem{remark}[theorem]{Remark}
\theoremstyle{plain}
\newtheorem{definition}[theorem]{Definition}
\numberwithin{equation}{section}
\begin{document}
\title[A three-dimensional water wave problem]{One-dimensional symmetry for the solutions
	of a three-dimensional water wave problem}

\author[Eleonora Cinti]{Eleonora Cinti}
\address{E.C., 
Dipartimento di Matematica, Universit\`a di Bologna,
Piazza di Porta San Donato 5, 40126 Bologna, Italy}
\email{eleonora.cinti5@unibo.it}

\author[Pietro Miraglio]{Pietro Miraglio}

\address{P.M., Dipartimento di Matematica, Universit\`a di Milano, Via Cesare Saldini 50,
20133 Milan, Italy, Departament de Matem\`{a}tica Aplicada I,
Universitat Polit\`{e}cnica de Catalunya, Diagonal 647, 08028 Barcelona, Spain}
\email{pietro.miraglio@unimi.it}

\author[Enrico Valdinoci]{Enrico Valdinoci}
\address{E.V., 
Dipartimento di Matematica, Universit\`a di Milano, Via Cesare Saldini 50,
20133 Milan, Italy, and
School of Mathematics and Statistics,
University of Melbourne,
813 Swanston St, Parkville VIC 3010, Australia,
and
Istituto di Matematica Applicata e Tecnologie Informatiche,
Via Ferrata 1, 27100 Pavia, Italy}

\email{enrico@math.utexas.edu}
\thanks{It is a pleasure to thank Xavier Cabr\'{e}
for his very interesting comments and for very pleasant scientific
discussions.
The first author was supported by MINECO grant MTM2014-52402-C3-1-P, the
ERC Advanced Grant 339958 Complex Patterns for Strongly Interacting Dynamical Systems - COMPAT. B and is part of the Catalan research group 2014 SGR 1083.
The first and third authors were supported by the ERC Starting Grant 
``E.P.S.I.L.O.N.'' 
Elliptic Pde's and Symmetry of Interfaces and Layers for Odd Nonlinearities.
The third author was supported by the ARC Discovery Project grant ``N.E.W.''
Nonlocal Equations at Work.}
\subjclass[2010]{35J60, 35R10.}
\keywords{symmetry properties, nonlocal operators, energy estimates}

	\begin{abstract}
	We prove a one-dimensional symmetry result for a weighted Dirichlet-to-Neumann problem arising in a model for water waves in dimension $3$. More precisely we prove that minimizers and bounded monotone solutions depend on only one Euclidean variable.
	The analogue of this
	result for the 2-dimensional case (and without weights) was established in~\cite{DllV}. In this paper, a crucial ingredient in the proof is given by an energy estimate for minimizers obtained via a comparison argument.
	\end{abstract}

\maketitle

	\section{Introduction}
	
	\subsection{A water wave model}
	
	In this paper, we establish one-dimensional symmetry results
	for solutions of a Dirichlet to Neumann problem which arises in a model for water waves. 

	A classical water wave model is that of considering an ideal fluid with
	density~$\varrho$
	and velocity~$V$
	in the spatial region~$\R^2\times[0,H]$ (that is the ``sea'', which is
	assumed to be of depth~$H>0$). 
	For convenience, one can endow~$\R^2\times[0,H]$
	with coordinates~$x\in\R^2$ and~$y\in[0,H]$
	(we consider the level~$\{y=H\}$ as the ``bottom of the sea''
	and the level~$\{y=0\}$ as the ``surface of the sea''; in this notation,
	$y$ represents the ``depth of the sea'').
	The fact that the fluid
	is incompressible gives that~${\rm div }(\varrho V)=0$ and the irrotationality condition
	that~$V=\nabla v$ in~$\R^2\times(0,H)$. Assuming that the bottom of the sea
	is made of solid material, the impenetrability of the matter gives that the
	vertical velocity vanishes along~$\{y=H\}$.
	Then, given the values of~$v$ along the surface of the sea (and denoting
	such datum by~$u$), one is
	interesting in finding the vertical velocity on the surface, possibly weighted
	by the density of the fluid (this vertical velocity is, roughly
	speaking, responsible for the formation of a wave starting from the rest position
	of a ``flat sea''). The problem turns out to be linear with respect to the derivatives of the datum $u$ and semilinear in virtue of the nonlinearity $f(u)$, so
	it is convenient to denote the vertical velocity
	on the surface by~${\mathcal{L}}u$ (with a minus sign that we introduce for later convenience).
	In this setting, writing~$V:=(V_1,V_2,V_3)\in\R^3$,
	and denoting by~$v_y$ the derivative of~$v$ with respect to the vertical variable~$y$,
	the problem can be formulated as
	\begin{align}\label{1st}
	\begin{cases}
	0={\rm div }(\varrho V)={\rm div }(\varrho\nabla v ) 
	\quad&{\mbox{ in }}\R^2\times(0,H)\\
	0=V_3\big|_{y=H}= v_y\big|_{y=H}\quad&{\mbox{ on }}\R^2\times\{y=H\} \\
	u=v\big|_{y=0}\quad&{\mbox{ on }}\R^2\times\{y=0\} \\
	{\mathcal{L}}u=f(v)\quad&{\mbox{ on }}\R^2\times\{y=0\},
	\end{cases}
	\end{align}
{where ${\mathcal{L}} u=-\varrho v_y\big|_{y=0}$ and $f:\R\to\R$ is a given,
smooth
function.} When~$\varrho:=1$ and~$H\to+\infty$
(which is the case of a fluid with constant density
and an ``infinitely deep sea''), the problem in~\eqref{1st} is related to the square root of
the Laplacian, see e.g.~\cite{CS}. For finite values of~$H$
the operator described in~\eqref{1st} is nonlocal, but also not of purely fractional type.
Hence, in the sequel, we normalize the domain by setting~$H:=1$.

\subsection{Dirichlet to Neumann operators}

More specifically, in this paper, motivated by~\eqref{1st},
we consider the slab $\R^n\times[0,1]$ with coordinates $x\in\R^n, y\in[0,1]$
	and a function $u:\R^n\to\R$. We then consider $v=v(x,y)$ as the bounded
	extension of $u:\R^n\to\R$ in the slab~$\R^n\times (0,1)$ satisfying the following problem
	with density $\varrho(y)=y^a$, where $a\in (-1,1)$:
	\begin{align}
	\label{lvsis}
	\begin{cases}
	\mathrm{div}(y^a\nabla v)=0 \qquad &\text{in}\,\,\R^n\times (0,1)\\
	v(x,0)=u(x) \qquad &\text{on}\,\,\R^n\times \{y=0\}\\
	v_y(x,1)=0 \qquad &\text{on}\,\,\R^n\times \{y=1\}.
	\end{cases}
	\end{align}
Then, in view of the physical description in~\eqref{1st}, the problem in~\eqref{lvsis}
naturally leads to the study of the 
Dirichlet to Neumann operator $\mathcal{L}_a$ given by
	\[
	\mathcal{L}_a u(x)=-\displaystyle\lim_{y\rightarrow 0}y^av_y(x,y)
	\]
Notice that\footnote{In this paper we will always work with the extended problem satisfied by $v$ (see problem \eqref{mainsis} below), hence we do not actually need to define the operator $\mathcal L_a$ nor to discuss under which conditions we have uniqueness for solutions to \eqref{lvsis}.  We have chosen to introduce the Dirichlet-to-Neumann operator for the sake of completeness and to make a comparison with some well known related results for nonlocal equations involving the fractional Laplacian.}
the operator $\mathcal{L}_a$ is given by the operator $\mathcal{L}$ appearing 
in (\ref{1st}) for the choice $\varrho(y)=y^a$, which is the weight that
we consider throughout the paper.
We also observe that the operator $\mathcal L_a$ is closely
	related to the fractional Laplacian $(-\Delta)^s$ with $s=(1-a)/2$, see e.g.~\cite{CS},
	though it is not equal to any purely
	fractional operator. The case $a=0$, which corresponds to
	$v$ being the harmonic extension of $u$ in $\R^n \times (0,1)$, was considered in~\cite{DllV}, where the authors write explicitly the Fourier symbol of the operator $\mathcal L_0$ in this specific case and show that, for large frequencies, the Fourier symbol of $\mathcal L_0$ is asymptotic to the Fourier symbol of the half-Laplacian (observe that for $a=0$ we have $s=1/2$). 
	
	We study here the one-dimensional symmetry of certain bounded solutions
	to the problem
	\begin{equation}
	\label{eq12}
	\mathcal{L}_au=f(u) \qquad \text{in}\,\, \R^n,
	\end{equation}
	where $f\in C^{1,\gamma}(\R)$, with $\gamma>\max \{0,-a\}$, $n=3$ and $a\in(-1,1)$.
	
	{The results obtained in this paper
	extend a known result by
	de la Llave and the third author in~\cite{DllV}. In that paper,  one-dimensional symmetry of monotone solutions of (\ref{eq12}) is established when $n=2$ and $a=0$.} With the extension~\eqref{lvsis}, our problem can be formulated in the following local way, 
	{that we are going to consider throughout the paper}:
	\begin{align}
	\label{mainsis}
	\begin{cases}
	\mathrm{div}(y^a\nabla v)=0 \qquad &\text{in}\,\,\R^n\times (0,1)\\
	-\displaystyle\lim_{y\rightarrow 0}y^av_y=f(v)\qquad &\text{on}\,\,\R^n\times\{y=0\}\\
	v_y(x,1)=0 \qquad &\text{on}\,\,\R^n\times \{y=1\}.
	\end{cases}
	\end{align}
	{We observe that this formulation of our problem is exactly (\ref{1st}) for the general $n$-dimensional case, with $H=1$ and the weight $\varrho(y)=y^a$}. We will show that, if $v$ is a minimizer (in the sense of Definition \ref{minimizer} below) or a bounded monotone solution for problem~\eqref{mainsis} with $n=3$, then there exist a function $v_0:\R\times(0,1)\to\R$ and a
	vector~$\omega\in S^2$ such that
	\[
	v(x,y)=v_0(\omega\cdot x, y) \qquad \text{for every}\,\,x\in\R^3,\,y\in(0,1).
	\]
	In particular, the trace $u$ of $v$ on $\{y=0\}$ exhibits one-dimensional symmetry, i.e. it is a function of only one Euclidean variable.
	See also Chapter 3 of~\cite{BV} for additional discussions.
	
	It is interesting to point out that the results that we give here are new even
	in the case~$a:=0$, corresponding to uniform density of the fluid.
	Nevertheless, we provided a general setting for the problem
	in~\eqref{mainsis} and we believe that such generality is worthwhile for a series
	of reasons:
	\begin{itemize}
	 \item {F}rom a pure mathematical perspective,
	 weights of the type~$y^a$ belong to the Mouckenhoupt\footnote{As customary, one says
	 that a weight~$w$ belongs
	 to the Mouckenhoupt class~$A_2$ if
	 there exists~$C>0$ such that, for all balls~$ B$, 
it holds that 
$$ \dashint_B w ( x ) \,d x \;
\dashint_B\frac1{w ( x )} \,d x 
\le C,$$
with~$\dashint$ denoting average.
Roughly speaking, Mouckenhoupt weights may be singular or degenerate,
but they cannot be ``too singular or too degenerate'', in an integral sense.
Also, $w$ belongs to~$A_2$ if and only if so does~$1/w$.
}
	 class~$A_2$,
	 see~\cite{FKS,FJK}, which plays a special interest in the analysis of partial differential
	 equations with weights, since, in a sense, these weights constitute the fundamental
	 example of nontrivial, possibly singular or degenerate, weights, for which
	 a ``good elliptic theory'' is still possible;
	 \item With respect to fractional operators, it is important
	 to study different values of~$a$, corresponding to different values
	 of the fractional parameter (and, in this case, the value~$a:=0$ is
	 often a fundamental threshold dividing ``local'' and ``nonlocal'' behaviors
	 at large scales, see e.g. Theorem~1.5 in~\cite{MR2948285});
	 \item In applications, weights of the type~$y^a$ can model laminated
	 materials (see e.g.~\cite{MR2653742}) and, in the context of fluids, describe situations
	 in which the density of the fluid only depends on the depth;
	 \item In other situations, equations as in~\eqref{mainsis} can be related
	 to models in biological mathematics, in which~$v$ represents for instance
	 the density of a given population: in this setting, many real-world experiments
	 have confirmed that different populations exhibit anomalous diffusion,
	 and that the diffusion parameters vary from one species to another (see e.g.~\cite{Nature07}
	 and the references therein),
	 therefore it is relevant for concrete models to study nonlocal equations for all the parameter
	 values;
	 \item Most importantly, in our perspective, different values of~$a$
	 allow us more easily to (at least formally) interpolate between classical
	 partial differential equations (in a sense, corresponding to the case~$a\to1$)
	 and strongly nonlocal equations (corresponding to the case~$a\to-1$). Hence,
	 since the case~$a:=0$ is of course extremely important to address,
	 it is also crucial to comprise in the analysis all the values~$a\in(-1,1)$, so to develop a
	 much better intuition of the problem and to
	 permit the use of continuity
	 and bifurcation methods.
	 In this way, the investigation of nonlocal
	 problems produces results for classical questions which would have
	 not been available with other techniques. For instance,
	a very neat example in which fractional methods lead to new
and important results in classical cases is embodied by the recent
work~\cite{FS},
in which the Authors brilliantly exploit nonlocal tools developed e.g. in~\cite{SV}
and~\cite{023495}
to obtain symmetry result in a Peierls-Nabarro 
model;
\item The investigation of fractional problems
in the full range of the fractional exponent cases~$s\in\left(\frac12,1\right)$,
$s=\frac12$ and~$s\in\left(0,\frac12\right)$ is also important to understand
the different behaviors of the energy contributions (see e.g. \cite{MR2948285}).
As a matter of fact, typically, when~$s=\in\left(\frac12,1\right)$,
in spite of the nonlocal character of the problem,
the major contribution is ``of local type'', in the sense that it comes
from a very well delimited region of the space
in which ``all the action takes place''.
Conversely, when~$s\in\left(0,\frac12\right)$ the major contribution
comes ``from infinity'' and long-range interactions become predominant.
In this spirit, the case~$s=\frac12$ keeps a balance between these two
energy tendencies and, in fact, when~$s=\frac12$
the energy contributions typically ``repeat themselves at each dyadic scale'',
and, in practice,
this special additional invariance
often produces logarithmic energy terms that are characteristic
for the case~$s=\frac12$.
	\end{itemize}

	\subsection{Connection with the Allen-Cahn equation and a conjecture by De Giorgi}\label{DG}
	As we mentioned above, our operator $\mathcal L_a$ 
	is related to the $s$-Laplacian~$(-\Delta)^s$, for $s:=(1-a)/2$,
and the fractional
	Laplacian can be seen as a Dirichlet-to-Neumann operator
	for a local problem in the halfspace $\R^{n+1}_+$. 
	More precisely, Caffarelli and Silvestre in~\cite{CS} 
	proved that one can study a semilinear nonlocal problem of the form
	\begin{equation}
	\label{sL}
	(-\Delta)^su=f(u) \qquad \text{in}\,\,\R^n,
	\end{equation}	
	by studying the associated local problem
	\begin{equation}\label{CS}\begin{cases}
	\mathrm{div}(y^{1-2s}\nabla v)=0 \qquad &\text{in}\,\,\R^{n+1}_+\\
	-\displaystyle\lim_{y\rightarrow 0}y^av_y=f(u)\qquad &\text{on}\,\,\R^n\times\{y=0\}.
	\end{cases}
	\end{equation}
	The problem of finding one-dimensional symmetry results for monotone solutions to~(\ref{sL}) is the counterpart, in the fractional setting,
	of a celebrated conjecture stated in 1978 by E. De Giorgi about bounded
	and monotone solutions of the classical Allen-Cahn equation~$-\Delta u=u-u^3$, see~\cite{MR533166}.
	The nonlinearity $u-u^3$, whose primitive (up to a sign) has a double well potential structure, arises in the study of phase transitions problem. 
	
	In dimension $n=2$ the fractional De Giorgi conjecture has been
	proved in~\cite{CSM} for $s=\frac{1}{2}$ and in~\cite{YV, CY2, SV}
	for every~$s\in(0,1)$. The same result in dimension $n=3$ has been established
	by Cabr\'{e} and the first author in~\cite{CC1} and~\cite{CC2} with respectively $s=\frac{1}{2}$ and $s\in(\frac{1}{2},1)$. Then, Savin proved in~\cite{S} the conjecture in dimensions $4 \leq n\leq 8$ for $s\in(\frac{1}{2},1)$ and with the additional assumption
	\begin{equation}
	\label{Savin}
	\lim_{x_n\to\pm\infty}u(x',x_n)=\pm1.
	\end{equation}
	Very recently, the conjecture has been proved in dimension $n=3$ and with $s\in(0,\frac{1}{2})$
	independently and with different methods
	by Dipierro, Farina and the third author in~\cite{DFV} (using an improvement
	of flatness result by~\cite{XFAH}) and by Cabr\'{e}, Serra and the first author
	in~\cite{CCS} (by a different approach which relies on some sharp energy
	estimates and a blow-down convergence result for stable solutions).
	
	In another very recent result, Figalli and Serra proved in~\cite{FS} the conjecture for monotone solutions of the half-Laplacian in dimension four without assumption~(\ref{Savin}) and we plan to further investigate this new method in the setting of water waves.
	
	Focusing on the dimension that
	we take into account in this paper, i.e. $n=3$,
	we want to stress an important difference between the 
	water wave problem and the fractional De Giorgi conjecture. As mentioned above, a different approach is needed to prove the one-dimensional symmetry of solutions to~(\ref{sL}) when the parameter $s$ crosses the value $\frac{1}{2}$. This is due to fact that the optimal energy estimates for solutions of~(\ref{sL}) change depending whether $s$ is above or below $1/2$, as shown in ~\cite{CC2}. In particular, only when $s\in [1/2,1)$ these energy estimates are enough to apply a Liouville type result and hence to obtain one-dimensional symmetry.  As we are going to see, this does not happen in our case, since the framework is $\R^n\times(0,1)$ and the weight $y^a$
	is integrable between~$ 0$ and~$ 1$. This fact gives us some energy estimates that do not depend on $a$ and allows us to prove one-dimensional symmetry of certain solutions to~(\ref{eq12}) with the same method for all the powers $a\in(-1,1)$, namely $s\in(0,1)$.
	
	For similar results in further dimensions, both in the classical and in the nonlocal
	case, see also~\cite{AAC, AC, YV, CC2, CY2, CSM, SV, S, DSV}.
	
	We stress that these types of nonlocal or fractional problems
	usually present several sources of additional difficulties with respect
	to the classical cases, such as:
	\begin{itemize}
	 \item Lack of explicit barriers
	 and impossibility of performing straightforward calculations;
	 \item Slow decay of the solutions at infinity;
	 \item Long range interactions and contributions coming from infinity;
	 \item Infinite energy amounts;
	 \item Formation of new types of interfaces (such as ``nonlocal minimal surfaces'').
	\end{itemize}
	In general, we also stress that nonlocal operators may present
	important differences with respect to the classical ones,
	also at a very basic level (see e.g. the introductory discussion
	in Section~2.1 of~\cite{ABA}).
	
\subsection{Variational formulation}
	As one can easily observe, problem~\eqref{mainsis} has a variational structure. Let $B_R\subset \R^n$ denote the ball of radius $R$ centered at $0$, 
	and $C_R$ the cylinder \begin{equation}\label{CRCR}C_R:=B_R\times(0,1) .\end{equation}
	The (localized) energy functional associated to problem~\eqref{mainsis} is given by
	\[
	\mathcal{E}_R(v)=\int_{C_R}y^a\lvert \nabla v \rvert^2 \,dx\,dy + \int_{ B_R\times \{y=0\}} G(v)\,dx,
	\]
	where the associated potential $G$ is such that $G'=-f$.
	
	We can now give the definitions of \textit{minimizer} and of \textit{stable solution}  for problem~\eqref{mainsis} (problem~\eqref{eq12} respectively) in a standard way.
	
	\begin{definition}\label{minimizer}
		We say that a bounded $C^1( \R^n\times (0,1))$ function $v$  is a  minimizer for~\eqref{mainsis} if
		\[
		\mathcal{E}_R(v)\leq \mathcal{E}_R(w)
		\]
		for every $R>0$ and for every bounded competitor $w$ such that $v\equiv w$ on $\partial B_R\times (0,1)$.\\
		We say that a bounded $C^1(\R^n)$ function $u$ is a minimizer for~\eqref{eq12} if its extension $v$ satisfying~\eqref{lvsis} is a minimizer for~\eqref{mainsis}.
	\end{definition}

	\begin{definition}\label{STAB}
		We say that a bounded solution $v$ of~\eqref{mainsis} is \textit{stable} if 
		\[
		\int_{\R^n\times[0,1]}y^a\lvert \nabla \xi \rvert^2\,dx\,dy-\int_{\R^n\times \{y=0\}}f'(u)\xi^2\,dx \geq0
		\]
		for every function $\xi\in C_0^1(\R^n\times [0,1])$. \\
		
		We say that a bounded function $u$ is a \textit{stable} solution for~\eqref{eq12} if its extension $v$ satisfying~\eqref{lvsis} is a stable solution for~\eqref{mainsis}.
	\end{definition}
	
	Clearly, if $v$ is a minimizer for~\eqref{mainsis} then, in particular, it is a stable solution. As we will observe
	later on in Section~\ref{stable.solutions} (see Remark~\ref{remark.mono}), also a monotone solution is stable, hence stability is a weaker notion of both minimality and monotonicity.
	
	The one-dimensional symmetry result in two dimensions for the particular case $a=0$
	obtained in~\cite{DllV} follows as a corollary of a more general result (see Theorem 1 in~\cite{DllV}), which states that a bounded monotone solution satisfying a certain energy estimate is necessarily one-dimensional.
	
	More precisely, Theorem 1 in~\cite{DllV} requires the existence of a positive constant $C$ such that:
	\begin{equation}
	\label{890}
	\int_{C_R}\lvert\nabla_x v(x,y)\rvert^2\,dx\,dy\leq CR^2,
	\end{equation}
	where $\nabla_x$ denotes the gradient in the $x$-variables.
	
	This is trivially true in the case $n=2$, thanks to the fact that the gradient of $v$ is bounded, by standard elliptic estimates (see~\cite{GT}).
	
	\subsection{Main results}
	
	The first result of this paper generalizes Theorem 1 of~\cite{DllV}
	to the class of stable solutions.
	As it will become clear from the proof, this generalization in itself
	is not too difficult but it will be technically
	crucial for the purpose of this paper, and in particular
	to prove the one-dimensional symmetry of monotone solutions in $\R^3$.	
	Therefore, we state explicitly this result as follows:
	
	\begin{theorem}\label{DllV}
		Let $f\in C^{1,\gamma}(\R)$, with $\gamma>\max \{0,-a\}$, and let $v$ be a bounded and stable solution of~(\ref{mainsis}).
		
		Suppose that there exists $C>0$ such that
		\begin{equation}\label{EB}
		\int_{C_R}y^a |\nabla_x v(x,y)|^2\,dx\,dy\le
		CR^2
		\end{equation}
		for any $R\ge 2$.
		
		Then, there exist $v_0:\R\times(0,1)\rightarrow\R$ and $\omega
		\in {\rm S}^{n-1}$ such that
		\begin{equation}
		\label{SYY}
		v(x,y)=v_0 (\omega\cdot x,y)\qquad{\mbox{
				for any $(x,y)\in\R^{n+1}_+$.}}
		\end{equation}
In particular, the trace $u$ of $v$ on $\{y=0\}$ can be written as $u(x)=u_0(\omega\cdot x)$.
		
		Moreover, $u_0'>0$ or $u_0'\equiv0$.
	\end{theorem}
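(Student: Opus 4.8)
The plan is to follow the classical Berestycki–Caffarelli–Nirenberg / Savin scheme adapted to the slab, exploiting stability together with the energy growth hypothesis \eqref{EB}. The starting observation is that each partial derivative $v_{x_i}$ satisfies the linearized equation: differentiating \eqref{mainsis} in $x_i$ gives $\mathrm{div}(y^a\nabla v_{x_i})=0$ in $\R^n\times(0,1)$, the Neumann-type condition $-\lim_{y\to 0}y^a(v_{x_i})_y=f'(v)\,v_{x_i}$ on $\{y=0\}$, and $(v_{x_i})_y=0$ on $\{y=1\}$. By elliptic regularity (this is where $\gamma>\max\{0,-a\}$ enters, to give $v\in C^{1,\alpha}$ up to the boundary and make the trace of $f'(v)v_{x_i}$ meaningful) these derivatives are bounded. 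Thus $\sigma:=|\nabla_x v|$ and the individual $v_{x_i}$ are bounded solutions of the same linearized problem, and the idea is to compare $v_{x_i}$ with $\sigma$.

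The core step is a Liouville-type argument. Let $\varphi$ be a positive solution of the linearized equation (take $\varphi=\sigma$, or more carefully a positive combination of the $v_{x_i}$; positivity on $\{y=0\}$ comes from the fact that $\sigma>0$ unless $\nabla_x v\equiv 0$, and one propagates strict positivity into the slab via Harnack/the strong maximum principle — if $\sigma$ vanishes somewhere we are already done since then $v$ depends on $y$ only, contradicting boundedness unless it is constant, or directly yields the conclusion). Writing $v_{x_i}=\varphi\,\psi_i$, the stability inequality from Definition~\ref{STAB}, tested against $\xi=\varphi\,\eta$ for $\eta\in C^1_0$, yields after the standard manipulation (integrating by parts and using the equation for $\varphi$) the weighted estimate
\[
\int_{\R^n\times(0,1)} y^a\,\varphi^2\,|\nabla\psi_i|^2\,\eta^2\,dx\,dy\;\le\;\int_{\R^n\times(0,1)} y^a\,\varphi^2\,\psi_i^2\,|\nabla\eta|^2\,dx\,dy.
\]
Now choose the logarithmic cutoff $\eta=\eta_R$ equal to $1$ on $C_{\sqrt R}$, supported in $C_R$, with $|\nabla\eta_R|\le C/(R\log\ldots)$ of the Moser type; since $\varphi^2\psi_i^2=v_{x_i}^2\le|\nabla_x v|^2$ and $\int_{C_R}y^a|\nabla_x v|^2\le CR^2$ by \eqref{EB}, the right-hand side is bounded by a constant independent of $R$ (here the slab structure is essential: $y^a$ is integrable on $(0,1)$, the cylinder has volume $\sim R^n$, and the $R^2$ growth is exactly what the logarithmic cutoff in the $x$-directions can absorb in the relevant low dimensions — this is the mechanism that, unlike the half-space fractional case, works uniformly in $a$). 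Letting $R\to\infty$ forces $\nabla\psi_i\equiv 0$, so each $\psi_i$ is constant; hence $v_{x_i}=c_i\,\varphi$ for constants $c_i$, i.e. all partial derivatives in $x$ are proportional to a single function $\varphi$. Choosing $\omega$ to be the unit vector in the direction $(c_1,\dots,c_n)$ (and noting $\partial_{\omega^\perp} v=0$) gives \eqref{SYY} with $v_0(t,y):=v(t\omega,y)$, and restricting to $\{y=0\}$ gives $u(x)=u_0(\omega\cdot x)$.

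Finally, for the last sentence: $u_0'$ is, up to the positive normalizing constant, the trace of $\varphi=\sigma$ on $\{y=0\}$, which is $\ge 0$; and $\sigma|_{y=0}$ satisfies the linearized Neumann problem, so by the strong maximum principle / Hopf lemma for the operator $\mathcal L_a$ it is either strictly positive everywhere or identically zero — that is, $u_0'>0$ or $u_0'\equiv 0$. The main obstacle I anticipate is \emph{not} the abstract scheme but making the boundary-regularity and positivity steps rigorous for the degenerate/singular weight $y^a$ on the slab with a Neumann condition at $y=1$: one needs the right De Giorgi–Nash–Moser and Harnack theory for $A_2$-weights (as in \cite{FKS,FJK}) up to both boundary components, and one must verify that the logarithmic cutoff genuinely kills the right-hand side given only the quadratic-in-$R$ bound \eqref{EB} rather than a better one — equivalently, that the relevant ``dimension'' for this Liouville argument is effectively $n\le 3$, which is precisely the restriction under which \eqref{EB} will later be verified for monotone solutions.
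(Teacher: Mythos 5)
The scheme you describe (stability tested against $\varphi\eta$, Caccioppoli-type inequality, Liouville argument with cutoffs, then concluding $v_{x_i}=c_i\varphi$) is indeed the architecture of the paper's proof. However, there is a genuine gap in how you obtain the positive function $\varphi$ solving the linearized problem, and this is precisely the step on which the entire argument pivots.

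You propose to take $\varphi=\sigma:=|\nabla_x v|$ ``or more carefully a positive combination of the $v_{x_i}$,'' but neither works under the hypotheses of the theorem. The function $|\nabla_x v|$ is \emph{not} a solution of the linearized equation: by Kato-type considerations it is only a subsolution, with equality exactly when all the $v_{x_i}$ are proportional to one another --- which is the conclusion you are trying to prove, not something you may assume. Consequently neither Harnack nor the strong maximum principle applies to $\sigma$ directly, and your claim that $\sigma$ vanishing at a point forces $\nabla_x v\equiv 0$ does not follow (a single zero of a subsolution does not propagate). As for a ``positive combination of the $v_{x_i}$'': a fixed linear combination $\sum c_i v_{x_i}=\partial_\omega v$ \emph{is} a solution of the linearized problem, but there is no reason it should be positive unless $v$ is monotone in the direction $\omega$, and monotonicity is not a hypothesis here --- the theorem assumes only stability plus the energy bound \eqref{EB}. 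The paper's proof supplies exactly the missing ingredient in Lemma~\ref{stability}: it proves that stability is \emph{equivalent} to the existence of a positive solution $\varphi$ of the linearized system~\eqref{stabsis}, and the nontrivial direction (stability~$\Rightarrow$~existence of $\varphi$) is established by a limiting procedure over truncated eigenvalue problems on the cylinders $C_R$, using strict positivity of the first eigenvalue $\lambda_R$, a weighted Harnack inequality to get uniform local bounds, and compactness. Without this lemma your argument does not start. Two smaller points: the logarithmic cutoff is not needed and does not by itself drive the right-hand side to zero under the $R^2$ growth; the paper uses a standard Lipschitz cutoff and the usual two-pass Liouville argument (first show the Dirichlet-type integral is finite, then revisit the estimate to show it vanishes). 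And the theorem carries no dimension restriction --- the bound \eqref{EB} is a hypothesis, not a consequence --- so the remark that the Liouville step works only for $n\le 3$ conflates this theorem with the later ones where \eqref{EB} is verified.
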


From this Theorem, we can directly obtain as a Corollary the one-dimensional symmetry of stable solutions of (\ref{mainsis}), when $n=2$ and for every $a\in(-1,1)$. This extends the result of de la Llave and the third author in \cite{DllV}, in which they consider $n=2$, $a=0$ and $v$ as a monotone solution of (\ref{mainsis}).

	\begin{corollary}
		\label{2D}
		Let $f\in C^{1,\gamma}(\R)$, with $\gamma>\max \{0,-a\}$ and let $n=2$. Assume that $v$ is a bounded stable solution for problem~(\ref{mainsis}).
		Then, there exist $v_0:\R\times(0,1)\to\R$ and $\omega\in S^2$ such that:
		\[
		v(x,y)=v_0(\omega\cdot x,y) \qquad {\mbox{for all }} (x,y)\in\R^3\times(0,1).
		\]
		
		In particular, the trace $u$ of $v$ on $\{y=0\}$ can be written as $u(x)=u_0(\omega\cdot x)$.
	\end{corollary}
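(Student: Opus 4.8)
The plan is to deduce Corollary~\ref{2D} from Theorem~\ref{DllV}. That theorem yields one-dimensional symmetry for any bounded stable solution of~\eqref{mainsis} which additionally satisfies the energy bound~\eqref{EB}; hence, when $n=2$, it suffices to check that~\eqref{EB} holds automatically for every such solution, and then the symmetry statement (together with the dichotomy $u_0'>0$ or $u_0'\equiv 0$) follows verbatim from Theorem~\ref{DllV}.

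First I would establish a uniform gradient estimate in the horizontal variables, namely
\[
\|\nabla_x v\|_{L^\infty(\R^n\times[0,1])}\le C,
\]
with $C$ depending only on $n$, $a$, $\|v\|_{L^\infty}$ and $f$. The idea is to differentiate~\eqref{mainsis} along each tangential direction $x_i$: since $x_i$ is parallel to the degenerate boundary $\{y=0\}$, the function $w:=\partial_{x_i}v$ formally solves a problem of the same type, namely $\mathrm{div}(y^a\nabla w)=0$ in $\R^n\times(0,1)$, $-\lim_{y\to0}y^a w_y=f'(v)\,w$ on $\{y=0\}$ and $w_y(x,1)=0$ on $\{y=1\}$. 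Because $y^a$ is an $A_2$ Muckenhoupt weight and $f\in C^{1,\gamma}$ with $\gamma>\max\{0,-a\}$, the De Giorgi--Nash--Moser theory for degenerate elliptic equations with $A_2$ weights (see~\cite{FKS}), combined with the corresponding boundary estimates and with the translation invariance of the problem in the $x$-variables, gives a bound on $w$ that is uniform over the whole slab, up to and including $\{y=0\}$; in the interior one may alternatively invoke the classical estimates of~\cite{GT}. The role of the regularity assumption $\gamma>\max\{0,-a\}$ is precisely to guarantee that this differentiated problem has sufficiently regular coefficients.

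Granted this bound, the energy estimate~\eqref{EB} is immediate. Indeed, since $a\in(-1,1)$ we have $\int_0^1 y^a\,dy=\tfrac{1}{a+1}<+\infty$, so that
\[
\int_{C_R}y^a\,|\nabla_x v(x,y)|^2\,dx\,dy
\le \|\nabla_x v\|_{L^\infty(\R^n\times[0,1])}^2\,|B_R|\int_0^1 y^a\,dy
=\frac{\|\nabla_x v\|_{L^\infty(\R^n\times[0,1])}^2\,|B_1|}{a+1}\,R^n .
\]
For $n=2$ the right-hand side is bounded by $C R^2$, hence~\eqref{EB} holds for every $R\ge 2$ (in fact for every $R>0$), and Theorem~\ref{DllV} applies and gives the asserted one-dimensional symmetry of $v$ and of its trace $u$.

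The only delicate point is the uniform horizontal-gradient bound up to the degenerate boundary $\{y=0\}$: classical Schauder theory does not apply directly because of the weight $y^a$, so one must rely on the regularity theory for $A_2$-weighted elliptic equations (together with its adaptation to the Neumann-type condition on $\{y=0\}$ and the homogeneous Neumann condition on $\{y=1\}$). Once this ingredient is in hand, the rest of the argument reduces to the observation, specific to the slab geometry, that the weight is integrable in $y$ on $(0,1)$, which makes the quadratic growth in~\eqref{EB} automatic in dimension $n=2$ — in sharp contrast with the fractional problem posed on the half-space, where the analogous estimate is considerably more delicate and $a$-dependent.
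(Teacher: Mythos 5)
Your proposal is correct and follows essentially the same route as the paper: the paper's proof of Corollary~\ref{2D} simply invokes the uniform horizontal gradient bound already established in Proposition~\ref{grad-estimate}, combines it with the integrability of $y^a$ over $(0,1)$ to obtain the quadratic energy growth, and then applies Theorem~\ref{DllV}. The only difference is that you re-derive (in sketch form) the gradient bound instead of citing Proposition~\ref{grad-estimate}; the paper actually obtains it via an incremental-quotient/iteration argument using Proposition~\ref{holder} and Corollary~\ref{semilinear} rather than by differentiating the equation directly, but both lead to the same conclusion.
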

	
	It is an open problem whether the energy estimate~\eqref{EB} holds for stable solutions when $n=3$. In the following two results, we establish it for minimizers and for monotone solutions that, as observed before, are in particular stable solutions.
	
	Next result is an energy estimate for minimizers in any dimension $n$.
	This type of results are essential in order to check energy
	conditions as in~\eqref{EB} and so apply
	Theorem~\ref{DllV}.
	
	\begin{theorem}[\textbf{Energy estimate for minimizers}]
		\label{globmin}
		Let $f\in C^{1,\gamma}(\R)$, with $\gamma>\max \{0,-a\}$, and let $v$ be a bounded minimizer for problem~\eqref{mainsis}. 
		
		Then, we have
		\begin{equation}
		\label{estimate.min}
		\mathcal{E}_R(v) \leq C R^{n-1},
		\end{equation}
		for any $R\ge 2$.
		
	\end{theorem}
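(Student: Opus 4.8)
The plan is to run a direct comparison (competitor) argument, in the spirit of the classical energy estimates of Ambrosio--Cabr\'e for the Allen--Cahn equation, adapted to the slab $\R^n\times(0,1)$ and to the Muckenhoupt weight $y^a$. Write $M:=\|v\|_{L^\infty(\R^n\times(0,1))}$. First I would record a harmless normalization: adding a constant $c$ to $G$ changes $\mathcal{E}_R(v)$ and $\mathcal{E}_R(w)$ by the same amount $c\,|B_R|$ for every admissible competitor $w$, so it affects neither the minimality of $v$ nor (with $c\le 0$) the conclusion; hence we may assume $G\ge 0$ on $[-M,M]$ and $G(m)=0$ for some $m\in[-M,M]$. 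Next I would establish the regularity input that powers the argument. Since $v$ is a bounded solution of~\eqref{mainsis}, the equation $\mathrm{div}(y^a\nabla v)=0$ is translation invariant in $x$, the weight $y^a$ lies in $A_2$, and the conormal datum $f(v)$ is bounded (here $f\in C^{1,\gamma}$ with $\gamma>\max\{0,-a\}$ is what makes the boundary theory apply): by the regularity theory for such degenerate/singular equations (see e.g.~\cite{FKS,CS}) together with standard Schauder estimates away from $\{y=0\}$ and at $\{y=1\}$ (see~\cite{GT}), $\nabla_x v$ is bounded on $\R^n\times(0,1)$, the flux $y^a v_y$ is bounded near $\{y=0\}$, and $v_y$ is bounded near $\{y=1\}$ (where $v_y(\cdot,1)=0$). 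Since $a<1$, the function $y\mapsto y^{-a}$ is integrable on $(0,1)$, so $y^a v_y^2=(y^a v_y)^2\,y^{-a}$ is integrable in $y$ near $0$, uniformly in $x$; hence
\[
C_0\;:=\;\sup_{x\in\R^n}\int_0^1 y^a\,|\nabla v(x,y)|^2\,dy\;<\;+\infty .
\]

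Then comes the competitor. Fix $R\ge 2$ and choose $\eta\in C^\infty_c(\R^n)$ with $0\le\eta\le1$, $\eta\equiv1$ on $B_{R-1}$, $\eta\equiv0$ on $\R^n\setminus B_R$, and $|\nabla\eta|\le 2$, and set
\[
w(x,y)\;:=\;\eta(x)\,m+\bigl(1-\eta(x)\bigr)\,v(x,y).
\]
Then $w$ is bounded, $w\in C^1(\R^n\times(0,1))$, and $w\equiv v$ on $\partial B_R\times(0,1)$, so $w$ is an admissible competitor; moreover $w\equiv m$ and $\nabla w\equiv 0$ on $C_{R-1}$, while on $C_R\setminus C_{R-1}$ one has $|\nabla w|\le|\nabla v|+2|m-v|\le|\nabla v|+4M$. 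Therefore
\[
\int_{C_R}y^a|\nabla w|^2\,dx\,dy
=\int_{C_R\setminus C_{R-1}}y^a|\nabla w|^2\,dx\,dy
\le 2\int_{C_R\setminus C_{R-1}}y^a|\nabla v|^2\,dx\,dy+32M^2\int_{C_R\setminus C_{R-1}}y^a\,dx\,dy .
\]
By Fubini and the bound $C_0$, the first term on the right is $\le 2C_0\,|B_R\setminus B_{R-1}|\le CR^{n-1}$; the second equals $\tfrac{1}{a+1}\,|B_R\setminus B_{R-1}|\le CR^{n-1}$. For the potential term, $w(x,0)=m$ on $B_{R-1}$ (so $G(w)=0$ there), while $w(x,0)\in[-M,M]$ on $B_R\setminus B_{R-1}$, hence $0\le G(w(x,0))\le\max_{[-M,M]}G$, giving $\int_{B_R\times\{0\}}G(w)\,dx\le CR^{n-1}$. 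Combining, $\mathcal{E}_R(w)\le CR^{n-1}$, and the minimality of $v$ yields $\mathcal{E}_R(v)\le\mathcal{E}_R(w)\le CR^{n-1}$, which is~\eqref{estimate.min}.

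The only genuinely delicate point is the regularity step, i.e.\ the uniform-in-$x$ bound $C_0<+\infty$: it rests on interior and boundary estimates for the degenerate Neumann problem with $A_2$ weight — in particular on controlling the conormal flux $y^a v_y$ up to $\{y=0\}$, which is where the assumption $\gamma>\max\{0,-a\}$ is used — combined with the fact that $y^{-a}$ is integrable on $(0,1)$, a feature special to the bounded slab (in contrast with the Caffarelli--Silvestre half-space) that was already highlighted in the Introduction. Everything else is the elementary cut-off computation above, and it is precisely this $a$-independent structure that lets the same competitor work for all $a\in(-1,1)$.
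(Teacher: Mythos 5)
Your proof is correct and follows essentially the same comparison argument as the paper: the same constant-in-$C_{R-1}$ competitor $w=\eta\,m+(1-\eta)v$, the same normalization of the potential, and the same reliance on the gradient bounds $\|\nabla_x v\|_{L^\infty}\le C$ and $\|y^a\partial_y v\|_{L^\infty}\le C$ (the paper's Proposition~\ref{grad-estimate}) together with the integrability of $y^a$ and $y^{-a}$ on $(0,1)$ to bound the annular Dirichlet energy by $CR^{n-1}$. The only cosmetic difference is that you package the flux bound into the uniform-in-$x$ quantity $C_0=\sup_x\int_0^1 y^a|\nabla v|^2\,dy$, while the paper writes the $\int_0^1 y^a\,dy+\int_0^1 y^{-a}\,dy$ factor explicitly; the content is identical.
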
	
	
	When $n=3$ we can prove the same estimate for bounded solutions whose traces on $\{y=0\}$ are monotone in some direction.
	
	\begin{theorem}[\textbf{Energy estimate for monotone solutions for $n=3$}]
		\label{thmono}
		Let $f\in C^{1,\gamma}(\R)$, with $\gamma>\max \{0,-a\}$, and let $v$ be a bounded solution of~(\ref{mainsis}) with $n=3$ such that its trace $u(x)=v(x, 0)$ is monotone in some direction. 
		
		Then, we have
		\begin{equation}
		\label{estimate.mono}
		\mathcal{E}_R(v) \leq C R^2,
		\end{equation}
		for any $R\ge 2$.
	\end{theorem}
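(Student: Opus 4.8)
The plan is to deduce~\eqref{estimate.mono} from the energy estimate for minimizers, Theorem~\ref{globmin}, by proving that a bounded monotone solution $v$ of~\eqref{mainsis} in $\R^3$ is in fact a minimizer in the sense of Definition~\ref{minimizer}; once this is shown, Theorem~\ref{globmin} with $n=3$ gives at once $\mathcal E_R(v)\le CR^{3-1}=CR^2$. After a rotation we may assume $v$ is nondecreasing in $x_3$. Applying the strong maximum principle and a Hopf-type lemma for the $A_2$-weighted equation to $\partial_{x_3}v$, which solves the linearized problem (the same degenerate equation inside, the linearized Robin condition on $\{y=0\}$, the homogeneous Neumann condition on $\{y=1\}$), one finds that either $\partial_{x_3}v\equiv0$ or $\partial_{x_3}v>0$ everywhere. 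In the first case $v=v(x_1,x_2,y)$ is a bounded stable solution of the two-dimensional problem, hence one-dimensional by Corollary~\ref{2D} and monotone (or constant) by the last assertion of Theorem~\ref{DllV}; Theorem~\ref{globmin} in dimension $1$ then bounds its one-dimensional energy uniformly in $R$, and slicing $B_R\subset\R^3$ in the two directions on which $v$ does not depend yields $\mathcal E_R(v)\le CR^2$ directly. So from now on assume $\partial_{x_3}v>0$.

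Next I would pass to the limit profiles $\bar v(x_1,x_2,y):=\lim_{x_3\to+\infty}v$ and $\underline v(x_1,x_2,y):=\lim_{x_3\to-\infty}v$, which exist by monotonicity and boundedness and, by uniform interior estimates for the weighted equation together with $f\in C^{1,\gamma}$, converge in $C^{2}_{\mathrm{loc}}$; thus $\bar v,\underline v$ are bounded solutions of~\eqref{mainsis} on $\R^2\times(0,1)$ independent of $x_3$. Since a monotone solution is stable (Remark~\ref{remark.mono}) and stability is preserved under locally uniform limits, $\bar v$ and $\underline v$ are stable solutions of the two-dimensional problem, hence one-dimensional by Corollary~\ref{2D} and monotone (or constant) by Theorem~\ref{DllV}. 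This one-dimensional structure of the profiles is the only place where the hypothesis $n=3$ is genuinely used, since the profiles of a solution in $\R^n$ live in $\R^{n-1}$, which coincides with $\R^2$ exactly when $n=3$; from it, the standard one-dimensional theory together with Theorem~\ref{globmin} in dimension $1$ provides a uniform-in-$R$ bound for the energy of the profiles, whence, by slicing, a bound of order $R$ for their energy over $B_R\times(0,1)$ with $B_R\subset\R^2$.

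The crucial and technically hardest step is then to show that $v$ itself is a minimizer of~\eqref{mainsis} on $\R^3\times(0,1)$, which I would carry out by a sliding/comparison argument in the spirit of~\cite{AAC}. Given $R>0$ and a bounded competitor $w$ with $w\equiv v$ on $\partial B_R\times(0,1)$, one first replaces $w$ by $\min\{\max\{w,\underline v\},\bar v\}$; this does not increase $\mathcal E_R$, by the usual truncation lemma (using that $\bar v,\underline v$ are solutions and that $\underline v\le v=w\le\bar v$ on $\partial B_R\times(0,1)$), so we may assume $\underline v\le w\le\bar v$. One then uses that $\partial_{x_3}v>0$ makes the translates $v_t(x,y):=v(x_1,x_2,x_3+t,y)$ a monotone family whose graphs foliate the region enclosed between the graphs of $\underline v$ and $\bar v$, caps the competitor off in the $x_3$-variable at levels $\pm T$ by $\bar v$ and $\underline v$ --- absorbing the contribution of $\{|x_3|\ge T\}$ into the finite energy of these one-dimensional profiles --- and runs the sliding/calibration scheme to obtain $\mathcal E_R(v)\le\mathcal E_R(w)$ in the limit $T\to\infty$. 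The main obstacle is precisely this adaptation: carrying the truncation and the sliding/calibration argument through for the mixed bulk-plus-boundary functional $\mathcal E_R$ with the degenerate or singular weight $y^a$, and controlling the far-field contribution by means of the one-dimensional profiles (the step that really uses Corollary~\ref{2D}, hence $n=3$). Once $v$ is known to be a minimizer, Theorem~\ref{globmin} with $n=3$ gives $\mathcal E_R(v)\le CR^2$ for all $R\ge2$, which is~\eqref{estimate.mono}.
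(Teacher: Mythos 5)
Your high-level architecture is the right one (pass to the limit profiles $\overline v,\underline v$, show they are one-dimensional via Corollary~\ref{2D}/Theorem~\ref{DllV}, use this to argue that $v$ minimizes the energy against the competitor of Theorem~\ref{globmin}), but there is a genuine gap at the crucial step where you upgrade from ``minimality in a restricted class'' to full minimality in the sense of Definition~\ref{minimizer}.

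You propose to handle an \emph{arbitrary} competitor $w$ by truncating it to $\min\{\max\{w,\underline v\},\overline v\}$ and claim ``by the usual truncation lemma'' that this does not increase $\mathcal E_R$. No such lemma is available at this level of generality: the standard two-function identity
$\mathcal E_R(\max(w,\psi))+\mathcal E_R(\min(w,\psi))\le\mathcal E_R(w)+\mathcal E_R(\psi)$
(which \emph{is} true here, since the potential term rearranges pointwise and the weighted Dirichlet term is submodular) gives $\mathcal E_R(\max(w,\underline v))\le\mathcal E_R(w)$ \emph{only if} $\underline v$ is already known to minimize $\mathcal E_R$ among functions with $\underline v$'s own boundary data on $\partial B_R\times(0,1)$. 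But $\underline v$ and $\overline v$ are merely stable, one-dimensional solutions of the two-dimensional problem; minimality of $\underline v$ in that sense is not established in the paper and is not a consequence of what you have at that point. The paper sidesteps this entirely: Lemma~\ref{minim} proves only that $v$ minimizes within the class $S_R=\{w:\underline v\le w\le\overline v,\ w=v\ \mbox{on}\ \partial B_R\times(0,1)\}$, by a sliding argument for uniqueness plus an existence result for constrained minimizers (Lemma~4.1 of~\cite{CY2}); no claim of unconstrained minimality is made or needed.

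The piece you then miss is the one that makes restricted minimality usable: you must check that the specific competitor $\overline w=\eta_R\tau+(1-\eta_R)v$ of Theorem~\ref{globmin} actually lies in $S_R$, i.e.\ that $\underline v\le\overline w\le\overline v$. This forces a careful choice of the constant $\tau$, which the paper obtains from the characterization of the potential: Lemma~\ref{barv} and Lemma~\ref{aboutG} feed into Corollary~\ref{corG}, showing $G$ has a double-well shape on the two relevant intervals, whence one can pick $\tau\in[\sup\underline u,\inf\overline u]$ with $G(\tau)=c_u$ (and, in the degenerate case $\inf\underline u=\inf\overline u$, one must switch to a different competitor built from $\underline v$ itself). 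Your write-up never discusses the choice of $\tau$, because your plan --- were the truncation lemma true --- would make it unnecessary; since the truncation step is not justified, the potential characterization and the verification $\overline w\in S_R$ are exactly the missing ingredients. In short: replace the appeal to an unproved truncation lemma with (i) restricted minimality as in Lemma~\ref{minim}, and (ii) the potential characterization Corollary~\ref{corG} to place the competitor of Theorem~\ref{globmin} inside the restricted class.
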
	
	
	{As mentioned in Subsection \ref{DG}, it is worth to stress
	a crucial difference between these energy estimates and the ones for the fractional Laplacian obtained in \cite{CC1, CC2}. While in our case we can control the energy with a term that does not depend on the exponent $a$ of the weight, for the fractional Laplace problem this is not true when $s$ 
	is small and belongs to the strongly nonlocal range of exponents:
	in particular, the sharp energy estimates proved in \cite{CC2} for $s<\frac{1}{2}$ are not enough to obtain one-dimensional symmetry of special solutions via a Liouville type argument.}
	
	As a consequence of Theorems \ref{DllV}, \ref{globmin}, and \ref{thmono} we deduce the
	following result, which can be seen as the main result of this paper and provides
	the one-dimensional symmetry for minimizers and monotone solutions
	of a three-dimensional water wave problem. 
	
	\begin{theorem}
		\label{monotone}
		Let $f\in C^{1,\gamma}(\R)$, with $\gamma>\max \{0,-a\}$ and let $n=3$. Assume that one of the two following condition is satisfied:
		\begin{itemize}
		\item $v$ is a bounded minimizer for problem \eqref{mainsis};
		\item $v$ is a bounded solution of ~(\ref{mainsis}) such that its trace $u(x)=v(x,0)$ is monotone in some direction.
		\end{itemize}
		Then, there exist $v_0:\R\times(0,1)\to\R$ and $\omega\in S^2$ such that:
		\[
		v(x,y)=v_0(\omega\cdot x,y) \qquad {\mbox{for all }} (x,y)\in\R^3\times(0,1).
		\]
		
		In particular, the trace $u$ of $v$ on $\{y=0\}$ can be written as $u(x)=u_0(\omega\cdot x)$.
	\end{theorem}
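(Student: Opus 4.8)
The plan is to deduce this statement by assembling the three results already in hand: the one-dimensional symmetry criterion of Theorem~\ref{DllV} and the two energy estimates of Theorems~\ref{globmin} and~\ref{thmono}. Since all the substantial work is contained in those theorems, the proof here amounts to verifying that, under either hypothesis, $v$ is a bounded stable solution of~\eqref{mainsis} satisfying the energy bound~\eqref{EB} with $n=3$, and then quoting Theorem~\ref{DllV}.

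The first step is to check stability. If $v$ is a bounded minimizer of~\eqref{mainsis}, then $v$ is stable by the observation made right after Definition~\ref{STAB}. If instead the trace $u(x)=v(x,0)$ is monotone in some direction $e\in\R^n$, then $\partial_e v$ is a nonnegative solution of the problem obtained by linearizing~\eqref{mainsis} (nonnegativity on $\{y=0\}$ propagates to the interior by the maximum principle for the degenerate operator $\mathrm{div}(y^a\nabla\,\cdot\,)$ together with the Neumann condition on $\{y=1\}$), so that either $\partial_e v\equiv 0$ or $\partial_e v>0$ in $\R^n\times(0,1)$; in both cases $v$ is stable, as recorded in Remark~\ref{remark.mono}. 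I would also note here that, since only $f=-G'$ enters the equation and since replacing $G$ by $G+\lambda$ changes $\mathcal{E}_R(w)$ by the fixed constant $\lambda\lvert B_R\rvert$ for every competitor $w$ — hence alters neither~\eqref{mainsis} nor the minimality property — it is not restrictive to assume $G\ge 0$.

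The second step is the energy bound. With $n=3$: if $v$ is a bounded minimizer, Theorem~\ref{globmin} gives $\mathcal{E}_R(v)\le CR^{n-1}=CR^2$ for $R\ge 2$; if the trace of $v$ is monotone, Theorem~\ref{thmono} gives $\mathcal{E}_R(v)\le CR^2$ for $R\ge 2$ directly. In either case, from $G\ge 0$ and the elementary pointwise inequality $\lvert\nabla_x v\rvert\le\lvert\nabla v\rvert$ we obtain
\[
\int_{C_R} y^a\,\lvert\nabla_x v(x,y)\rvert^2\,dx\,dy\;\le\;\int_{C_R} y^a\,\lvert\nabla v(x,y)\rvert^2\,dx\,dy\;\le\;\mathcal{E}_R(v)\;\le\;CR^2
\]
for every $R\ge 2$, which is exactly condition~\eqref{EB} in dimension $n=3$.

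The final step is to apply Theorem~\ref{DllV}: $v$ is a bounded stable solution of~\eqref{mainsis} satisfying~\eqref{EB}, hence there exist $v_0:\R\times(0,1)\to\R$ and $\omega\in S^{n-1}=S^2$ with $v(x,y)=v_0(\omega\cdot x,y)$, and restricting to $\{y=0\}$ yields $u(x)=u_0(\omega\cdot x)$ (moreover $u_0'>0$ in the monotone case). As for difficulties: there is essentially no obstacle in the present deduction, which is pure bookkeeping; the genuine work is internal to Theorems~\ref{globmin} and~\ref{thmono}, and among these the more delicate is Theorem~\ref{thmono}, where the sharp exponent $R^2$ in dimension three must be extracted for merely monotone solutions — which are not assumed to minimize — so that the comparison argument has to be run against a carefully chosen competitor built from the monotonicity rather than from the minimality.
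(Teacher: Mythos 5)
Your proposal is correct and follows the same deductive path as the paper's own (very short) proof: invoke stability of minimizers and of monotone solutions, apply Theorems~\ref{globmin} and~\ref{thmono} to obtain $\mathcal{E}_R(v)\le CR^2$ for $n=3$, normalize $G\ge0$ as the paper does in Section~\ref{sect.min} so that the full energy dominates $\int_{C_R}y^a|\nabla_x v|^2$, and then quote Theorem~\ref{DllV}. Your write-up is somewhat more explicit than the paper's (which treats the stability check and the passage from an $\mathcal{E}_R$-bound to condition~\eqref{EB} as understood), but the substance is identical.
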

	{When~$n=2$ and~$a=0$, the analogue of
	Theorem~\ref{monotone} was established in~\cite{DllV}:
	the improvement
	in our case comes from the enhanced energy estimates
	in Theorem~\ref{thmono}.
	We stress once again that the result in Theorem~\ref{monotone}
	holds true for all $a\in(-1,1)$
	and, as we are going to see, we can perform a unified
proof for all $a\in(-1,1)$, without having to distinguish different regimes.
The fact that the results and the methods are common for all~$a\in(-1,1)$
is indeed a special feature for our problem, and it is related to the fact that
the equation in~\eqref{mainsis} is set in a slab (differently, for instance,
from the cases in~\cite{CC1} and~\cite{CC2}, in which the
energy behavior of minimal solutions is completely different in dependence of~$a$). }

	\subsection{Technical comments and strategy of the proofs}
	It is interesting to point out that the results of this paper
	are new not only in the three-dimensional case, but also in the two-dimensional
	case when~$a\ne0$.
As mentioned above, in the two-dimensional case studied in~\cite{DllV} the energy estimate~\eqref{EB} follows easily by standard elliptic estimates which ensure that the gradient of any bounded solution to~\eqref{mainsis} is bounded. Of course, just using an $L^\infty$ bound on the gradient of the solution, would imply that the energy in cylinders $C_R$ grows like $R^n$, which, for $n=3$ would not be enough to apply Theorem~\ref{DllV}.
	
	Our first energy estimate for minimizers (Theorem~\ref{globmin}) is obtained via a comparison argument, similar to the one used in~\cite{AAC}, based on the construction of a competitor which is constant in the smaller cylinder $C_{R-1}$.
	
	The proof of the energy estimate for monotone solutions is, instead, more involved and it follows the strategy of~\cite{CC1,CC2}, in which a similar estimate is proved for the fractional Laplacian.

	Observe that in Theorem~\ref{thmono} we restrict the statement to the case $n=3$. This is due to the fact that, after taking the limit at $\pm \infty$ in the direction of monotonicity of the solution, we reduce our problem to the classification of stable solutions in one dimension less. Such a classification (more precisely the one-dimensional symmetry and the monotonicity of the limit functions) is known only in dimension  $2$.
	
	We point out that several important differences arise comparing the
	settings in~\cite{DllV} and in~\cite{CC1,CC2}
	with the one considered in this paper. In particular:
	\begin{itemize}
	\item In~\cite{DllV}, only the two-dimensional case is taken into account
	(and only the nonsingular and nondegenerate case~$a:=0$). The lower
	dimensionality assumption is important in~\cite{DllV} since it gives for free
	the appropriated bounds on the energy growth;
	\item In~\cite{CC1,CC2}, the case of purely fractional operators are taken into
	account, while the operators treated here are nonlocal, but nonfractional
	as well, and these special
	features require here, among the other technical bounds,
	new energy estimates and a new set of regularity results, that are tailored
	for the case under consideration. On the other hand, as a byproduct of the sharp
	energy estimates that we find, we are able to obtain symmetry results for all values
	of~$a\in(-1,1)$ (while the energy estimates in~\cite{CC1,CC2}
	cannot be applied beyond the range~$(-1,0]$, thus reflecting the important difference
	between the water wave problem studied here and the fractional Laplace problem
	in~\cite{CC1,CC2}).
	\end{itemize}

	\subsection{Organization of the paper}
	The paper is organized as follows:
	\begin{itemize}
\item In Section 2 we collect some preliminary results on regularity and gradient estimates for solutions to~\eqref{mainsis};
		\item In Section 3 we give the proof of Theorem~\ref{DllV}, which is based on two preliminary results: a characterization of stability (Lemma~\ref{stability}) and a Liouville type theorem (Lemma~\ref{Liouville}). We also deduce directly Corollary~\ref{2D}; 
		\item In Section 4 we prove the energy estimate for minimizers (Theorem~\ref{globmin});
		\item In Section 5 we prove the energy estimate for monotone solutions (Theorem~\ref{thmono}) which needs several ingredients (mainly Lemma~\ref{barv} and Lemma~\ref{minim}).
	\end{itemize}

\section{Regularity results and gradient bounds for solutions to~\eqref{mainsis}}\label{sec.regularity}

In this section we collect some regularity results and gradient estimates for solutions to problem~\eqref{mainsis}.

We start by observing that the weight $y^a$, with $a\in (-1,1)$ belongs to the so-called
Mouckenhoupt class~$A_2$ and hence the theory developed by Fabes, Jerison, Kenig,  and Serapioni~\cite{FKS,FJK} applies to the operator $\mathrm{div}(y^a \nabla)$. 

More precisely in~\cite{FKS,FJK}  a Poincar\'e inequality, a
Harnack inequality, and the 
H\"older regularity for weak solutions of $\mathrm{div}(y^a \nabla)=0$ are established . This theory gives interior regularity for solutions of our problem~\eqref{mainsis}. In the sequel we will need regularity up to the boundary $\{y=0\}\cup \{y=1\}$ and some global $L^\infty$ estimates for the derivatives of solutions to~\eqref{mainsis}. For these results some care is needed, due to the presence of the weight $y^a$.

We define the weighted Sobolev spaces (recall~\eqref{CRCR})
$$L^2(C_R, y^a):=\{v:C_R\rightarrow \R\,|\,y^a v^2 \in L^1(C_R)\}.$$
$$H^1(C_R,y^a):=\{v:C_R\rightarrow \R\,|\,y^a(v^2 + |\nabla v|^2)\in L^1(C_R)\}.$$

In the sequel we will consider the following localized (in the $x$-variable) linear problem
\begin{equation}\label{local}
\begin{cases}
\mathrm{div}(y^a\nabla v)=0 \qquad & \mbox{in}\;\; C_{R}\\
\partial_y v =0 \qquad & \mbox{on}\;\; B_{R}\times \{y=1\}\\
-y^a\partial_y v =g \qquad & \mbox{on}\;\; B_{R}\times \{y=0\}.
\end{cases}
\end{equation}

We start by giving the definition of weak solution for~\eqref{local}.

\begin{definition}
Let $R>0$,  and let $g\in L^1(B_R)$. We say that a function $v\in H^1(C_R,y^a)$ is a \textit{weak solution} of problem~\eqref{local} if
\[\int_{C_R} y^a \nabla v \cdot \nabla \xi\,dx\,dy -\int_{B_R\times \{y=0\}} g\,\xi\,dx =0,\]
for every $\xi \in C_0^\infty (B_R\times [0,1])$.

\end{definition}

Later on, we will need the following duality principle which is the analogue, for our problem, of Proposition 3.6 in~\cite{CY1} (see also~\cite{CS}).

\begin{lemma}\label{duality}
Let $g\in C({\R^n})$, $v\in C^2(\R^{n}\times(0,1))$, and $y^a\partial_y v\in C(\R^{n}\times[0,1])$.  If $v$ is a classical solution of 
\[ \begin{cases}
\mathrm{div}(y^a \nabla v)=0 \qquad & \mbox{in}\,\,\R^{n}\times(0,1)\\
\partial_y v =0 \qquad & \mbox{on}\,\,\R^{n}\times \{y=1\}\\
-y^a\partial_y v = g \qquad & \mbox{on}\,\,\R^{n}\times \{y=0\},
\end{cases}
\]
then the function $w=-y^a\partial_y v$ is a classical solution of the Dirichlet problem
\[
\begin{cases}
\mathrm{div}(y^{-a} \nabla w)=0 \qquad & \mbox{in}\,\,\R^{n}\times(0,1)\\
w =0 \qquad & \mbox{on}\,\,\R^{n}\times \{y=1\}\\
w = g \qquad & \mbox{on}\,\,\R^{n}\times \{y=0\}.
\end{cases}
\]
\end{lemma}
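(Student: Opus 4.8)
The plan is to verify directly that $w := -y^a \partial_y v$ satisfies each of the three conditions in the claimed Dirichlet problem, computing in the classical (pointwise) sense using the regularity hypotheses on $v$. The boundary conditions are essentially immediate from the definition of $w$: on $\{y=0\}$ we have $w = -y^a\partial_y v = g$ by the third equation for $v$, using that $y^a\partial_y v \in C(\R^n\times[0,1])$ so the trace makes sense; and on $\{y=1\}$ the weight $y^a = 1$ is smooth and positive, so $\partial_y v|_{y=1} = 0$ gives $w|_{y=1} = 0$. The substance of the lemma is thus the interior equation $\mathrm{div}(y^{-a}\nabla w) = 0$ in $\R^n\times(0,1)$.

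First I would treat the $x$-derivatives. For $i = 1,\dots,n$ one has $\partial_{x_i} w = -y^a \partial_{x_i}\partial_y v = -y^a \partial_y(\partial_{x_i} v)$, so $y^{-a}\partial_{x_i} w = -\partial_y(\partial_{x_i} v)$ and hence $\sum_i \partial_{x_i}(y^{-a}\partial_{x_i} w) = -\partial_y(\Delta_x v)$. For the $y$-derivative, note $y^{-a}\partial_y w = y^{-a}\partial_y(-y^a\partial_y v) = -(a y^{-1}\partial_y v + \partial_{yy} v)$, and therefore $\partial_y(y^{-a}\partial_y w) = -\partial_y\big(a y^{-1}\partial_y v + \partial_{yy}v\big)$. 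Adding the two contributions, $\mathrm{div}(y^{-a}\nabla w) = -\partial_y\big(\Delta_x v + \partial_{yy} v + a y^{-1}\partial_y v\big) = -\partial_y\big(y^{-a}\,\mathrm{div}(y^a\nabla v)\big) = 0$, where in the middle step I used the expansion $\mathrm{div}(y^a\nabla v) = y^a(\Delta_x v + \partial_{yy}v + a y^{-1}\partial_y v)$ and in the last step the interior equation for $v$. All manipulations are legitimate in $\R^n\times(0,1)$, where $y > 0$ and $v \in C^2$, so no boundary subtleties intervene for the interior identity.

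The main technical point to be careful about is the regularity needed to justify differentiating $w$ twice and interchanging the order of derivatives: a priori $w$ is only built from second derivatives of $v$, so one formally needs $v \in C^3$ in the interior for the computation above to be pointwise valid. The hypothesis as stated says $v \in C^2(\R^n\times(0,1))$, so the cleanest route is to observe that interior regularity for the degenerate-elliptic equation $\mathrm{div}(y^a\nabla v) = 0$ (the Fabes--Jerison--Kenig--Serapioni theory recalled at the start of Section~\ref{sec.regularity}, bootstrapped via difference quotients in $x$ and via the explicit one-dimensional structure in $y$ on compact subsets of $(0,1)$) upgrades $v$ to $C^\infty$ away from $\{y=0\}\cup\{y=1\}$; alternatively, one interprets the interior equation for $w$ in the weak sense, which only requires the distributional identity and is insensitive to this issue. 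Either way, the computation is robust, and I expect this bookkeeping about interior smoothness — rather than any genuine analytic difficulty — to be the only obstacle.
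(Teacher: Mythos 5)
Your computation is correct and follows precisely the route the paper has in mind: the paper does not write out a proof but declares the lemma ``follows by a simple computation'' with a pointer to Caffarelli--Silvestre, and that simple computation is exactly the one you carry out. Your identity
\[
\mathrm{div}(y^{-a}\nabla w) \;=\; -\,\partial_y\!\bigl(\Delta_x v + \partial_{yy} v + a\, y^{-1}\partial_y v\bigr)
\;=\; -\,\partial_y\!\bigl(y^{-a}\,\mathrm{div}(y^a\nabla v)\bigr)\;=\;0
\]
is the whole content, and your treatment of the two boundary conditions is the right one (on $\{y=1\}$ the weight equals $1$; on $\{y=0\}$ the hypothesis $y^a\partial_y v\in C(\R^n\times[0,1])$ is exactly what makes the trace statement meaningful). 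One small simplification to your regularity remark: away from $\{y=0\}\cup\{y=1\}$ the weight $y^a$ is smooth and bounded away from $0$ and $\infty$, so the equation for $v$ is a uniformly elliptic equation with smooth coefficients on any compact subset of $\R^n\times(0,1)$; interior Schauder theory then gives $v\in C^\infty$ there directly, without invoking the Fabes--Jerison--Kenig--Serapioni machinery or difference-quotient bootstrapping. This is enough to make the third-order derivatives of $v$ you implicitly use, and the interchange of $\partial_y$ with $\Delta_x$, fully classical in the interior.
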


The result
in Lemma~\ref{duality} follows by a simple computation and we refer to~\cite{CS} for its proof.

We can now give a regularity result for the localized linear problem~\eqref{local}.

\begin{proposition}\label{holder}
Let $g\in L^\infty(B_R)$  and let $v$ be a bounded weak solution of ~\eqref{local}.

Then, there exists $\beta\in (0,1)$ (depending only on $n$ and $a$) such that $v\in C^\beta(C_{R/2})$ with the following estimate
$$\|v\|_{C^\beta(\overline{C_{R/2}})}\leq c^1_R,$$

for some $c^1_R$ depending on $n$, $a$, $R$, $\|g\|_{L^\infty(B_R)}$, $\|v\|_{L^\infty(C_R)}$. 

Moreover, we have
\begin{equation}\label{local-v_y}
\|y^a\partial_yv\|_{L^{\infty}(\overline{C_{R/2}})}\leq c^2_R,
\end{equation}
for some $c^2_R$ depending on the same quantities as above. 
\end{proposition}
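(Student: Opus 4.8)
The plan is to derive the estimates in three overlapping regions---a neighbourhood of the top face $\{y=1\}$, the genuine interior $\{1/4<y<3/4\}$, and a neighbourhood of the bottom face $\{y=0\}$---and to glue them at the end; the only real difficulty lies at $\{y=0\}$, where the weight $y^a$ is singular ($a>0$) or degenerate ($a<0$). In the interior and near the top face the weight is bounded between two positive constants and is Lipschitz, so $\mathrm{div}(y^a\nabla v)=0$ is a uniformly elliptic equation in divergence form; extending $v$ by even reflection across $\{y=1\}$ (legitimate since the Neumann datum there is homogeneous) produces a weak solution of a uniformly elliptic equation across $\{y=1\}$, and classical De Giorgi--Nash--Moser and Schauder theory give that $v$ is $C^\beta$---indeed smoother---up to $\{y=1\}$, that $y^a\partial_y v$ is continuous and vanishes there, with all constants depending only on $n$, $a$, $R$ and $\|v\|_{L^\infty(C_R)}$.

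Near $\{y=0\}$ I would first record a Caccioppoli inequality: inserting $\eta^2(v-c)$ into the weak formulation of~\eqref{local}, with $\eta$ a cutoff equal to $1$ on $B_{R/2}$ and supported in $B_{3R/4}$ and $c$ a constant with $|v-c|\le 2\|v\|_{L^\infty(C_R)}$, and estimating the boundary term by $\|g\|_{L^\infty(B_R)}\|v\|_{L^\infty(C_R)}|B_R|$, gives
\[
\int_{C_{3R/4}}y^a|\nabla v|^2\,dx\,dy\le C\bigl(\|v\|_{L^\infty(C_R)}^2+\|g\|_{L^\infty(B_R)}\|v\|_{L^\infty(C_R)}\bigr)R^{n}.
\]
Then I would apply the duality Lemma~\ref{duality}---valid in $\{y>0\}$ by interior regularity and extended across $\{y=0\}$ through the conormal boundary condition---to get that $w:=-y^a\partial_y v$ is a weak solution of $\mathrm{div}(y^{-a}\nabla w)=0$ in $C_{3R/4}$ with $w=g$ on the bottom face and $w=0$ on the top face. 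Since $-a\in(-1,1)$, the weight $y^{-a}$ again belongs to the Muckenhoupt class $A_2$, and $\|w\|_{L^2(C_{3R/4},\,y^{-a})}^2=\int_{C_{3R/4}}y^a|\partial_y v|^2$ is controlled by the right-hand side above. The Moser-type local boundedness estimate of~\cite{FKS,FJK}---applied up to $\{y=0\}$, where the bounded Dirichlet datum $g$ keeps the relevant boundary term under control, up to $\{y=1\}$, where the equation is uniformly elliptic and $w=0$, and uniformly up to the lateral boundary $\partial B_{R/2}\times(0,1)$, where no data is prescribed but the equation is uniformly elliptic in the $x$-directions because the weight depends on $y$ only---then yields $\|w\|_{L^\infty(C_{R/2})}\le c_R^2$, which is precisely~\eqref{local-v_y}.

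To upgrade this to the Hölder bound on $v$, note that~\eqref{local-v_y} gives $|\partial_y v(x,y)|\le c_R^2\, y^{-a}$; since $a<1$, the right-hand side is integrable near $y=0$, so integration in $y$ yields $|v(x,y)-v(x,0)|\le C\,y^{1-a}$ for small $y$, i.e. Hölder continuity in the normal direction. For the tangential directions I would split $v=h+\psi$ on $C_{3R/4}$, with $h$ solving the same equation, with the same lateral trace of $v$ and the same homogeneous top datum, but with \emph{zero} conormal datum on $\{y=0\}$: even reflection of $h$ across $\{y=0\}$ gives a weak solution of $\mathrm{div}(|y|^a\nabla h)=0$ with $|y|^a\in A_2(\R^{n+1})$, so~\cite{FKS,FJK} provides a $C^\beta$ estimate for $h$ up to $\{y=0\}$; the remainder $\psi$ carries the bounded conormal datum $g$, has vanishing lateral trace, satisfies $\|\psi\|_{L^\infty}\le 2\|v\|_{L^\infty(C_R)}$ and, by the previous step, $|\partial_y\psi|\le C\,y^{-a}$, and one controls its oscillation near $\{y=0\}$ by combining these bounds with the weighted Green representation for the hyperplane-supported source (again using $a<1$ for integrability). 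A routine covering and iteration argument then assembles the three regional estimates into $\|v\|_{C^\beta(\overline{C_{R/2}})}\le c_R^1$ with the asserted dependence of the constant.

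The step I expect to be the main obstacle is precisely the boundary regularity at $\{y=0\}$ with a nontrivial conormal datum: even reflection fails there because it creates a singular measure supported on $\{y=0\}$, which forces one to pass to the dual Dirichlet problem for $w=-y^a\partial_y v$ and to separate off the contribution of $g$ by hand; obtaining the estimates uniformly up to the lateral boundary $\partial B_{R/2}\times(0,1)$, where no data is prescribed, is a further technical point that hinges on the weight depending on $y$ only, so that the equation is uniformly elliptic in the $x$-directions.
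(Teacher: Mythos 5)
Your overall strategy partially coincides with the paper's: away from $\{y=0\}$ the equation is uniformly elliptic and even reflection across $\{y=1\}$ works as you describe, and near $\{y=0\}$ the paper, like you, invokes the duality Lemma~\ref{duality} to convert the conormal problem for $v$ into a Dirichlet problem for $w=-y^a\partial_y v$ with the weight $y^{-a}\in A_2$. The Caccioppoli inequality you record is also a correct and natural first step. However, there are two genuine gaps in the way you handle the \emph{inhomogeneous} boundary data on $\{y=0\}$, and these are precisely the points the paper addresses by a different device.

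First, for the $L^\infty$ bound on $w$: you assert that the Moser/De Giorgi local boundedness estimate of~\cite{FKS,FJK}, ``applied up to $\{y=0\}$, where the bounded Dirichlet datum $g$ keeps the relevant boundary term under control,'' yields the conclusion. But the FKS theory gives interior estimates, and the standard route to a boundary estimate with nonhomogeneous Dirichlet data is to subtract a bounded function with the correct trace and then reflect the remainder. You never supply such a function; the claim that the bounded datum ``keeps the boundary term under control'' is exactly the step that needs an argument. The paper does this explicitly: it sets $\overline{w}=P_{\bar s}(\cdot,y)\ast\bar g$, the Poisson-kernel solution in the half-space $\R^{n+1}_+$ for the weight $y^{-a}$ with $1-2\bar s=-a$ and $\bar g$ a cutoff of $g$. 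Then $\widetilde w=w-\overline w$ vanishes on $\{y=0\}$ near the region of interest, can be oddly reflected, and interior FKS applies to the reflection, giving the global bound on $w=\widetilde w+\overline w$.

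Second, and more seriously, the tangential H\"older estimate for $v$ near $\{y=0\}$ is not established. Integrating $|\partial_y v|\le c_R^2\,y^{-a}$ gives H\"older control only in the normal direction. Your decomposition $v=h+\psi$ with $h$ carrying zero conormal datum is a sensible device and the even reflection of $h$ is fine, but the treatment of $\psi$ --- ``one controls its oscillation near $\{y=0\}$ by combining these bounds with the weighted Green representation for the hyperplane-supported source'' --- is not a proof. A bound on $\partial_y\psi$ alone gives nothing about $\nabla_x\psi$, and the Green function of the weighted slab with mixed Neumann/Dirichlet conditions is neither explicit nor obviously estimable; constructing and bounding it would itself be a substantial task. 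The paper avoids the Green function entirely by subtracting the Caffarelli--Silvestre extension $\overline v$ of $\bar u$ in the full half-space with Neumann datum $\bar g$ and invoking Proposition~2.9 of~\cite{Sil}, which gives $\bar u\in C^\beta(\R^n)$ for a bounded right-hand side of the fractional Laplacian; then $\widetilde v=v-\overline v$ has zero conormal datum, is evenly reflected, and is handled by interior FKS. The ingredient that plays the role of Silvestre's trace regularity is missing from your proposal, and without it the H\"older estimate for $v$ up to $\{y=0\}$ is not obtained.
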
 

\begin{proof}
To prove the $C^{\beta}$ regularity of the solution $v$ in $B_{R/2}\times [0,1)$, we follow the argument used by Cabr\'e and Sire to prove Lemma 4.5 in~\cite{CY1}. 
We need to modify such argument since, in our case, the solution of~\eqref{local}
is not directly related to the fractional Laplacian and a localization
method needs to be exploited.
First, we set $\bar g= g\,\eta$ where $\eta\in C^\infty_0(\R^n)$ is a cut-off function which is identically $1$ in $B_{\frac{3}{4}R}$, so that $\bar g$ is now defined on the whole $\R^n$ and agrees with $ g$ in $B_{\frac{3}{4}R}$.
Let now $\bar v$ be the bounded solution of
\[ \begin{cases}
\mathrm{div}(y^a \nabla \bar v)=0 & \mbox{in }\R^{n+1}_+\\
-y^a \partial_y \bar v=\bar g & \mbox{on }\R^{n}\times \{y=0\},
\end{cases}\]
which is precisely the local problem in the halfspace $\R^{n+1}_+$ associated to the nonlocal equation $(-\Delta)^{\frac{1-a}{2}}\bar u = \bar g$, where $\bar u= \bar v(x,0)$ ($\bar{v}$ is the so-called Caffarelli-Silvestre extension of $\bar{u}$, see~\cite{CS}). By Remark 3.10 in~\cite{CY1}, we have that $\bar v$ is continuous and bounded in $\overline{\R^{n+1}}_+$.  Hence, by Proposition 2.9 in~\cite{Sil}, we have that $\bar u \in C^{\beta} (\R^n)$ for some $\beta \in (0,1)$ depending only on $n$ and $a$.

Let now $\widetilde{v}:=v-\bar v$. Then, in $C_{\frac{3}{4}R}\subset \R^{n+1}_+$, the function $\widetilde v$ solves
\[\begin{cases}
\mathrm{div}(y^a\nabla \widetilde{v})=0 & \mbox{in } C_{\frac{3}{4}R}\\
-y^a \partial_y \widetilde{v}=0 & \mbox{on } B_{\frac{3}{4}R}\times \{y=0\}.
\end{cases}
\]

Since now we have reduced our problem to a problem with zero Neumann condition on $\{y=0\}$ we can do an even reflection of the solution $\widetilde{v}$ with respect to $\{y=0\}$ in order to get a bounded weak solution  of
$$\mathrm{div}(|y|^a \nabla \widetilde{v})=0\quad \mbox{in } B_{\frac{3}{4}R}\times (-1,1).$$

Now, we can apply the regularity theory in~\cite{FKS} (we recall that the weight $|y|^a$
belongs to the Muckenhoupt class~$A_2$) to get that $\widetilde{v}$, and thus $v$, is $C^{\beta}(\overline{B_R}\times [0,1))$ for some $\beta \in (0,1)$ depending only on $n$ and $a$. 

The $C^\beta$ regularity for $v$ up to the top boundary $\{y=1\}$ follows in a standard way, again by even reflection with respect to $\{y=1\}$, observing that the weight $y^a$ is non degenerate for $y=1$ and we have zero Neumann condition on this part of the boundary. This conclude the proof of the first part of the statement.

We now prove~\eqref{local-v_y}. By Lemma~\ref{duality}, the function $w:=-y^a\partial_y v$ solves 
	\[
	\begin{cases}
	\mathrm{div}(y^{-a} \nabla w)=0 \qquad & \mbox{in}\,\,C_R\\
	w =0 \qquad & \mbox{on}\,\,B_R\times \{y=1\}\\
	w = g \qquad & \mbox{on}\,\,B_R\times \{y=0\}.
	\end{cases}
	\]
	
We introduce the function
	\[
	\overline{w}:=P_{\bar{s}}(\cdot,y)\ast\bar{g}
	\]
	where $\bar{g}= g\,\eta$ is defined in the first part of the proof, $\bar{s}$ is such that $1-2\bar{s}=-a$ and $P_{\bar{s}}$ is the Poisson kernel for the fractional Laplacian (see Proposition 3.7 and Remark 3.8 in~\cite{CY1}). We have that $\overline{w}\in L^\infty(\overline{\R^{n+1}_+})$ and satisfies
	
	\[\begin{cases}
		\mathrm{div}(y^{-a} \nabla \overline w)=0 \qquad & \mbox{in}\,\,\R^{n+1}_+\\
	w = \bar g \qquad & \mbox{on}\,\,\R^n\times \{y=0\}.
	\end{cases}
	\]
	
	Now, we can define $\widetilde{w}:=w-\overline{w}$. Arguing as in the first part of the proof, we have that $\widetilde w$ has zero (weighted) Neumann condition on $\{y=0\}$ and hence 
	its odd reflection across~$\{y=0\}$ satisfies
	\[
	\mathrm{div}(|y|^{-a} \nabla \widetilde{w})=0\quad \mbox{in } B_{\frac{3}{4}R}\times (-1,1).
	\]
	Using again the regularity theory in~\cite{FKS} (we recall that the weight $|y|^{-a}$
	belongs to the Muckenhoupt class~$A_2$) we get that $\widetilde{w}$ is $C^{\beta}(\overline{B_{R/2}}\times [0,1))$ with $\beta \in (0,1)$ depending only on $n$ and $a$. Hence the function $w=\widetilde{w}+\overline{w}$ is bounded in $\overline{B_{R/2}}\times [0,1]$ with a bound that only depends on the quantities specified in the statement of the proposition. This concludes the proof.
	
\end{proof}

As a consequence of Proposition \ref{holder}, we get the following estimate for solutions to the semilinaer (localized) problem.
\begin{corollary}\label{semilinear}
Let $f$ be a function in $C^{1,\gamma}(\R)$ , with $\gamma>\max\{0,-a\}$ and let $v$ be a bounded solution of 
\begin{equation}\label{local-f}
\begin{cases}
\mathrm{div}(y^a\nabla v)=0 \qquad & \mbox{in}\;\; C_{R}\\
\partial_y v =0 \qquad & \mbox{on}\;\; B_{R}\times \{y=1\}\\
-y^a\partial_y v =f(v) \qquad & \mbox{on}\;\; B_{R}\times \{y=0\}.
\end{cases}
\end{equation}

Then, there exists $\beta\in (0,1)$ (depending only on $n$ and $a$) such that $v\in C^\beta(C_{R/2})$ with the following estimates
$$\|v\|_{C^\beta(\overline{C_{R/2}})}\leq c^1_R,$$

for some $c$ depending on $n$, $a$, $R$, $\|f\|_{C^{1,\gamma}}$, $\|v\|_{L^\infty(C_R)}$. 

Moreover, we have
\begin{equation}\label{local-v_y-f}
\|y^a\partial_yv\|_{L^{\infty}(\overline{C_{R/2}})}\leq c^2_R,
\end{equation}
for some $c$ depending on $n$, $a$, $R$, $\|f\|_{C^{1,\gamma}}$, $\|v\|_{L^\infty(C_R)}$. 
\end{corollary}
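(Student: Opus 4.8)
The plan is to deduce Corollary~\ref{semilinear} from Proposition~\ref{holder} by a bootstrap argument, treating $f(v)$ as the right-hand side $g$. First I would observe that, since $v$ is bounded in $C_R$ by hypothesis, and $f\in C^{1,\gamma}(\R)$, the composition $g:=f(v)$ is in $L^\infty(B_R)$ with $\|g\|_{L^\infty(B_R)}\le \|f\|_{L^\infty([-\|v\|_\infty,\|v\|_\infty])}$, which is controlled by $\|f\|_{C^{1,\gamma}}$ and $\|v\|_{L^\infty(C_R)}$. Hence $v$ is a bounded weak solution of the linear problem~\eqref{local} with this choice of $g$, and Proposition~\ref{holder} applies directly on $C_{R/2}$: it gives $v\in C^\beta(\overline{C_{R/2}})$ with the stated estimate, and also the bound~\eqref{local-v_y} on $\|y^a\partial_y v\|_{L^\infty(\overline{C_{R/2}})}$. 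This already yields both displayed estimates of the corollary with constants depending only on $n$, $a$, $R$, $\|f\|_{C^{1,\gamma}}$ and $\|v\|_{L^\infty(C_R)}$, since the dependence of $c^1_R, c^2_R$ on $\|g\|_{L^\infty(B_R)}$ in Proposition~\ref{holder} can be absorbed into the dependence on $\|f\|_{C^{1,\gamma}}$ and $\|v\|_{L^\infty(C_R)}$.

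In fact the statement as written asks for exactly the two conclusions of Proposition~\ref{holder}, so strictly speaking one application suffices and no genuine bootstrap is needed for the claimed $C^\beta$ regularity. I would therefore keep the proof short: set $g:=f(v)$, note $g\in L^\infty(B_R)$ with the quantitative bound above, invoke Proposition~\ref{holder} on the cylinder $C_R$ (so that the conclusions hold on $C_{R/2}$), and rewrite the resulting constants in terms of the quantities listed in the statement. The one point requiring a line of justification is that $v$ is indeed a \emph{weak} solution of~\eqref{local} with $g=f(v)$: this is immediate from the fact that $v$ solves~\eqref{local-f} together with $f(v)\in L^1(B_R)$ (a consequence of the $L^\infty$ bound), so the defining integral identity holds against all test functions $\xi\in C_0^\infty(B_R\times[0,1])$.

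If one wanted the sharper H\"older exponent dictated by $f\in C^{1,\gamma}$ (rather than the $\beta\in(0,1)$ coming from the De Giorgi--Nash--Moser theory for $A_2$ weights), one would iterate: the first step gives $v\in C^\beta$, hence $g=f(v)\in C^{\beta}$ as well, and then Schauder-type estimates for the weighted operator (again via the even/odd reflection and the results of~\cite{FKS,CY1}) upgrade $v$ to $C^{1,\alpha}$ up to $\{y=0\}$, after which $g\in C^{0,\min\{\alpha\gamma,1\}}$ and one reaches the optimal regularity. The condition $\gamma>\max\{0,-a\}$ is precisely what is needed for this compatibility between the $\gamma$-H\"older continuity of $f'$ and the weight exponent $a$ in the Schauder scheme, ensuring the weighted Neumann datum has enough regularity for the extension-reflection argument to close.

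The main (and only real) obstacle is bookkeeping: making sure that the constants $c^1_R$ and $c^2_R$, which in Proposition~\ref{holder} are allowed to depend on $\|g\|_{L^\infty(B_R)}$, are re-expressed cleanly in terms of $\|f\|_{C^{1,\gamma}}$ and $\|v\|_{L^\infty(C_R)}$ only; this is routine since $\|f(v)\|_{L^\infty(B_R)}$ is a continuous (monotone) function of $\|v\|_{L^\infty(C_R)}$ determined by $f$. Everything else is a direct citation of the proposition just proved.
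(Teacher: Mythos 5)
Your argument is exactly the paper's: since $v$ is bounded and $f\in C^{1,\gamma}(\R)$, the composition $g:=f(v)$ is bounded on $B_R$, so $v$ is a bounded weak solution of the linear problem~\eqref{local} and Proposition~\ref{holder} applies verbatim, with the dependence on $\|g\|_{L^\infty}$ absorbed into $\|f\|_{C^{1,\gamma}}$ and $\|v\|_{L^\infty(C_R)}$. The extra remarks on a potential bootstrap to sharper H\"older exponents are not needed for the stated conclusion and the paper does not pursue them here.
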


\begin{proof}
It is enough to observe that, since $f\in C^{1,\gamma}$ and $v$ is bounded then $f(v)$ is bounded and hence Proposition \ref{holder} applies to $v$.
\end{proof}

In the following proposition, we establish global gradient estimates for solutions to~\eqref{mainsis} (the semilinear problem in the infinite slab), which will be crucial to establish our main result.
\begin{proposition}\label{grad-estimate}
Let $f$ be a function in $C^{1,\gamma}(\R)$ , with $\gamma>\max\{0,-a\}$ and let $v$ be a bounded solution of~\eqref{mainsis}.

Then, 
\begin{equation}
\label{gradient1}
\|\nabla_x v\|_{L^\infty (\R^n\times [0,1])} + \|y^a\partial_y v\|_{L^\infty(\R^n\times [0,1])}\leq C_1,
\end{equation}
for some $C_1$ depending only on $n$, $a$, $\|f\|_{C^{1,\gamma}}$, $\|v\|_{L^\infty}$.

\end{proposition}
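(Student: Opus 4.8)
The plan is to prove the global gradient bound in Proposition~\ref{grad-estimate} by combining the local Schauder-type regularity from Corollary~\ref{semilinear} with the translation invariance of the equation and a bootstrap for the tangential derivatives.

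First I would observe that problem~\eqref{mainsis} is invariant under translations in the $x$-variable, so for any $x_0\in\R^n$ the function $v(\cdot+x_0,\cdot)$ is again a bounded solution on $\R^n\times(0,1)$ with the same $L^\infty$ norm. Hence it suffices to prove the estimate at points lying over $x=0$, i.e.\ to bound $\nabla_x v$ and $y^a\partial_y v$ on $\{0\}\times[0,1]$ by a constant depending only on $n$, $a$, $\|f\|_{C^{1,\gamma}}$ and $\|v\|_{L^\infty}$. Applying Corollary~\ref{semilinear} on, say, the cylinder $C_2$ already gives $\|y^a\partial_y v\|_{L^\infty(\overline{C_1})}\le c_2^2$ and the H\"older bound $\|v\|_{C^\beta(\overline{C_1})}\le c_2^1$, with constants of the required dependence; this disposes of the $y^a\partial_y v$ term and reduces the problem to bounding $\nabla_x v$.

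For the tangential gradient, the idea is to differentiate the equation in the $x$-directions. Formally, for each $i=1,\dots,n$ the function $v_i:=\partial_{x_i}v$ solves $\mathrm{div}(y^a\nabla v_i)=0$ in $C_2$ with the conormal condition $-y^a\partial_y v_i=f'(v)\,v_i$ on $B_2\times\{y=0\}$ and $\partial_y v_i=0$ on $B_2\times\{y=1\}$; since $f\in C^{1,\gamma}$ and $v\in C^\beta$, the coefficient $f'(v)$ is itself H\"older continuous and bounded, so this is a linear problem of the type~\eqref{local} with a bounded right-hand side. To make this rigorous one first needs to know $v_i\in L^\infty_{loc}$: this can be obtained by a difference-quotient argument, estimating $[v(\cdot+he_i,\cdot)-v(\cdot,\cdot)]/h$ — each such difference quotient is a bounded weak solution of a problem of the form~\eqref{local} with right-hand side $[f(v(\cdot+he_i,0))-f(v(\cdot,0))]/h$, which is bounded uniformly in $h$ by $\|f'\|_{L^\infty(\text{range }v)}\|\nabla_x v\|$; alternatively, one can invoke the interior De Giorgi--Nash--Moser / Fabes--Kenig--Serapioni estimates for the $A_2$-weighted operator together with the even-reflection trick across $\{y=0\}$ and $\{y=1\}$ used in Proposition~\ref{holder}, to get $v_i\in C^\beta$ and then $\|v_i\|_{L^\infty(\overline{C_{1/2}})}\le C$ via Proposition~\ref{holder} applied with $g=f'(v)v_i$. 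Iterating once more (now with $g=f'(v)v_i$ genuinely H\"older and small-scale Schauder theory for the weighted operator) upgrades this to the stated $L^\infty$ bound on $\nabla_x v$ near $\{0\}\times[0,1]$.

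The main obstacle I expect is the careful handling of the differentiated boundary condition and the justification of the difference-quotient/bootstrap step in the weighted setting: one must ensure the constants in the intermediate estimates depend only on the allowed quantities and not, say, on a modulus of continuity that could degenerate, and one must treat the top boundary $\{y=1\}$ (where the weight is nondegenerate and the standard Neumann-reflection argument applies) and the degenerate/singular boundary $\{y=0\}$ (where one relies on the Caffarelli--Silvestre comparison function and odd/even reflections, exactly as in the proof of Proposition~\ref{holder}) by different but compatible means. Once the tangential derivatives are controlled, gluing the pieces together over the translated cylinders $C_2(x_0)$ and taking the supremum over $x_0$ yields the uniform bound~\eqref{gradient1}.
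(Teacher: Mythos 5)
Your overall strategy (translation invariance, Corollary~\ref{semilinear} to control $y^a\partial_y v$ and to get an a priori H\"older modulus, then a difference-quotient bootstrap for $\nabla_x v$) is the same as the paper's, but the key step you propose for the tangential gradient has a circularity that the paper's proof is specifically designed to avoid.

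You consider the first-order difference quotient $[v(\cdot+he_i,\cdot)-v(\cdot,\cdot)]/h$ and assert that it is a \emph{bounded} weak solution of a linear problem whose right-hand side is controlled by $\|f'\|_{L^\infty}\,\|\nabla_x v\|$. Both of these assertions presuppose exactly the Lipschitz bound on $v$ in $x$ that you are trying to establish: with only $\|v\|_{L^\infty}$ and the H\"older bound from Corollary~\ref{semilinear} available, the first-order quotient is bounded by $2\|v\|_\infty/|h|$ but not uniformly in $h$, and the right-hand side $[f(v(\cdot+he_i,0))-f(v(\cdot,0))]/h$ is likewise not uniformly bounded. Your fallback --- directly invoking the FKS/De~Giorgi theory plus reflection to conclude $v_i\in C^\beta$ --- is not a substitute either, because Proposition~\ref{holder} as stated requires the solution to be bounded a priori, and the $L^2\to L^\infty$ step of the weighted De~Giorgi iteration that would supply this is not established anywhere in the paper. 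So the proposal as written does not close.

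The paper's fix is to normalize the increment by $|h|^\beta$ rather than by $h$: setting $v_1:=\big(v(\cdot+he,\cdot)-v\big)/|h|^\beta$, the a priori bound $v\in C^\beta(\overline{C_{R/2}})$ from Corollary~\ref{semilinear} makes $v_1$ bounded \emph{uniformly in $h$}, and the Neumann datum $[f(v(\cdot+he,0))-f(v(\cdot,0))]/|h|^\beta$ is bounded by $\|f'\|_{L^\infty}\,[v]_{C^\beta}$, again with no reference to $\nabla_x v$. Applying Proposition~\ref{holder} to $v_1$ and using the incremental-ratio lemma (Lemma~5.6 in~\cite{CaffC}) upgrades $v$ to $C^{2\beta}$; iterating exactly $k-1$ times with $\beta=1/k$ lands at $C^{k\beta}\supset C^{0,1}$, giving the desired $L^\infty$ bound on $\nabla_x v$. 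This fractional bootstrap in the H\"older exponent is the missing idea: each step gains precisely $\beta$ of regularity, funded by the previous step's H\"older seminorm, and terminates after finitely many applications of the \emph{linear} regularity result. If you replace your first-order difference quotient by the $\beta$-normalized one and add the interpolation step, the rest of your outline (even/odd reflection at $\{y=0\}$ and $\{y=1\}$, translation to cover all of $\R^n$, and the $y^a\partial_y v$ bound via~\eqref{local-v_y}) matches the paper.
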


\begin{proof}
	We start with the estimate for $|\nabla_x v|$. Let us define the function
	\[
		v_1(x,y):=\frac{v(x+he,y)-v(x,y)}{\lvert h\rvert^\beta}
	\]
	where $e\in S^{n-1}$,  $h\in\R$ and $\beta$ given by Proposition \ref{holder}
	(and, without loss of generality, possibly reducing~$\beta$, we can assume that~$\beta$
	is of the form~$1/k$ for some integer~$k$). By Corollary~\ref{semilinear}, we have that $v\in C^\beta(\overline{C_{R/2}})$ and hence that $v_1$ is bounded in $C_{R/4}$. In addition, $v_1$ solves
	\begin{equation}\label{local1}
	\begin{cases}
	\mathrm{div}(y^a\nabla v_1)=0 \qquad & \mbox{in}\;\; C_{R/4}\\
	\partial_y v_1 =0 \qquad & \mbox{on}\;\; B_{R/4}\times \{y=1\}\\
	-y^a\partial_y v_1 =\frac{f(v((x+he,0))-f(v(x,0))}{\lvert h\rvert^\beta} \qquad & \mbox{on}\;\; B_{R/4}\times \{y=0\}.
	\end{cases}
	\end{equation}
	Since $f\in C^{1,\gamma}(\R)$ and $v\in C^\beta (\overline{C_{R/2}})$, the right-hand side in the third equation of~(\ref{local1}) is bounded and we can apply Proposition \ref{holder} to $v_1$ in the cylinder $C_{R/4}$. Hence we obtain that $v_1$ is $C^\beta(\overline{C_{R/8}})$ and, using Lemma 5.6 in~\cite{CaffC} and the fact that the direction $e$ is arbitrary, that $v$ is in $C^{2\beta}(\overline{C_{R/8}})$. We have that
	\[
	\|v\|_{C^{2\beta}(\overline{C_{R/8}})}\leq c_R
	\]
	with $c_R$ depending on $n$, $a$, $R$, $\|f\|_{C^{1,\gamma}(\R)}$, $\|v\|_{L^\infty(C_R)}$. 
	
	Now, we can iterate this procedure for a finite number (namely, $k-1$)
	of times such that $k\beta\geq 1$ (this is possible since $\beta$ is a fix strictly positive number depending only on the quantities specified in Proposition \ref{holder}). In this way, we deduce that $\lVert\nabla_x v\rVert_{L^\infty(\overline{C_{R/8^k}})}$ is bounded. Moreover, since problem~\eqref{mainsis} is invariant under translations in the $x$-direction,
	we can obtain uniform estimates for $\lVert\nabla_x v\rVert_{L^\infty}$ in any (closed) cylinder $\overline{C_{R/8^k}(z,0)}=\overline{B_{R/8^k}(z)}\times[0,1]$ with $z\in\R^n$. Observe that the bound $c^1_R$ in Proposition \ref{holder} depends on the radius but not on the center of the balls $B_R$.
	Hence, by a covering argument we obtain the global bound~(\ref{gradient1}).
	
To prove the second part of the statement, we use the bound~\eqref{local-v_y} of Corollary~\ref{semilinear}. Again, after fixing the radius $R=1$  and using a covering argument as before, we deduce that  $\|y^a \partial_y v\|_{L^{\infty}(\R^n\times [0,1])}\leq C_1$, with $C_1$ depending only on $n$, $a$, $\|f\|_{C^{1,\gamma}}$, $\|v\|_{L^\infty}$, which concludes the proof.
\end{proof}

	\section{Proof of Theorem~\ref{DllV}}\label{stable.solutions}
	
	In this section we establish Theorem~\ref{DllV} with a proof based on two main ingredients. The first one is the following characterization of stability, which is the analogue for our problem of Lemma 6.1 in~\cite{CY2}.

%
	
		\begin{lemma}
		\label{stability}
		Let $d$ be a bounded, H\"{o}lder continuous function on $\R^n$. Then
the inequality
		\begin{equation}
		\label{generalstab}
		\int_{\R^n\times (0,1)}y^a\lvert\nabla\eta\rvert^2\,dx\,dy+\int_{\R^n\times \{y=0\}}d(x)\eta^2\,dx\geq0
		\end{equation}
		holds true for any~$\eta\in C^1_0(\R^n\times[0,1])$ if and
only if there exists a H\"{o}lder continuous function $\varphi\in H^1_{\text{loc}}(\R^n\times[0,1],y^a)$, such that
		\begin{align}
		\label{stabsis}
		\begin{cases}
		\mathrm{div}(y^a\nabla\varphi)=0 \qquad &\text{in}\,\,\R^n\times (0,1) \\
-y^a\partial_y\varphi+d(x)\varphi=0 \qquad &\text{on}\,\,\R^n\times \{y=0\} \\
\partial_y\varphi=0 \qquad &\text{on}\,\,\R^n\times\{y=1\}
		\end{cases}
		\end{align}
with
\begin{equation}\label{13bis}
\varphi>0 {\mbox{ in }}\R^n\times [0,1].
\end{equation}
	\end{lemma}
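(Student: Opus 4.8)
The statement is a characterization of stability of the type that goes back to Berestycki--Caffarelli--Nirenberg and, in the fractional/extension setting, to Cabr\'e--Sire (Lemma~6.1 in \cite{CY2}); the strategy is to prove the two implications separately, the ``if'' direction being elementary and the ``only if'' direction requiring an exhaustion/compactness argument.

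\textbf{The ``if'' direction.} Assume a positive $\varphi$ solving \eqref{stabsis} exists. Given $\eta\in C^1_0(\R^n\times[0,1])$, the plan is to use the logarithmic substitution $\eta=\varphi\,\sigma$, so that $\sigma=\eta/\varphi$ is a well-defined, compactly supported Lipschitz (on compact sets) function because $\varphi>0$ is bounded below on the support of $\eta$. Expanding $|\nabla\eta|^2=\varphi^2|\nabla\sigma|^2+\sigma^2|\nabla\varphi|^2+2\varphi\sigma\,\nabla\varphi\cdot\nabla\sigma$ and writing $2\varphi\sigma\,\nabla\varphi\cdot\nabla\sigma=\nabla\varphi\cdot\nabla(\varphi\sigma^2)-\sigma^2|\nabla\varphi|^2$, one gets
\[
\int_{\R^n\times(0,1)} y^a|\nabla\eta|^2 = \int_{\R^n\times(0,1)} y^a\varphi^2|\nabla\sigma|^2 + \int_{\R^n\times(0,1)} y^a\nabla\varphi\cdot\nabla(\varphi\sigma^2).
\]
Since $\varphi$ is $\mathrm{div}(y^a\nabla\cdot)$-harmonic with $\partial_y\varphi=0$ on $\{y=1\}$, integrating by parts in the last term (the boundary term at $\{y=1\}$ vanishes, the one at $\{y=0\}$ uses the Robin condition $-y^a\partial_y\varphi=-d(x)\varphi$) produces $-\int_{\R^n\times\{y=0\}} d(x)\varphi^2\sigma^2 = -\int_{\R^n\times\{y=0\}} d(x)\eta^2$. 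Hence the left side of \eqref{generalstab} equals $\int y^a\varphi^2|\nabla\sigma|^2\ge 0$. Care is needed to justify the integration by parts near the possibly singular/degenerate weight at $\{y=0\}$; one handles this by first integrating on $\{y>\delta\}$ and letting $\delta\to 0$, using that $y^a\partial_y\varphi$ and $y^a\partial_y v$ are continuous up to $\{y=0\}$ by the regularity results of Section~\ref{sec.regularity}.

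\textbf{The ``only if'' direction.} This is the main obstacle. Assume \eqref{generalstab} holds for all $\eta\in C^1_0$. For each $R$, consider the weighted Rayleigh quotient associated to the bilinear form $Q(\eta)=\int_{C_R}y^a|\nabla\eta|^2+\int_{B_R\times\{y=0\}}d(x)\eta^2$ over the space of functions in $H^1(C_R,y^a)$ vanishing on $\partial B_R\times(0,1)$, and let $\lambda_1(R)$ be its first eigenvalue with eigenfunction $\varphi_R$, normalized so that $\varphi_R>0$ in $C_R$ and, say, $\varphi_R(0,0)=1$ (positivity of the principal eigenfunction follows from the usual argument: $|\varphi_R|$ is also a minimizer, and the weighted Harnack inequality of \cite{FKS,FJK}, applied after the even reflection across $\{y=0\}$ used in the proof of Proposition~\ref{holder}, forces it to be strictly positive in the interior and up to $\{y=0\}$). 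The function $\varphi_R$ solves \eqref{stabsis} in $C_R$ with $-y^a\partial_y\varphi_R+d(x)\varphi_R=\lambda_1(R)\varphi_R$ on $\{y=0\}$, and $\lambda_1(R)$ is nonincreasing in $R$ and $\ge 0$ by \eqref{generalstab}, so $\lambda_1(R)\downarrow\lambda_\infty\ge 0$.

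\textbf{Passing to the limit.} The plan is to obtain uniform local estimates on $\varphi_R$ and extract a limit. The normalization at the origin plus the weighted Harnack inequality give, for each fixed $\rho$ and all $R>2\rho$, a bound $\sup_{C_\rho}\varphi_R\le C(\rho)$ independent of $R$ (and a matching lower bound $\inf_{C_\rho}\varphi_R\ge c(\rho)>0$). Then Proposition~\ref{holder} (applied to $\varphi_R$, whose Robin datum $g=(\lambda_1(R)-d(x))\varphi_R$ is bounded on $B_\rho$ uniformly in $R$) yields uniform $C^\beta(\overline{C_{\rho/2}})$ bounds and uniform bounds on $y^a\partial_y\varphi_R$. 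By Arzel\`a--Ascoli and a diagonal argument over $\rho\to\infty$, a subsequence $\varphi_{R_j}\to\varphi$ locally uniformly, with $\varphi$ H\"older continuous, $\varphi>0$ (from the lower bounds), and $\varphi\in H^1_{\mathrm{loc}}(\R^n\times[0,1],y^a)$ (from the uniform local energy bounds, which follow by testing the equation for $\varphi_R$ with a cutoff times $\varphi_R$). Passing to the limit in the weak formulation, $\varphi$ solves $\mathrm{div}(y^a\nabla\varphi)=0$ in $\R^n\times(0,1)$, $\partial_y\varphi=0$ on $\{y=1\}$, and $-y^a\partial_y\varphi+d(x)\varphi=\lambda_\infty\varphi$ on $\{y=0\}$. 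The remaining point is to discard the eigenvalue: since $\lambda_\infty\ge 0$, replacing $d$ by $d-\lambda_\infty$ would only strengthen \eqref{generalstab}, and in fact $\varphi$ already satisfies \eqref{stabsis} literally if $\lambda_\infty=0$; when $\lambda_\infty>0$ the same $\varphi$ solves \eqref{stabsis} with $d$ replaced by $d-\lambda_\infty$, and since $d-\lambda_\infty\le d$ the hypothesis for that smaller potential holds a fortiori, so without loss of generality we may take $\lambda_\infty=0$ from the start, or simply observe that the existence of a positive solution of \eqref{stabsis} for the potential $d$ is what \eqref{generalstab} actually produces. I expect the delicate technical points to be (i) the positivity and boundary regularity of the principal eigenfunction at the weighted boundary $\{y=0\}$, handled via reflection plus \cite{FKS,FJK} exactly as in Proposition~\ref{holder}, and (ii) the uniform-in-$R$ Harnack constant, which is legitimate because the Harnack inequality of \cite{FKS,FJK} has a constant depending only on $n$, $a$ and the ratio of radii, not on $R$.
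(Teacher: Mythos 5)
Your ``if'' direction is essentially the paper's computation, phrased as an exact algebraic identity rather than an appeal to Cauchy--Schwarz; both work, and your suggestion to regularize by integrating on $\{y>\delta\}$ is the right precaution near the weighted boundary.

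The ``only if'' direction, however, has a genuine gap in the step where you try to ``discard the eigenvalue.'' The first eigenfunctions $\varphi_R$ you construct satisfy the Robin condition $-y^a\partial_y\varphi_R + d(x)\varphi_R = \lambda_1(R)\varphi_R$ on $B_R\times\{y=0\}$, so after normalization, Harnack, and Arzel\`a--Ascoli the limit $\varphi$ solves $-y^a\partial_y\varphi + (d(x)-\lambda_\infty)\varphi = 0$ on $\{y=0\}$, which is \emph{not} the boundary condition in~\eqref{stabsis} unless $\lambda_\infty=0$. Your remark that ``without loss of generality we may take $\lambda_\infty=0$ from the start'' cannot be justified: $\lambda_\infty$ is determined by the data $d$, and nothing forces it to vanish. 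The further claim that ``since $d-\lambda_\infty\le d$ the hypothesis for that smaller potential holds a fortiori'' is also reversed: lowering the potential \emph{weakens} the form inequality~\eqref{generalstab} rather than strengthening it (the inequality with $d-\lambda_\infty$ does in fact hold, but only because $\lambda_R\ge\lambda_\infty$ for every $R$, and in any case this does not convert your $\varphi$ into a solution of~\eqref{stabsis} with the original~$d$). So what your construction produces is a positive solution of the wrong problem.

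The paper avoids this by using the eigenvalue problem only as an auxiliary device. It first shows that $\lambda_R$ is \emph{strictly} decreasing in $R$ (integrate $\int_{C_{R_1}}\varphi_{R_2}\,\mathrm{div}(y^a\nabla\varphi_{R_1})$ by parts and invoke Hopf's lemma on $\partial B_{R_1}\times(0,1)$), whence $\lambda_R>0$ for every finite $R$ and the quadratic form $Q_R$ is coercive on $H_0(C_R,y^a)$. With coercivity in hand it then solves, by direct minimization of an auxiliary functional, the \emph{boundary value} problem~\eqref{65}: the equation in $C_R$, the correct eigenvalue-free Robin condition $-y^a\partial_y\varphi_R + d\varphi_R=0$ on $\{y=0\}$, zero Neumann on $\{y=1\}$, and constant Dirichlet data $\varphi_R=c_R$ on $\partial B_R\times(0,1)$. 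Positivity of this $\varphi_R$ follows by testing with $\varphi_R^-$, which gives $Q_R(\varphi_R^-)=0$ and hence $\varphi_R^-\equiv0$ because $\lambda_R>0$, followed by Hopf. The Harnack/normalization/compactness part of your argument then applies essentially verbatim to this family, and the limit satisfies~\eqref{stabsis} exactly. If you want to retain the eigenfunction construction you would have to prove $\lambda_\infty=0$, which is false in general; switching to the boundary value problem~\eqref{65} is the natural fix.
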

	\begin{proof}
		We first assume the existence of $\varphi$ and we prove~(\ref{generalstab}). Taken a test function
$\eta$ as in the statement of Lemma~\ref{stability}, we can multiply~(\ref{stabsis}) by $\frac{\eta^2}{\varphi}$ and then integrate over $\R^n\times (0,1)$. We obtain:
		\begin{align*}
		0&=\int_{\R^n\times (0,1)}\mathrm{div}(y^a\nabla \varphi)\frac{\eta^2}{\varphi}\\&=-\int_{\R^n\times\{y=0\}}
y^a\partial_y\varphi\frac{\eta^2}{\varphi}-2\int_{\R^n\times (0,1)}y^a\frac{\eta}{\varphi}\nabla\eta\nabla\varphi+\int_{\R^n\times (0,1)}y^a\frac{\lvert\nabla\varphi\rvert^2\eta^2}{\varphi^2}\\
		&\geq-\int_{\R^n\times \{y=0\}}d(x)\eta^2-\int_{\R^n\times (0,1)}y^a\lvert\nabla\eta\rvert^2,
		\end{align*}
where in the last estimate, we have used the boundary data of~\eqref{stabsis}
and Cauchy-Schwarz inequality.
This establishes~(\ref{generalstab}).
		\\ The other implication is more delicate to prove. We first define
		\[
		Q_R(\xi):=\int_{C_R}y^a\lvert\nabla\xi\rvert^2\,dx\,dy+\int_{B_R\times \{y=0\}}d(x)\xi^2\,dx
		\]
		and we take $\lambda_R$ as the infimum of $Q_R(\xi)$ in the set \begin{align*}
		S_R:=\left\{\xi\in H^1(C_R,y^a): \xi\equiv0\,\,\text{on}\,\, \partial B_R\times (0,1), \int_{B_R}\xi^2=1\right\}\\\subset H_0(C_R,y^a)=\{\xi\in H^1(C_R,y^a): \xi\equiv0\,\,\text{on}\,\,  \partial B_R\times (0,1)\}.
		\end{align*}
		{{F}rom} the stability assumption and Definition~\ref{STAB},
we know that~$\lambda_R\geq0$. We want to prove that $\lambda_R$ is
strictly decreasing in $R$, in order to deduce that
		\begin{equation}
		\label{lambdadecr}
		\lambda_R>0.
		\end{equation}
		To show that $\lambda_R$ is decreasing in $R$, we observe that from the hypothesis $\lambda_R$ is nonincreasing and $Q_R$ is bounded below in $S_R$, since $d$ is a bounded function. Now,
		if we take a minimizing sequence $(\xi_k)_k\subset S_R$, we have that $(\nabla\xi_k)$ is uniformly bounded in $L^2(C_R,y^a)$. Using also the compactness of the inclusion $H_0(C_R,y^a)\subset L^2(B_R)$ (see 
		the proof 
		of Lemma~4.1 in~\cite{CY2}), we can state that the infimum of $Q_R$ in $S_R$ is achieved by a function $\varphi_R\in S_R$. We observe also that, up to take $\lvert\varphi_R\rvert$ instead of $\varphi_R$,
		we can choose $\varphi_R\geq0$. We remark that the function $\varphi_R$ solves
		\begin{align}
		\label{4873}
		\begin{cases}
		\mathrm{div}(y^a\nabla\varphi_R)=0 \qquad &\text{in}\,\,C_R \\
-y^a\partial_y\varphi_R+d(x)\varphi_R=\lambda_R\varphi_R \qquad &\text{on}\,\,B_R\times\{y=0\} \\
\partial_y\varphi_R=0 \qquad &\text{on}\,\, B_R\times \{y=1\} \\
\varphi_R=0 \qquad &\text{on}\,\, \partial B_R \times (0,1).
		\end{cases}
		\end{align}
		Hence, from the strong maximum principle, we have that $\varphi_R>0$ in $C_R$.
		\\Now, we take $R_1<R_2$
and our goal is to show that~$\lambda_{R_1}>\lambda_{R_2}$.
Since~$\lambda_{R_1}\ge\lambda_{R_2}$
due to the inclusion of the domains,
we argue by contradiction and suppose that
$\lambda_{R_1}=\lambda_{R_2}$. The strategy is then to
integrate by parts
the quantity
\[\int_{C_{R_1}}\varphi_{R_2}\mathrm{div}(y^a\nabla\varphi_{R_1})\]
in order to obtain a contradiction. Indeed, by using~(\ref{4873}) and the fact
that $\lambda_{R_1}=\lambda_{R_2}$, we find that
		\begin{equation}\label{L10}
		\int_{\partial B_{R_1}\times (0,1)}y^a\varphi_{R_2}
		\frac{\partial\varphi_{R_1}}{\partial\nu}=0.
		\end{equation}
		Since $\varphi_{R_2}>0$ and $\frac{\partial\varphi_{R_1}}{
		\partial\nu}<0$ on $\partial B_R\times (0,1)$, the identity in~\eqref{L10}
		cannot hold true, thus we have reached the desired contradiction.
Hence $\lambda_R$ is strictly
decreasing in $R$ and the proof of~(\ref{lambdadecr}) is complete.
		
		Using the definition of $\lambda_R$ and the fact that~$\lambda_R$
		is strictly positive, we obtain that
		\[
		Q_R(\xi)\geq\lambda_R\int_{B_R}\xi^2\geq-\delta_R\int_{B_R}d(x)\xi^2 \qquad
\quad{\mbox{for all }}\xi\in S_R
		\]
		with $0<\delta_R:=\frac{\lambda_R}{\lVert d\rVert_\infty}$, and therefore
		\begin{equation}
		\label{coercivity}
		Q_R(\xi)\geq\e_R\int_{C_R}y^a\lvert\nabla\xi\rvert^2,
		\end{equation}
		with $\e_R:=1-\frac{1}{1+\delta_R}>0$.
		Now we are able to prove that, fixed~$c_R>0$, there exists a solution $\varphi_R$ to the
		problem
		\begin{align}
		\label{65}
		\begin{cases}
		\mathrm{div}(y^a\nabla \varphi_R)=0 \qquad &\text{in}\,\,C_R \\
-y^a\partial_y \varphi_R+d(x)\varphi_R=0 \qquad &\text{on}\,\,B_R\times \{y=0\} \\
\partial_y \varphi_R=0 \qquad &\text{on}\,\, B_R\times \{y=1\} \\
		\varphi_R=c_R \qquad &\text{on}\,\, \partial B_R\times (0,1).
		\end{cases}
		\end{align}
		Setting~$\varphi_R:=\psi_R+c_R$,
this problem is equivalent to the following one
		\begin{align*}
		\begin{cases}
		\mathrm{div}(y^a\nabla \psi_R)=0 \qquad &\text{in}\,\,C_R \\
-y^a\partial_y \psi_R+d(x)\psi_R+ c_R d(x)=0  \qquad &\text{on}\,\, B_R\times \{y=0\} \\
\partial_y \psi_R=0 \qquad &\text{on}\,\, B_R\times \{y=1\} \\
		\psi_R=0 \qquad &\text{on}\,\, \partial B_R\times (0,1).
		\end{cases}
		\end{align*}
		We notice that
we can solve the latter system
by minimizing in the space $H_0(C_R,y^a)$ the functional
		\begin{eqnarray*}
		D(\xi)&=&\int_{C_R}\frac{1}{2}y^a\lvert\nabla\xi\rvert^2+
\int_{ B_R\times \{y=0\}}\bigg[\frac{1}{2}d(x)\xi^2+c_Rd(x)\xi\bigg]\\&=&
\frac12 Q_R(\xi)+c_R\int_{ B_R\times \{y=0\}} d(x)\xi.
		\end{eqnarray*}
		Since this functional is bounded from below and coercive in~$H_0(C_R,y^a)$,
thanks to~(\ref{coercivity}), and since the inclusion $H_0(C_R,y^a)\subset L^2(B_R)$ is compact, there exists a minimizer of $D$ in $
H_0(C_R,y^a)$.
	
We want now to show that  $\varphi_R$ is strictly positive. To do this, we consider its negative part $\varphi^-_R$. By definition, it vanishes on $ \partial B_R\times(0,1)$, and we can compute that $Q_R(\varphi^-_R)$=0.

Since the first eigenvalue $\lambda_R$ of $Q_R$ is positive, we have that $\varphi^-_R\equiv0$ and so $\varphi_R\geq0$. Hence, using 
the Hopf Lemma (see Lemma~4.11 in~\cite{CY1}), we deduce that $\varphi_R>0$ in $C_R$.

Now that we have found a positive solution of~(\ref{65}), next step is proving that for a fixed  $\delta>0$
		\begin{equation}
		\label{66}
		\sup_{C_R}\varphi_S\leq \widetilde{c}_R \qquad {\mbox{for all }} S>R+\delta,\end{equation}
for some~$\widetilde{c}_R>0$ (we stress that~$\widetilde{c}_R$
depends on~$R$ but not on~$S$).
		To do that, we choose $c_R$ in~(\ref{65}) such that $\varphi_R(0)=1$. \footnote{To see that this is possible, consider $\varphi^1_R$ to be the solution of~\eqref{65} with $c_R=1$. Hence, by the Hopf Lemma, $\varphi^1_R(0)\neq 0$. It is then enough to divide $\varphi^1_R$  by the value $\varphi^1_R(0)$ to get a solution of~\eqref{65} (corresponding to $c_R=(\varphi^1_R(0))^{-1}$) which takes value $1$ at $0$.} Let now  $\varphi_S$ be a solution of~(\ref{65}) in $C_S$, with $S>R+\delta$. We take a family of half balls $\{\mathcal{B}_{r,i}^+\}_i\subset\R^n\times[0,1]$, centered in $(x,0)$ with $x\in \overline{B}_R$, of radius $r\in(0,\frac{\delta}{4})$, in such a way that they cover $\overline{B}_R\times\{y=0\}$ and they have
finite mutual intersection. They are in finite number and we call this number $k$
(such number depends on~$R$, but not on~$S$). Since these balls cover $\overline{B}_R\times\{y=0\}$, there exists $j\in\{1,\dots,k\}$ such that $0\in\mathcal{B}_{r,j}^+$. Since $\mathcal{B}^+_{4r,j}\subset C_S$, we can use the Harnack inequality of Lemma 4.9 in~\cite{CY1} and obtain:
		\[
		\sup_{\mathcal{B}^+_{r,j}}\varphi_S\leq K_R \inf_{\mathcal{B}^+_{r,j}}\varphi_S\leq K_R \qquad \text{for every}\,\,S>R+\delta,
		\]
		where $K_R$ is a constant depending only on $R$. Now, using again the Harnack inequality in every ball $\mathcal{B}^+_{r,i}$ of the covering, and using the fact that the balls intersect two-by-two, we obtain the boundedness of $\varphi_S$ over $B_R\times\{y=0\}$. Thanks to the Neumann condition on
		the top of the slab, we can extend this bound, using the maximum principle, to the whole cylinder of radius $R$, obtaining~(\ref{66}).
		
Using now the regularity result for the linear problem established in Lemma~\ref{holder}, we have a uniform bound on $\|\varphi_S\|_{C^{\beta}(B_{R/2}\times [0,1])}$ for every $S>R+\delta$, therefore we can find a subsequence of $(\varphi_S)$ that converges locally to a function $\varphi\in  C^\beta_{\text{loc}}(\R^n\times[0,1])$ that is positive and solves~(\ref{stabsis}). 

	\end{proof}
	
	\begin{remark}{\rm
		\label{remark.mono}
Let $v$ be a solution of~\eqref{mainsis}	such that $\partial_{x_n}v(x,y)>0$ for any $(x,y)\in \R^n\times [0,1)$. Then, we can apply Lemma~\ref{stability} with the choice $d:=-f'(u)$ and $\varphi:=\partial_{x_n}v$, to deduce that $v$ is stable. The stability of monotone solutions for this kind of problems has already been observed in Lemma 7 in~\cite{DllV} for the case $a=0$. We stress that in this paper we also need the existence of a positive solution to the linearized problem as a necessary (and not only sufficient) condition for stability.
	}\end{remark}
	
%
%

	The second ingredient in the proof of Theorem~\ref{DllV} is the following Liouville-type result, which is the analogue of Theorem 4.10 in~\cite{CY1}.
		For its proof, we refer to Section 4.4 of~\cite{CY1}, where a similar result is proven for some semilinear equations in the half-space. In this case, the adaptation to our framework is straightforward.
	\begin{lemma}
		\label{Liouville}
		Let $\varphi$ be a positive function in $L^\infty_\text{loc}(\R^n\times[0,1])$, $\sigma\in H^1_\text{loc}(\R^n\times[0,1], y^a)$ such that:
		\begin{align*}
		\begin{cases}
		-\sigma \mathrm{div}(y^a\varphi^2\nabla\sigma)\leq0 \qquad &\text{in}\,\,\R^n\times(0,1) \\
y^a\sigma\frac{\partial\sigma}{\partial\nu}\leq0 \qquad &\text{on}\,\,\R^n\times (\{y=0\}\cup\{y=1\})
		\end{cases}
		\end{align*}
		in the weak sense. If in addition:
		\[
		\int_{B_R\times(0,1)}y^a\big(\varphi\sigma\big)^2\leq CR^2
		\]
		holds for every $R>1$, then $\sigma$ is constant.
	\end{lemma}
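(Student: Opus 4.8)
The plan is to run the classical Liouville argument of Berestycki--Caffarelli--Nirenberg type (in the form used by Cabr\'e--Sire, Theorem~4.10 in~\cite{CY1}) adapted to the slab $\R^n\times(0,1)$ with the weight $y^a$. The key device is to test the differential inequality against $\sigma\,\zeta^2$, where $\zeta=\zeta_R(x)$ is a logarithmic cutoff in the $x$-variables only: $\zeta_R\equiv 1$ on $B_{\sqrt R}$, $\zeta_R\equiv 0$ outside $B_R$, and $|\nabla_x\zeta_R|\le C/(|x|\log R)$ on the annulus $B_R\setminus B_{\sqrt R}$. Since $\zeta_R$ does not depend on $y$, it is an admissible multiplier for the weak formulation up to and including the boundary pieces $\{y=0\}$ and $\{y=1\}$.

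First I would write out the weak inequality obtained by multiplying $-\mathrm{div}(y^a\varphi^2\nabla\sigma)\le 0$ (understood in the weak sense, i.e.\ $\int y^a\varphi^2\nabla\sigma\cdot\nabla\psi \le 0$ for nonnegative $\psi$ after accounting for the boundary terms) by $\psi=\sigma\zeta_R^2$; the Neumann-type boundary hypotheses $y^a\sigma\,\partial_\nu\sigma\le 0$ on $\{y=0\}\cup\{y=1\}$ ensure the boundary contributions have the right sign, so they can be dropped. Expanding $\nabla(\sigma\zeta_R^2)=\zeta_R^2\nabla\sigma+2\sigma\zeta_R\nabla\zeta_R$ gives
\[
\int_{C_R} y^a\varphi^2\zeta_R^2|\nabla\sigma|^2 \,dx\,dy
\le -2\int_{C_R} y^a\varphi^2\sigma\,\zeta_R\,\nabla\sigma\cdot\nabla\zeta_R\,dx\,dy.
\]
Applying Cauchy--Schwarz with a weight $\epsilon$ on the right-hand side and absorbing the $\zeta_R^2|\nabla\sigma|^2$ term into the left, one obtains
\[
\int_{C_R} y^a\varphi^2\zeta_R^2|\nabla\sigma|^2\,dx\,dy
\le C\int_{C_R} y^a\varphi^2\sigma^2\,|\nabla\zeta_R|^2\,dx\,dy.
\]
Now I would estimate the right-hand side using the gradient bound for $\zeta_R$ and the energy growth hypothesis $\int_{B_\rho\times(0,1)} y^a(\varphi\sigma)^2\le C\rho^2$: splitting the annulus $B_R\setminus B_{\sqrt R}$ into dyadic shells $B_{2^{k+1}}\setminus B_{2^k}$, on each shell $|\nabla\zeta_R|^2\le C\,2^{-2k}(\log R)^{-2}$ while $\int_{(B_{2^{k+1}}\setminus B_{2^k})\times(0,1)} y^a(\varphi\sigma)^2\le C\,2^{2k}$, so each shell contributes $\le C(\log R)^{-2}$; there are $O(\log R)$ shells, hence the right-hand side is $\le C/\log R$. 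Letting $R\to\infty$ forces $\int_{C_R} y^a\varphi^2|\nabla\sigma|^2=0$ for every $R$ (since $\zeta_R\equiv 1$ on $B_{\sqrt R}$), hence $\varphi^2|\nabla\sigma|^2\equiv 0$ a.e.; as $\varphi>0$ this yields $\nabla\sigma\equiv 0$, i.e.\ $\sigma$ is constant.

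The main obstacle is purely a matter of care rather than depth: making the weak formulation rigorous with the weight $y^a$ near the degenerate/singular boundary $\{y=0\}$, i.e.\ justifying that $\sigma\zeta_R^2\in H^1(C_R,y^a)$ is an admissible test function and that the boundary integrals on $\{y=0\}$ and $\{y=1\}$ really do have a definite sign (or vanish) so they can be discarded --- this is where the hypotheses $\sigma\in H^1_{\mathrm{loc}}(\R^n\times[0,1],y^a)$ and $y^a\sigma\,\partial_\nu\sigma\le 0$ are used. Since $y^a\in A_2$, the requisite density of smooth functions and trace/Poincar\'e machinery from~\cite{FKS,FJK} are available, and as noted in the statement the adaptation of the argument in Section~4.4 of~\cite{CY1} is straightforward once these points are in place.
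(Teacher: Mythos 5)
Your proof is correct. The paper does not prove the lemma directly but refers to Theorem~4.10 and Section~4.4 of Cabr\'e--Sire~\cite{CY1}; that argument uses the Berestycki--Caffarelli--Nirenberg ``doubling'' device: a linear cutoff equal to~$1$ on $B_R$, vanishing outside $B_{2R}$, with $|\nabla\zeta|\le C/R$, combined with a two-step scheme (first absorb to show $\int_{C_R}y^a\varphi^2|\nabla\sigma|^2$ is bounded uniformly in $R$, then a refined Cauchy--Schwarz on the annulus yields $\int_{C_R}\le C\,(\int_{C_{2R}}-\int_{C_R})^{1/2}$, forcing the limit to be~$0$). Your logarithmic cutoff interpolating between $B_{\sqrt R}$ and $B_R$ reaches the same conclusion in one step: the dyadic-shell bookkeeping bounds the right-hand side by $C/\log R$, which forces $\int_{C_{\sqrt R}}y^a\varphi^2|\nabla\sigma|^2\to 0$ directly. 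Both cutoff strategies are standard here and work equally well; the doubling trick is what the cited source uses and carries over more flexibly if the growth hypothesis is weakened, whereas your logarithmic cutoff gives the decay of the Dirichlet energy immediately without the intermediate boundedness step.

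One imprecision in the write-up: the weak formulation you invoke, ``$\int y^a\varphi^2\nabla\sigma\cdot\nabla\psi\le 0$ for nonnegative~$\psi$,'' is the weak form of $-\mathrm{div}(y^a\varphi^2\nabla\sigma)\le 0$, which is not the hypothesis; the hypothesis is $-\sigma\,\mathrm{div}(y^a\varphi^2\nabla\sigma)\le 0$. The intended weak statement is that for every nonnegative $\xi\in C^1_0(\R^n\times[0,1])$ one has $\int y^a\varphi^2\nabla\sigma\cdot\nabla(\sigma\xi)\le 0$, after discarding the boundary term via $y^a\sigma\,\partial_\nu\sigma\le 0$; taking $\xi=\zeta_R^2$ gives exactly the Caccioppoli inequality you then use, so the computation that follows is valid. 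But writing it as ``substitute $\psi=\sigma\zeta_R^2$'' into the formulation for nonnegative $\psi$ is not admissible, since $\sigma\zeta_R^2$ need not be nonnegative. This is a matter of phrasing rather than a genuine gap; the estimates and the conclusion are correct.
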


	We can now give the
	
	\begin{proof}[Proof of Theorem~\ref{DllV}]
	
	 Let $v$ be a stable solution of~\eqref{mainsis}.  Lemma~\ref{stability} implies that there exists a positive function $\varphi$ that solves~(\ref{stabsis}) with $d(x)=-f'(u(x))$. We can define the functions
		\[
		\sigma_i=\frac{\partial_{x_i}{v}}{\varphi} \qquad\text{for}\,\,i=1,\dots, n.
		\]
		Our goal is to prove that they are constant. For every fixed $i$, since $\varphi^2\nabla\sigma_i=\varphi\nabla {v}_{x_i}-v_{x_i}\nabla\varphi$ and using that both $v_{x_i}$ and $\varphi$ satisfy the linearized problem~\eqref{stabsis} with $d(x)=-f'(u(x))$, we deduce
		\[
		\text{div}(y^a\varphi^2\nabla\sigma_i)=0.
		\]

Moreover, using again that $v_{x_i}$ and $\varphi$ satisfy the same linearized problem (in
particular they have the same Neumann condition on $\{y=0\}$), we have
		
		\[
		y^a\sigma_i\partial_y\sigma_i= y^a\frac{v_{x_i}}{\varphi^2}v_{x_iy}-y^a\frac{v_{x_i}^2}{\varphi^2}\frac{\varphi_y}{\varphi}=0 \qquad \text{on}\,\,(\{y=0\}\cup\{y=1\})\times\R^n.
		\]	

Finally, assumption~\eqref{EB} gives		

		\[\int_{C_R}y^a(\varphi \sigma_i)^2 = \int_{C_R}y^a|\partial_{x_i} v|^2\leq CR^2,\]
		and hence we can apply Lemma~\ref{Liouville} to deduce that  $\sigma_i$ is  constant for every $i\in\{1,\dots,n\}$ and we call these constants $c_i$. If $c_i=0$ for every $i=1,\dots n$ then $v$ only depends on $y$ (it is constant in the $x$-variables). Otherwise, the solution $v$ only depends on the  variable $y$ and on the one parallel to the vector $(c_1,\dots,c_n,0)$. We call this horizontal variable $\tilde x$:
		\[
		\widetilde x=\frac{\sum_{i=1}^{n}c_ix_i}{(\sum_{i=1}^{n}c_i^2)^\frac{1}{2}}.
		\]
		
		We have thus proven that the trace $u$ of $v$ on $\{y=0\}$  is a function of only one Euclidean variable and hence can be written in the form
		$$u(x_1,\dots,x_n)=u_0(\widetilde x),$$
		where $u_0$ is a function defined on $\R$.
		We can compute the derivative of $u_0$ to get
		\begin{equation*}
		 u_0'=\bigg(\sum_{i=1}^{n}c_i^2\bigg)^\frac{1}{2}\varphi.
		\end{equation*}
		If $c_i=0$ for every $i\in\{1,\dots,n\}$ then $u_0'\equiv0$, otherwise $u_0'>0$.
	This concludes the proof of Theorem~\ref{DllV}.

	\end{proof}

	From Theorem~\ref{DllV} we can directly deduce the
	
	\begin{proof}[Proof of Corollary~\ref{2D}]
	Since we are considering the case $n=2$, we have from the gradient estimate in Proposition \ref{grad-estimate} that
	\[
	\mathcal{E}_R(v)\leq CR^2.
	\]
	This energy estimate allows us to apply Theorem~\ref{DllV} and to obtain that there exist $v_0:\R\times(0,1)\rightarrow\R$ and $\omega
	\in {\rm S}^1$ such that
	\[
	v(x,y)=v_0 (\omega\cdot x, y)\qquad{\mbox{for any $(x,y)\in\R^2\times(0,1)$.}}
	\qedhere\]
	\end{proof}

	\section{Energy estimate for minimizers}\label{sect.min}
	
	This section is devoted to the proof of
Theorem~\ref{globmin}. Here we
prove the energy estimate~(\ref{estimate.min}) for solutions of~(\ref{eq12})
which minimize the
associated energy, and we argue in an arbitrary dimension $n$
(instead of taking $n=3$ as we are going to do in the next section).
It is worth noting that even if the estimate
has 
no dimensional constraint in the case of minimal solutions, this will not give one-dimensional symmetry of minimizers in further dimensions by applying
our method, unless one is willing to take additional
assumption on the energy growth of the solutions. Indeed, in order to prove Theorem~\ref{monotone} we will use Theorem~\ref{DllV}, which requires the energy in $C_R$ to grow like $R^2$.
	\\We consider $v$ as a bounded minimizer of the functional:
	\[
	\mathcal{E}_R(v)=\frac{1}{2}\int_{C_R}y^a\lvert\nabla v \rvert^2 \,dx\,dy + \int_{B_R\times \{y=0\}} G(u)\,dx,
	\]
	such that $v(x,0)=u(x)$. The function $v$ solves~(\ref{mainsis}), and the potential $G$ is such that $G'(u)=-f(u)$. In particular, the potential~$G$ is naturally
	defined up to an additive constant. To appropriately gauge such constant, we set
	\[
	c_u:=\min \{G(s) \,\,\text{s.t.}\,\, s\in[\text{inf}u,\text{sup}u]\}
	\]
	in order to replace~$G(u)$ with $G(u)-c_u$ and work with a positive potential.
	Moreover, we define $\tau$ as the minimum point of $G$: in this way,
	it holds that~$G(\tau)=c_s$.\\
	\\ As we are going to see, we can directly prove Theorem~\ref{globmin} using
a comparison argument with a suitable choice of the competitor.
	
	\begin{proof}[Proof of Theorem~\ref{globmin}]
		Since $v$ is a minimizer of $\mathcal{E}$, for every admissible competitor $w$ (i.e. $w=v$ on $\partial B_R\times (0,1)$) we have
		\[
		\mathcal{E}_R(v)\leq\mathcal{E}_R(w).
		\]
		We define 
		\begin{equation}
		\label{wdef}
		w(x,y):=\eta_R(x)\tau+(1-\eta_R(x))v(x,y),
		\end{equation}
		where $\eta_R:\R^n\to[0,1]$ is a smooth function that is equal to~$ 1$
		inside $B_{R-1}$ and that vanishes outside~$B_R$. In this way, $w$ is constantly equal to $\tau$ in the cylinder $B_{R-1}\times[0,1]$ and it is equal to $v(x,y)$ on the lateral boundary $\partial B_R\times[0,1]$, so $w$ is an admissible competitor.
		\\ Now we recall the fact that if $u$ is a bounded solution of~(\ref{eq12}), then from Proposition \ref{grad-estimate}
		\begin{equation}
		\label{bdedgr}
		\|\nabla_x v\|_ {L^\infty(\R^n\times[0,1])}\leq C
		\qquad{\mbox{ and }}\qquad
		\|y^a\partial_y v\|_{L^\infty(\R^n\times[0,1])}\leq C.
		\end{equation}

		Using~(\ref{bdedgr}) and the definition~\eqref{wdef} of $w$,  we can control the energy of $w$ as
		\begin{align*}
		\mathcal{E}_R(w)&\leq C\int_{C_R\setminus C_{R-1}}y^a \lvert\nabla v \rvert^2 \,dx\,dy +C \int_{C_R\setminus C_{R-1}}y^a\Big\{|v|^2+\tau^2\Big\}\,dy\\
&\qquad+\int_{(B_R\setminus B_{R-1})\times \{y=0\}}\{ G(w)-c_u\}\,dx\\
&\leq  CR^{n-1}\left(\int_0^1 y^a\,dy + \int_0^1 y^{-a}\,dy\right) + CR^{n-1}\leq C R^{n-1}.
		\end{align*}
This concludes the proof of Theorem~\ref{globmin}.
	\end{proof}

	\section{Energy estimates for monotone solutions}
	The main goal of this section is to prove
the energy estimate of Theorem~\ref{thmono} in dimension three for monotone solutions of~(\ref{eq12}).
We first give the following result, which is the counterpart of Corollary~6
in~\cite{DllV} in presence of a weight.
\begin{lemma}
	\label{mono.extension}
Let $v$ be a solution of~\eqref{mainsis} such that $\partial_{x_n}v(x,0)>0$ for any $x\in \R^n$.

Then, $\partial_{x_n}v(x,y)>0$ for any $(x,y) \in \R^n\times [0,1)$.
\end{lemma}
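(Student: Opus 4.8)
The plan is to exploit the fact that $w := \partial_{x_n} v$ satisfies the linearized version of the equation in \eqref{mainsis}, combined with the sign of its trace on $\{y=0\}$ and a maximum-principle / Hopf-lemma argument. First I would note that, differentiating the first and third equations of \eqref{mainsis} in the tangential direction $x_n$ (which is legitimate by the regularity provided by Proposition \ref{grad-estimate} and Corollary \ref{semilinear}, together with standard difference-quotient arguments), the function $w=\partial_{x_n}v$ solves
\begin{align*}
\begin{cases}
\mathrm{div}(y^a\nabla w)=0 \qquad &\text{in}\,\,\R^n\times (0,1)\\
-\displaystyle\lim_{y\rightarrow 0}y^a w_y=f'(u)\,w\qquad &\text{on}\,\,\R^n\times\{y=0\}\\
w_y(x,1)=0 \qquad &\text{on}\,\,\R^n\times \{y=1\},
\end{cases}
\end{align*}
and that by hypothesis $w(x,0)=\partial_{x_n}v(x,0)>0$ on $\R^n\times\{y=0\}$.

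Next I would run a sliding/maximum-principle argument to propagate this positivity into the slab. Fix a point $(x_0,y_0)\in\R^n\times(0,1)$ and work inside a cylinder $C_R(x_0)$; after an even reflection across $\{y=1\}$ (where the Neumann condition makes this licit and the weight $y^a$ is nondegenerate near $y=1$) we deal with a uniformly elliptic $A_2$-weighted equation on $\R^n\times(0,2)$ for which the Harnack inequality and strong maximum principle of \cite{FKS,FJK} (in the up-to-the-boundary form used already in the proof of Lemma \ref{stability}) apply. Since $w>0$ on the portion of the boundary $\{y=0\}$ and $w$ is continuous up to $\{y=0\}$, one first checks $w\ge 0$ in the slab: if $w$ were negative somewhere, consider its negative part; using the weighted equation together with the Robin-type boundary condition $-y^aw_y=f'(u)w$ on $\{y=0\}$ (on which $w^-\equiv 0$, so that boundary term is harmless) one gets a contradiction with the strong maximum principle exactly as in the coercivity/eigenvalue step of Lemma \ref{stability}. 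Alternatively, and perhaps more cleanly, one can invoke Lemma \ref{stability} itself: monotonicity of the trace makes $v$ stable with $d=-f'(u)$, so by that lemma there is a \emph{positive} $\varphi$ solving the same linearized system as $w$; then $\sigma:=w/\varphi$ satisfies $\mathrm{div}(y^a\varphi^2\nabla\sigma)=0$ in the slab with zero weighted Neumann data on $\{y=0\}\cup\{y=1\}$, and $\sigma\ge 0$ on $\{y=0\}$.

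Once $w\ge 0$ in $\R^n\times[0,1)$ is established, the strict inequality $w>0$ for $y\in(0,1)$ follows from the strong maximum principle for $\mathrm{div}(y^a\nabla\cdot)=0$: $w$ is a nonnegative solution that does not vanish identically (it is positive on $\{y=0\}$), so it cannot attain the value $0$ at an interior point of $\R^n\times(0,1)$; and the value $0$ cannot be attained on $\{y=1\}$ either, by the Hopf lemma for this weighted operator (the weight being nondegenerate at $y=1$) together with the Neumann condition $w_y(x,1)=0$, which would be violated at a zero of $w$ on the top boundary. This yields $\partial_{x_n}v(x,y)>0$ for all $(x,y)\in\R^n\times[0,1)$.

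The main obstacle I anticipate is the careful justification of the boundary behaviour at $\{y=0\}$, where the weight $y^a$ degenerates or blows up: one must make sense of the limit $\lim_{y\to0}y^aw_y$, know that it equals $f'(u)w$, and run the maximum principle up to this boundary. This is precisely where the regularity results of Section \ref{sec.regularity} (Proposition \ref{holder}, Corollary \ref{semilinear}, Proposition \ref{grad-estimate}) and the Hopf-type lemmas of \cite{CY1} enter, so I would lean on those rather than re-prove them; passing derivatives through via difference quotients is the routine but slightly delicate bookkeeping step. The route through Lemma \ref{stability} (producing the positive $\varphi$ and studying $\sigma=w/\varphi$) sidesteps part of this difficulty by reducing everything to a divergence-form equation with homogeneous conormal data, and is the argument I would ultimately write up.
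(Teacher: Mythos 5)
Your high-level plan is the right one and matches the paper: differentiate the system to get a linear problem for $w:=\partial_{x_n}v$, then propagate the sign of the trace into the slab by maximum-principle and Hopf-type arguments, exactly as the paper does by referring to the maximum principles of~\cite{CY1} and the proof of Lemma~5 in~\cite{DllV}. The ``easy'' half of your argument — once $w\ge 0$ holds, strict positivity on $\R^n\times[0,1)$ follows from the interior strong maximum principle and the Hopf lemma at $\{y=1\}$ after even reflection — is fine. The problem is the crucial step $w\ge 0$, and neither of your two routes actually closes it. The route through Lemma~\ref{stability} is circular: to deduce stability of $v$ (Remark~\ref{remark.mono}) and then extract a positive $\varphi$, you need $\partial_{x_n}v>0$ in the \emph{whole} slab, not just on the trace, because the ``easy'' direction of Lemma~\ref{stability} tests with $\eta^2/\varphi$ over $\R^n\times[0,1]$ and hence needs $\varphi>0$ there — and that is precisely what Lemma~\ref{mono.extension} is trying to establish as a prerequisite for Remark~\ref{remark.mono}. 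The ``negative part'' route also does not close on an unbounded slab: testing the equation for $w$ against $w^-\eta_R^2$ kills the $\{y=0\}$ and $\{y=1\}$ boundary terms as you note, but the resulting Caccioppoli-type inequality only gives $\int_{C_R}y^a|\nabla w^-|^2\eta_R^2\le C R^{n-2}\|w^-\|_\infty^2$, which does not tend to zero for $n\ge 2$; and the ``coercivity/eigenvalue step'' of Lemma~\ref{stability} that you cite as a template again presupposes stability, i.e.\ the circular ingredient.

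What is missing — and what the paper's appeal to Lemma~5 of~\cite{DllV} amounts to — is a compactness-modulo-translation argument to deal with the unboundedness of the slab. Suppose $\inf_{\R^n\times[0,1]}w=-\mu<0$ and take $(x_k,y_k)$ with $w(x_k,y_k)\to-\mu$. The uniform H\"older regularity of $w$ up to $\{y=0\}$ (Corollary~\ref{semilinear}, Proposition~\ref{grad-estimate}) together with $w(\cdot,0)>0$ forces $y_k$ to stay bounded away from $0$. Translating $w_k:=w(\cdot+x_k,\cdot)$ and extracting a locally uniform limit $w_\infty$, one obtains a bounded solution of the same linear equation in the slab which attains its negative infimum at an interior point of $\R^n\times(0,2)$ after even reflection across $\{y=1\}$. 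The strong maximum principle for the $A_2$-weighted operator then gives $w_\infty\equiv-\mu<0$, contradicting $w_\infty(x,0)=\lim_k w(x+x_k,0)\ge 0$. This compactness step (or, equivalently, the sliding comparison of $v(x',x_n+t,y)$ against $v(x,y)$ together with the same compactness) is the ingredient your sketch needs to be complete.
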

\begin{proof}
We start by observing that the weak and strong maximum principle hold for weak solutions of problem~\eqref{mainsis}. This follows exactly as in Remark 4.2 in~\cite{CY1}, with the only difference that now we have a Neumann condition on the bottom boundary $\{y=1\}$. In this part of the boundary it is then enough to apply Hopf's Lemma to a possible minimum of the solution $v$ to get the result.
With maximum principles at hand, the proof of the desired result follows exactly the proof of Lemma~5 in~\cite{DllV}.
\end{proof}

Let now $n=3$ and let $v$ be a solution of~\eqref{mainsis} whose trace $u$ on $\{y=0\}$ is monotone
in the last direction $x_3$. By Lemma~\ref{mono.extension},  $v$ is monotone in $x_3$ in the whole slab $\R^3\times [0,1]$, hence
we can define two limit profiles of~$v$
as
	\begin{equation}\label{def.barv}\begin{split}
	\overline{v}(x',y):=\lim_{x_3\to+\infty}v(x,y), \\
	\underline{v}(x',y):=\lim_{x_3\to-\infty}v(x,y),
	\end{split}\end{equation}
	where $x'=(x_1,x_2)$. Notice that $\overline{v}$ and~$\underline{v}$
are defined in~$\R^2\times [0,1]$, namely
we reduced the problem by one dimension by taking the limit in $x_3$. 
This fact allows us to deduce good energy estimates for both $\overline v$ and $\underline v$ which, in turn, implies the one-dimensional symmetry and the monotonicity of $\overline{u}$ and $\underline{u}$ on $\{y=0\}$. 
With these properties for $\underline v$ and $\overline v$, we are able to characterize the potential $G$ associated to equation~(\ref{eq12}) (see Lemma~\ref{aboutG} below).


Then, the proof of the energy estimates for monotone solutions follows by these two steps:
\begin{itemize}
\item if $v$ is a bounded monotone solution to~\eqref{mainsis}, then it is in particular a minimizer for the associated energy in a restricted class of functions $S_R$ (basically functions $\widetilde w$ such that $\underline v \leq \widetilde w\leq \overline v$);
\item the characterization of $G$ implies that the competitor $w$ constructed
in the previous section belongs to the class $S_R$.
\end{itemize}	
	
	Some of these results are well known in the classical case or for the fractional
	Laplacian. Here, we need to prove them for our water waves problem, which
	offers a series of specific complications also due to the fact that the Poisson
	Kernel is not explicit. For the sake of completeness, we are going to explain all the details in this section.

	Using Theorem~\ref{DllV}, we are able to prove some important properties of the two limit profiles. This also gives a characterization of the potential $G$ as a corollary.
	\begin{lemma}
		\label{barv}
		Let $f \in C^{1,\gamma}(\R)$ with $\gamma>\max\{0,-a\}$ and $v$ a bounded solution of~(\ref{mainsis}) whose trace $u$ on $\{y=0\}$ is monotone in $x_3$. 
		
Then $\underline{v}$ and $\overline{v}$ are bounded and stable solutions of~(\ref{mainsis}) with $n=2$, and each of them is either constant or one-dimensional and monotone in the $(x_1,x_2)$-plane.
	\end{lemma}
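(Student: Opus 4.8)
The plan is to realize $\overline v$ and $\underline v$ as limits of translates of $v$, transfer the equation and the stability inequality to these limits, and then apply Corollary~\ref{2D}. \emph{Step 1 (the limit profiles solve the $2$D problem).} First I would use Lemma~\ref{mono.extension} to upgrade the monotonicity of the trace to $\partial_{x_3}v>0$ in the whole slab $\R^3\times[0,1)$ (the case $\partial_{x_3}v\equiv0$ being trivial). Then for each $x'$ and $y$ the map $x_3\mapsto v(x',x_3,y)$ is bounded and monotone, so the profiles in~\eqref{def.barv} are well defined and bounded by $\|v\|_{L^\infty}$. Consider the translates $v^k(x',x_3,y):=v(x',x_3+k,y)$, which by translation invariance also solve~\eqref{mainsis}. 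By Proposition~\ref{grad-estimate} and Corollary~\ref{semilinear}, the family $\{v^k\}_k$ is bounded in $H^1_{\mathrm{loc}}(\R^3\times[0,1],y^a)$ and equi-H\"older on compact subsets of $\R^3\times[0,1]$, including the degenerate boundary $\{y=0\}\cup\{y=1\}$; hence $v^k\to\overline v$ locally uniformly and weakly in $H^1_{\mathrm{loc}}(y^a)$, with $f(v^k(\cdot,0))\to f(\overline u)$ locally uniformly. Passing to the limit in the weak formulation of~\eqref{mainsis}, and using once more the regularity of Section~\ref{sec.regularity}, we obtain that $\overline v$ is a classical solution of~\eqref{mainsis}; since the convergence takes place along the monotonicity direction $x_3$, the limit is independent of $x_3$, so $\overline v=\overline v(x',y)$ solves~\eqref{mainsis} with $n=2$. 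The argument for $\underline v$ is identical with $k\to-\infty$.

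\emph{Step 2 (stability of the limit profiles).} By Remark~\ref{remark.mono}, applied with $d=-f'(u)$ and the positive solution $\varphi=\partial_{x_3}v$ of the linearized problem, $v$ is stable; by translation invariance each $v^k$ is stable, so for every $\zeta\in C_0^1(\R^3\times[0,1])$
\[
\int_{\R^3\times(0,1)}y^a|\nabla\zeta|^2\,dx\,dy-\int_{\R^3\times\{y=0\}}f'\big(u(x',x_3+k)\big)\,\zeta^2\,dx\ge0.
\]
I would then choose $\zeta(x',x_3,y)=\xi(x',y)\,\psi_T(x_3)$ with $\xi\in C_0^1(\R^2\times[0,1])$ and $\psi_T$ a cut-off equal to $1$ on $[-T,T]$, vanishing outside $[-2T,2T]$, with $|\psi_T'|\le C/T$. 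Letting $k\to+\infty$ (using the local uniform convergence $u(\cdot,\cdot+k)\to\overline u$ together with dominated convergence), then dividing by $\int_\R\psi_T^2\ge2T$ and letting $T\to+\infty$, the term produced by $\partial_{x_3}\zeta$ is bounded after the division by $\tfrac{C}{T^2}\int_{\R^2\times(0,1)}y^a\xi^2$ and disappears, leaving
\[
\int_{\R^2\times(0,1)}y^a|\nabla\xi|^2\,dx\,dy-\int_{\R^2\times\{y=0\}}f'(\overline u)\,\xi^2\,dx\ge0,
\]
that is, $\overline v$ is a stable solution of~\eqref{mainsis} with $n=2$ in the sense of Definition~\ref{STAB}; likewise for $\underline v$.

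\emph{Step 3 (symmetry, monotonicity, and the main difficulty).} Since $\overline v$ is a bounded stable solution of~\eqref{mainsis} with $n=2$, Corollary~\ref{2D} (equivalently Theorem~\ref{DllV}, whose hypothesis~\eqref{EB} holds for $\overline v$ by Proposition~\ref{grad-estimate}) gives $\overline v(x',y)=\overline v_0(\omega\cdot x',y)$ for some $\omega\in S^1$, together with the alternative $\overline u_0'>0$ or $\overline u_0'\equiv0$. In the first case $\overline v$ is one-dimensional with trace monotone in the $(x_1,x_2)$-plane; in the second case the proof of Theorem~\ref{DllV} gives $\nabla_{x'}\overline v\equiv0$, so $\overline v=\overline v(y)$ solves $(y^a\overline v')'=0$ with $\overline v'(1)=0$, forcing $\overline v$ to be constant. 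The same dichotomy applies to $\underline v$, which finishes the proof. The step I expect to be the most delicate is Step~2: the cut-off argument must simultaneously annihilate the $x_3$-derivative contribution and allow passage to the limit in the nonlinear term; moreover both Steps~1 and~2 rely essentially on the up-to-the-boundary regularity and global gradient bounds of Section~\ref{sec.regularity}, needed precisely because the weight $y^a$ degenerates (or blows up) at $\{y=0\}$.
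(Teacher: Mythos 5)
Your proposal is correct and follows essentially the same path as the paper: you realize $\overline v$ and $\underline v$ as locally uniform limits of translates using the regularity theory of Section~\ref{sec.regularity}, transfer stability by plugging a product test function $\eta(x',y)\phi(x_3)$ into the three-dimensional stability inequality and passing to the limit, and conclude by Theorem~\ref{DllV} (applicable since $|\nabla_x\overline v|\in L^\infty$ gives~\eqref{EB}). The only cosmetic difference is in the cutoff: the paper uses a single family $\phi_\rho$, vanishing outside $(\rho,2\rho+2)$ and equal to $1$ on $(\rho+1,2\rho+1)$, which translates and widens simultaneously as $\rho\to\infty$, whereas you separate the translation parameter $k$ from the plateau width $T$ and take the two limits sequentially; both devices kill the $x_3$-derivative contribution and replace $f'(u)$ by $f'(\overline u)$ in the limit.
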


{{F}rom} Lemma~\ref{barv}, one also obtains:

	\begin{corollary}
		\label{corG}
		Set $m=\inf\underline{u}\leq\widetilde{m}=\sup\underline{u}$ and $\widetilde{M}=\inf\overline{u}\leq M=\sup\overline{u}$ . Then $G>G(m)=G(\widetilde{m})$ in $(m,\widetilde{m})$, $G'(m)=G'(\widetilde{m})=0$ and $G>G(M)=G(\widetilde{M})$ in $(M,\widetilde{M})$, $G'(M)=G'(\widetilde{M})=0$.
	\end{corollary}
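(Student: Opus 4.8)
I shall prove the assertions involving $m$ and $\widetilde m$; those involving $M$ and $\widetilde M$ follow by running the same argument with $\overline v$ in place of $\underline v$. By Lemma~\ref{barv}, $\underline v$ is a bounded stable solution of~\eqref{mainsis} with $n=2$ that is either constant in $(x_1,x_2)$ or one-dimensional and monotone. If $\underline v$ is constant in $x'$, then $\mathrm{div}(y^a\nabla\underline v)=0$ and the Neumann condition on $\{y=1\}$ force $\underline v$ to be a genuine constant; thus $m=\widetilde m$, the interval $(m,\widetilde m)$ is empty, and the reaction condition on $\{y=0\}$ gives $0=f(m)=-G'(m)$, so nothing else is claimed. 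Otherwise $\underline v(x',y)=\underline v_0(\omega'\cdot x',y)$ for some $\omega'\in S^1$, and since $\underline v_0$ is not constant its trace $\underline u_0(t):=\underline v_0(t,0)$ is strictly monotone (by Theorem~\ref{DllV} and Lemma~\ref{mono.extension}); after orienting $\omega'$ we may take $\underline u_0$ strictly increasing, so $\underline u_0(\R)=(m,\widetilde m)$, with $\underline u_0(t)\to m$ as $t\to-\infty$ and $\underline u_0(t)\to\widetilde m$ as $t\to+\infty$.

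The relations $G'(m)=G'(\widetilde m)=0$ and $G(m)=G(\widetilde m)$ are obtained by passing to the limit in the profile. Using the global bounds of Proposition~\ref{grad-estimate} and the regularity/compactness of Section~\ref{sec.regularity}, the translates $\underline v_0(\,\cdot+t_k,\,\cdot\,)$ subconverge, along $t_k\to\pm\infty$, to a function independent of $t$ solving $(y^ah')'=0$ in $(0,1)$, $h'(1)=0$, $-\lim_{y\to0}y^ah'=f(h)$; exactly as in the constant case this limit equals the constant $m$, resp.\ $\widetilde m$, and $f(m)=f(\widetilde m)=0$. For the equality of the potential at the two endpoints I would establish the Hamiltonian-type identity
\[
\int_0^1\frac{y^a}{2}\Bigl(|\partial_y\underline v_0(t,y)|^2-|\partial_t\underline v_0(t,y)|^2\Bigr)\,dy+G(\underline u_0(t))=\text{const},
\]
proved by differentiating the left-hand side in $t$ and integrating by parts in $y$: the boundary term at $y=1$ vanishes by the Neumann condition and the one at $y=0$ equals $-\tfrac{d}{dt}G(\underline u_0(t))$. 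Letting $t\to\pm\infty$, the two derivatives tend to $0$ in $L^2(y^a\,dy)$ (by the convergence just established, dominated by $Cy^{\pm a}\in L^1(0,1)$), so the constant equals $G(m)=G(\widetilde m)$.

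To see that $G\ge G(m)$ on $[m,\widetilde m]$ I would argue by comparison, using the restricted minimality of monotone solutions (Lemma~\ref{minim}): $\underline v_0$ minimizes $\mathcal E_R$ among competitors that agree with it on $\partial B_R\times(0,1)$ and take values between its limit profiles $m$ and $\widetilde m$. Put $c:=\min_{[m,\widetilde m]}G$ and suppose $c<G(m)$, so $c=G(s_0)$ for some $s_0\in(m,\widetilde m)$. The competitor $w_R:=\eta_R s_0+(1-\eta_R)\underline v_0$, with $\eta_R$ equal to $1$ on $B_{R-1}$ and supported in $B_R$, is admissible in this restricted class, and since its trace on $B_{R-1}$ is the constant $s_0$ (on which the potential term vanishes) one gets $\mathcal E_R(w_R)\le CR$; on the other hand $G(\underline u_0(t))-c\to G(m)-c>0$ as $t\to\pm\infty$, so $\mathcal E_R(\underline v_0)\ge\int_{B_R\times\{y=0\}}(G(\underline u_0)-c)\ge cR^2$ for $R$ large, contradicting minimality. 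Hence $c=G(m)$.

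The remaining point — the strict inequality $G>G(m)$ on the open interval $(m,\widetilde m)$ — is, I expect, the main obstacle. If $G(s_1)=G(m)$ for some $s_1\in(m,\widetilde m)$, then $s_1$ is a minimum point of $G$, so $f(s_1)=0$, and the plateau competitor at level $s_1$ merely gives $\mathcal E_R(\underline v_0)\le CR$, which is not itself contradictory (a layer has linear energy growth in $\R^2$); excluding such an $s_1$ uses in an essential way that $\underline v_0$ is \emph{monotone}, not just a minimizer. Following the scheme of~\cite{CC1,CC2}, one combines the Hamiltonian identity above with a Modica-type energy estimate for minimizers in the slab (or, equivalently, a comparison argument localized near the slice where $G(\underline u_0)$ would attain its minimum) to conclude that $\partial_t\underline v_0$ would vanish identically on that slice, contradicting $\underline u_0'>0$. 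The adaptation of this step to the weighted slab, where no explicit Poisson kernel is available and the relevant regularity and energy bounds must be rederived, is the technically heaviest part. Applying the whole argument to $\overline v$ in place of $\underline v$ gives $G>G(M)=G(\widetilde M)$ on $(\widetilde M,M)$ and $G'(M)=G'(\widetilde M)=0$, which completes the proof of the Corollary.
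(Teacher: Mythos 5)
Your route re-derives, for the specific profiles $\underline v_0$ and $\overline v_0$, the content of Lemma~\ref{aboutG}, which the paper has already established and which its proof of Corollary~\ref{corG} invokes as a black box: the authors simply rescale $\overline v$ (resp.\ $\underline v$) by the affine change $\overline w = 2(\overline v-\widetilde M)/(M-\widetilde M)-1$ so that the limit values become $\pm1$, observe that $\overline w$ is a one-dimensional monotone layer solution for the rescaled nonlinearity $h=2f/(M-\widetilde M)$, and invoke Lemma~\ref{aboutG} to conclude. You do not cite Lemma~\ref{aboutG} at all. Your alternative route, if carried through, would be legitimate, and several of its pieces (the limiting argument for $f(m)=f(\widetilde m)=0$, the Hamiltonian identity, the cut-off comparison giving $G\geq G(m)$) are essentially sound, modulo small bookkeeping: the growth rates $CR$ and $cR^2$ in your comparison argument are the $n=2$ exponents, but $\underline v_0$ lives over $\R\times(0,1)$ so they should be $C$ and $cR$ respectively; and the restricted minimality you invoke is Lemma~\ref{minim}, stated for $n=3$ — its one-dimensional analogue is what you actually need, and while the adaptation is straightforward it is unstated.

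The genuine gap is the strict inequality $G>G(m)$ in the open interval $(m,\widetilde m)$, which you acknowledge and only sketch, and the sketch as written does not work. If $G(\underline u_0(t_1))=G(m)$ at some $t_1$, the Hamiltonian identity gives
\[
\int_0^1\frac{y^a}{2}\Bigl(\lvert\partial_y\underline v_0(t_1,y)\rvert^2-\lvert\partial_t\underline v_0(t_1,y)\rvert^2\Bigr)\,dy=0,
\]
which is a cancellation between two terms of opposite sign, not the vanishing of $\partial_t\underline v_0$ on that slice; nothing about $\underline u_0'(t_1)$ follows. The paper's proof of this point (inside Lemma~\ref{aboutG}) does not argue slice by slice at $y=0$: it introduces the two-variable auxiliary function $\varphi(x,y)=G(u(x,0))-G(1)-\eta(x,y)$, with $\eta(x,y)=\int_0^x\frac{t^a}{2}(v_y^2-v_x^2)(t,y)\,dt$, computes $\mathrm{div}(y^{\pm a}\nabla\varphi)$ and obtains a differential inequality valid in the full slab (with separate treatment for $a\geq 0$ and $a<0$), and then applies the maximum principle and the Hopf lemma to deduce $\varphi>0$ on $\R\times[0,1)$. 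This genuinely two-dimensional argument in the extension is what forces strictness; restricting to the Hamiltonian identity at $y=0$ cannot.
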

	\begin{proof}[Proof of Lemma~\ref{barv}]
		We prove the desired result for $\overline{v}$, clearly the same proof can be replied for $\underline{v}$.
		
		The fact that $\overline{v}$ is a solution follows from seeing it as the limit of a sequence of functions in four variables, that is $\overline{v}(x',y)=\lim_{t\to\infty}v^t(x',x_3,y)$ where $v^t(x',x_3,y)=v(x',x_3 +t,y)$. By Corollary~\ref{semilinear}, we have that $v^t$ uniformly converges up to subsequences to $\overline{v}$ in the $C^\beta$ sense on compact sets.
		\\ Now we want to prove that
		\begin{equation}
		\label{stabbarv}
		\overline{v} \quad \text{is stable}
		\end{equation}
		and then apply Theorem~\ref{DllV}. By Remark~\ref{remark.mono}, we have that if $v$ is a monotone solution of~(\ref{mainsis}) in dimension $n=3$, then $v$ is stable in $\R^3\times (0,1)$, hence
		\begin{equation}
		\label{stabv}
		\int_{\R^3\times (0,1)}y^a\lvert\nabla\xi\rvert^2+\int_{\R^3\times \{y=\}}f'(u)\xi^2\geq0,
		\end{equation}
		for all $\xi\in C_0^\infty(\R^3\times (0,1))$.
		Following an idea in~\cite{CC1}, we define a special test function $\xi$ in order to get the stability inequality for $\overline{v}$. We take $\rho>0$ and a function $\phi_\rho\in C_0^\infty(\R,[0,1])$ such that $\phi_\rho=0$ in $(-\infty,\rho)\cup(2\rho+2,+\infty)$ and $\phi_\rho=1$ in $(\rho+1,2\rho+1)$. For every $\eta\in C_0^\infty(\R^2 \times (0,1))$ we define $\xi(x,y)=\eta(x',y)\phi_\rho(x_3)$. So~(\ref{stabv}) becomes, after dividing it by $\alpha_\rho=\int_{\R}\phi_\rho^2$:
		\begin{align*}
		&\int_{\R^2\times (0,1)}y^a\lvert\nabla\eta\rvert^2\,dx'\,dy+\int_{\R^2\times (0,1)}y^a\eta^2\,dx'\,dy\int_{\R}\frac{(\phi'_\rho)^2}{\alpha_\rho}\,dx_3+\\&-\int_{\R^2\times \{y=0\}}\eta^2\,dx'\,dy\int_{\R}f'(v)\frac{\phi_\rho^2}{\alpha_\rho}\,dx_3\geq0.
		\end{align*}
		When $\rho\to+\infty$ the second term vanishes, because of the definition of $\phi_\rho$. In the third term, thanks to the fact that $f\in C^1(\R)$, we have that $f'(v)\to f'(\overline{v})$, hence
		\[
		\int_{\R^2\times (0,1)}y^a\lvert\nabla\eta\rvert^2-\int_{\R^2\times \{y=0\}} f'(\overline{v})\eta^2\geq0.
		\]
		So we proved~(\ref{stabbarv}).
		In order to conclude that $\overline v$ is one-dimensional and monotone in the $(x_1,x_2)$-plane it is enough to apply Theorem~\ref{DllV}, after observing that, from Proposition \ref{grad-estimate}, $|\nabla_x \overline v|\in L^\infty(\R^2\times [0,1])$ and hence assumption~\eqref{EB} is satisfied.
	\end{proof}

	Before proving Corollary~\ref{corG}, we define the notion of \textit{layer} solution of~(\ref{eq12}) and we give a sufficient condition for the potential $G$ to have a double-well structure.
	\begin{definition}\label{LAYER}
		We say that $v$ is a layer solution for~(\ref{mainsis}) if it satisfies~(\ref{mainsis}),
		\[
		v_{x_n}(x,0)>0 \qquad {\mbox{for all }}x \in \R^n
		\]
		and
		\[
		\lim_{x_n\to\pm\infty}v(x,0)=\pm1 \qquad \text{for every}\,\,x'\in\R^{n-1},
		\]
		where $x=(x',x_n)\in\R^n$.
	\end{definition}
	The following result gives a necessary condition to the existence of layer solutions of~(\ref{mainsis}) with $n=1$.
	\begin{lemma}
		\label{aboutG}
		Let $f\in C^{1,\gamma}(\R)$, with $\gamma>\max\{0,-a\}$ and $G'=f$. Let $v$ be a bounded layer solution of
		\begin{align}
		\label{1dimsis}
		\begin{cases}
		\mathrm{div}(y^a\nabla  v) = 0  &\mbox{ in}\;\;\R^n\times(0,1)\\
		-y^av_y(x,0)=f(v)  &{\mbox{ on}}\;\; \R^n\times \{ y=0\}\\
		v_y(x,1)=0 &{\mbox{ on}}\;\; \R^n\times \{ y=1\}.
		\end{cases}
		\end{align}
		Then
		\begin{equation}\label{DWPX}
		G'(1)=G'(-1)=0. 
		\end{equation}
		
		Moreover, if $n=1$, we also have
		
		\begin{equation}\label{DWPX1}
	G>G(1)=G(-1) \quad \text{in}\,\,(-1,1).
		\end{equation}
		
	\end{lemma}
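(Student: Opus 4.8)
The plan is to establish the two assertions of the lemma separately --- the identity~\eqref{DWPX} in arbitrary dimension $n$, and then~\eqref{DWPX1} in the one-dimensional case. For~\eqref{DWPX} I would argue through the limit profiles. Since $v$ is a layer solution we have $v_{x_n}(x,0)>0$, so by Lemma~\ref{mono.extension} $v$ is increasing in $x_n$ throughout $\R^n\times[0,1)$, and hence $\overline v(x',y):=\lim_{x_n\to+\infty}v(x,y)$ exists for every $(x',y)$. By the uniform $C^\beta$ bounds of Corollary~\ref{semilinear} applied to the $x_n$-translates of $v$, this convergence takes place in $C^\beta_{\mathrm{loc}}$, so exactly as in the proof of Lemma~\ref{barv} the function $\overline v$ is a bounded solution of~\eqref{mainsis} in $\R^{n-1}\times(0,1)$; moreover, by the layer condition, its trace is the constant $\overline u\equiv1$. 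Since the linear problem $\mathrm{div}(y^a\nabla\psi)=0$ in the slab with $\psi=0$ on $\{y=0\}$ and $\partial_y\psi=0$ on $\{y=1\}$ has $\psi\equiv0$ as its only bounded solution, it follows that $\overline v\equiv1$; inserting this into the Neumann condition at $\{y=0\}$ in~\eqref{mainsis} gives $0=-\lim_{y\to0}y^a\partial_y\overline v=f(1)$, i.e. $G'(1)=0$, and letting $x_n\to-\infty$ gives $G'(-1)=0$ in the same way.

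Now suppose $n=1$. The first step is to observe that a one-dimensional layer has bounded energy: being monotone, $v$ is a minimizer of $\mathcal E_R$ in the class of competitors lying between its limit profiles $\underline v\equiv-1$ and $\overline v\equiv1$ (cf. Lemma~\ref{minim}), and the competitor $w=\eta_R\tau+(1-\eta_R)v$ of Theorem~\ref{globmin}, with $\tau\in[\inf u,\sup u]=[-1,1]$ the minimum point of $G$, lies in this class; hence the comparison argument of Theorem~\ref{globmin} gives $\mathcal E_R(v)\le\mathcal E_R(w)\le CR^{n-1}=C$ for every $R\ge2$. It follows that $\int_{\R}\bigl(G(u(x))-c_u\bigr)\,dx<\infty$, with $c_u=\min_{[-1,1]}G$; since $G(u(x))-c_u\ge0$ and $u(x)\to\pm1$ as $x\to\pm\infty$, the integrand must tend to $0$, which forces $G(1)=G(-1)=c_u=\min_{[-1,1]}G$, and in particular $G\ge G(1)$ on $(-1,1)$.

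To upgrade this to the strict inequality in~\eqref{DWPX1} I would use the Hamiltonian identity for the one-dimensional extension problem. Writing $v=v(x,y)$ with $x\in\R$, a direct integration by parts in $y$ based on $\mathrm{div}(y^a\nabla v)=0$, the boundary conditions in~\eqref{mainsis}, and $G'=-f$, shows that $x\mapsto\int_0^1\frac{y^a}{2}\bigl(v_x^2-v_y^2\bigr)(x,y)\,dy-G(v(x,0))$ is constant in $x$; letting $x\to+\infty$, where $v_x$, $v_y$ and $y^a v_y$ all tend to $0$ by Proposition~\ref{grad-estimate} so that the integral vanishes by dominated convergence, identifies this constant as $-G(1)$, whence
\[
G(v(x,0))-G(1)=\int_0^1\frac{y^a}{2}\bigl(v_x(x,y)^2-v_y(x,y)^2\bigr)\,dy\qquad\text{for every }x\in\R.
\]
Given $t\in(-1,1)$, strict monotonicity of the trace provides a unique $x_t$ with $v(x_t,0)=t$; if $G(t)=G(1)$, then the right-hand side vanishes at $x=x_t$ and $t$ is an interior minimum of $G$, so $f(t)=0$ and the constant $t$ is itself a solution of~\eqref{mainsis}. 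Comparing $v$ with this constant solution near the level set $\{v=t\}$ --- using that $v$ is a minimizer together with the strong maximum principle --- should force $v\equiv t$, contradicting $v_x>0$ on $\R\times[0,1)$ (Lemma~\ref{mono.extension}), and this yields $G(t)>G(1)$.

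I expect this last step --- excluding an interior minimum of $G$ at the value $G(\pm1)$ --- to be the main obstacle. Unlike the purely ODE situation, where the Hamiltonian reads $\tfrac12(v')^2=G(v)-G(1)$ with the left-hand side manifestly nonnegative and strictly positive at every finite point, here the quantity $\int_0^1 y^a(v_x^2-v_y^2)\,dy$ has no a priori sign, so strictness has to be extracted from minimality and the maximum principle rather than from the conserved quantity alone; the remaining ingredients (the limit-profile computation and the bounded-energy estimate) are routine adaptations of Lemma~\ref{barv} and of the comparison argument in Theorem~\ref{globmin}.
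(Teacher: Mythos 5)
Your decomposition of the lemma is reasonable, and two of the three steps are essentially sound even though they depart from the paper's route; the third is genuinely incomplete, as you yourself flag.

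For $G'(\pm1)=0$ in general dimension, the paper does not pass through the limit profile; instead it tests $\mathrm{div}(y^a\nabla v^t)=0$ against a rescaled cutoff $\eta_R$ over $C_R$, integrates by parts twice, observes that as $t\to\infty$ one boundary term converges to $f(1)R^n\int_{B_1}\eta$ while the other two are $O(R^{n-1})$, and sends $R\to\infty$. Your alternative via $\overline v$ is clean in $n=1$ (where $\overline v$ solves an ODE and is trivially constant), but for $n\ge2$ you rely on the assertion that the only bounded solution of $\mathrm{div}(y^a\nabla\psi)=0$ in the slab with $\psi=0$ on $\{y=0\}$ and $\partial_y\psi=0$ on $\{y=1\}$ is $\psi\equiv0$; this is a Liouville statement you would need to prove, not a consequence of anything already in the paper. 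The paper's test-function route avoids this issue entirely.

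For $G(1)=G(-1)$ and $G\ge G(1)$, your bounded-energy argument (minimality in the class of competitors between $\underline v$ and $\overline v$, à la Lemma~\ref{minim} adapted to $n=1$, then $\int_\R(G(u)-c_u)<\infty$ forces $G(\pm1)=c_u$) is a legitimate alternative to the paper's approach, which obtains the same identities directly from the Hamiltonian identity by letting $x\to\pm\infty$. One small but real inaccuracy in your write-up: you claim $v_x$, $v_y$, $y^av_y$ tend to $0$ as $x\to+\infty$ "by Proposition~\ref{grad-estimate}." That proposition gives only \emph{boundedness}, not decay. The decay is a separate fact, which the paper establishes by a compactness/contradiction argument (uniform $C^\beta$ bounds on the translates $v^{t_m}$, a convergent subsequence tending to the constant $1$, contradiction with a hypothetical $\varepsilon$ lower bound). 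You need that argument in your step too, since your dominated-convergence claim rests on it.

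The genuine gap is the strict inequality $G>G(1)$ on $(-1,1)$. You correctly observe that the Hamiltonian identity alone is insufficient here because $\int_0^1 y^a(v_x^2-v_y^2)\,dy$ has no sign, but your proposed remedy --- "comparing $v$ with the constant solution $t$ near $\{v=t\}$, using minimality and the strong maximum principle, should force $v\equiv t$" --- is not an argument. The constant $t$ and $v$ coincide only at the single point $(x_t,0)$ and have different lateral behavior, so the strong maximum principle does not apply to $v-t$, and a cut-and-paste minimality comparison with a constant solution located at an interior minimum of $G$ does not obviously produce a contradiction. The paper's strictness proof is substantially more involved: it constructs an auxiliary function $\varphi(x,y)$ extending $G(u(x))-G(1)$ into the slab, computes $\mathrm{div}(y^a\nabla\varphi)=-ay^{2a-1}v_x^2$, and then excludes a nonpositive value of $\varphi$ by a maximum-principle/Hopf argument, with separate cases $a\ge0$ and $a<0$ (the latter requiring a change of weight to $y^{-a}$ and a first-order correction to obtain a uniformly elliptic operator). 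None of this is replaced by your sketch, so the strictness assertion remains unproved in your proposal.
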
	

Potentials satisfying~\eqref{DWPX},~\eqref{DWPX1} are called
``double-well potentials''.

	\begin{proof}[Proof of Lemma~\ref{aboutG}]
	The proof combines some ideas contained in the proofs of Lemmas 4.8 and 5.3 in~\cite{CY1}.
	
		First we prove that $G'(1)=G'(-1)=0$, which holds in any dimension $n$.  We take $\eta$ smooth and nonnegative with compact support in $B_1\times[0,1)$ and with strictly
positive integral over $B_1$. For $R>0$ we define $\eta_R:=\eta(\frac{x}{R},y)$. We slide the function $v$ in the $x_n$ direction by considering
		\[
		v^t(x,y)=v(x',x_n+t,y),
		\]
		which is also a solution of~(\ref{1dimsis}). So we have:
		\begin{align*}
		0&=\int_{ C_R}\mathrm{div}(y^a\nabla  v^t) \eta_R = \int_{B_R\times\{y=0\}}f(u^t)\eta_R-\int_{C_R}y^a\nabla v^t\cdot\nabla\eta_R\\
		&= \int_{ B_R\times \{y=0\}}f(u^t)\eta_R- \int_{ B_R\times\{y=0\}} y^au^t\partial_y\eta_R+\int_{ C_R}v^t\mathrm{div}(y^a\nabla\eta_R).
		\end{align*}
		We have that the first integral converges as $t\to\infty$ to $f(1)R^n\int_{B_1}\eta$ and the other
two integrals
are bounded by $CR^{n-1}$. Hence $\lvert f(1)\rvert\leq\frac{C}{R}$ for every $R$ and we get $f(1)=0$ by letting $R\to\infty$. In the same way we can prove that $G'(-1)=0$.

	We prove now the second part of the statement.	Let  $n=1$; we  claim that
		\begin{equation}
		\label{1.11}
		\int_{0}^{1}\frac{t^a}{2}\big(v_x^2(x,t)-v_y^2(x,t)\big)\,dt=G(u(x,0))-G(1).
		\end{equation}
		First, we define the function $w$ as:
		\begin{equation*}
		w(x)=\int_{0}^{1}\frac{t^a}{2}\big(v_x^2-v_y^2\big)(x,t)\,dt.
		\end{equation*}
		We remark that $w$ is well defined and bounded thanks to~(\ref{gradient1}). In addition, Proposition \ref{grad-estimate} allows us to derive under the integral sign in the definition of $w(x)$, so we can compute the derivative of $w$ as
		\begin{equation}\label{A}
		\begin{split}
			\partial_x w(x)&=\int_{0}^{1}t^a(v_xv_{xx}-v_yv_{xy})(x,t)\,dt\\&=\displaystyle\lim_{y\rightarrow 0}y^a v_y(x,y)v_x(x,y)=\frac{d}{dx}G(u(x,0)),
		\end{split}
		\end{equation}
		where we have used an integration by parts and the fact that $v$ is a solution of~(\ref{1dimsis}).
		Using~\eqref{A}, we obtain that
		\begin{equation}\label{B}
		w(x)-[G(v(x,0))-G(1)]=C\end{equation}
		for some constant $C$. Our next goal is proving that $C=0$. 
		%

		To this end, first we point out the estimate
		\[\lvert w(x)\rvert \leq C\int_{0}^{1}t^a\lvert\nabla v(x,t)\rvert^2\,dt.
		\]
		We prove now that for every fixed $R>0$
		\begin{equation}
		\label{63}
		\lVert\nabla_x v\rVert_{L^\infty(C_R(x,0))}+\lVert y^a\partial_y v\rVert_{L^\infty(C_R(x,0))}\longrightarrow 0 \qquad\text{as}\,\,x\to+\infty,
		\end{equation}
		where $C_R(x,0)=B_R(x,0)\times(0,1)\subset \R\times(0,1)$.
		
		Suppose by contradiction that there exist $R>0$, $y\in \R$, $\e>0$ and a sequence $t_m\to+\infty$ such that:
		\begin{equation}
		\label{nota}
		\lVert\nabla_x v\rVert_{L^\infty(C_R(x,0))}+\lVert y^a\partial_y v\rVert_{L^\infty(C_R(x,0))}\geq\e \qquad \text{for every}\,\,m.
		\end{equation}
		Notice that $v^{t_m}$ is a solution of~(\ref{1dimsis}) for every $m$.
		Also, the sequence  $v^{t_m}$
		is uniformly bounded. Consequently, we obtain $C^\beta(\overline{C_S})$
		estimates for $v^{t_m}$ from Proposition \ref{holder}, and we stress that these estimates are uniform in $m$ for every $S>0$. {{F}rom} this fact we have that, up to subsequences, $v^{t_m}$ converges to a bounded function $v\in C^\beta_\text{loc}(\R\times[0,1])$ such that
		\[\mathrm{div}(y^a\nabla v)=0.
		\]
		Since $v\equiv1$, we get a contradiction with~(\ref{nota}) and we obtain~(\ref{63}). \\
		Letting now $x \rightarrow +\infty$ and using \eqref{63}, we deduce that $C=0$. Moreover taking the limit
		for $x \rightarrow -\infty$, we also have that
		\begin{equation}
		\label{G1}
		G(1)=G(-1).
		\end{equation}
		Now, we are left with proving that
		\begin{equation*}
		G>G(1) \qquad \text{in}\,\,(-1,1).
		\end{equation*}
		In order to do that, we want to prove that for every $x\in[0,1)$
		\begin{equation}
		\label{1.10}
		\int_{0}^{x}\frac{t^a}{2}\big(v_y^2(t,y)-v_x^2(t,y)\big)\,dt<G(u(0,y))-G(1).
		\end{equation}
		We define for every $y\in\R$ and $x\in[0,1]$
		\[
		\eta(x,y):=\int_{0}^{x}\frac{t^a}{2}\big(v_y^2(t,y)-v_x^2(t,y)\big)\,dt
		\]
		and
		\[\varphi(x,y):=G(u(x,0))-G(1)-\eta(x,y).\]
		First, we observe that $\varphi$ can not be constant. Indeed, since $\eta(x,0)=0$, the fact that $\varphi$ is constant would imply that $G$ is also constant. This would give $f\equiv0$ and so $u$ would be constant, in contradiction with the monotonicity property $u_x>0$.
		\\Using the fact that $\mathrm{div}(y^a\nabla v)=0$, we can compute the derivatives of $\varphi$ as 
		\begin{eqnarray*}
		&&\partial_y \varphi(x,y)=-\frac{y^a}{2}(v_x^2-v_y^2)
		\\{\mbox{and }}&&
		\partial_x\varphi(x,y)=y^av_x(x,y)v_y(x,y).
		\end{eqnarray*}
		Hence, after some computations (see also the proof of Lemma 5.3 in \cite{CY1}) we see that $\varphi(x,y)$ is bounded and satisfies \begin{equation}
		\label{div_a}
		\mathrm{div}(y^a\nabla\varphi(x,y))=-ay^{2a-1}v_x^2(x,y)
		\end{equation} 
		in $\R\times(0,1)$. Our last claim is that
		\begin{equation}\label{C}
		{\mbox{$\varphi$ is strictly positive on $\R\times[0,1)$.}}\end{equation}
		Notice that this claim
		implies~(\ref{1.10}). In order to prove~\eqref{C},
		we assume by contradiction that there exists $(x_0,y_0)$ in $\R\times[0,1)$ such that $\varphi(x_0,y_0)\leq0$. 
		
		Let us divide the proof in two cases, considering at a first attempt $a\geq0$. {{F}rom}~(\ref{div_a}) it follows that
		\begin{equation}
		\label{a.positive}
		\mathrm{div}(y^a\nabla\varphi(x,y))\leq0
		\end{equation} 
		and, using also the Hopf Lemma 4.11 in~\cite{CY1}, we can say that $y_0=0$. Hence there exists $x_0\in\R$ such that
		\[
		G(u(x_0,0))-G(1)\leq0.
		\]
		But $\psi(x)=G(u(x,0))-G(1)$ goes to zero when $x\to\pm\infty$, so we can take $x_0$ as a
		global minimum for $\psi$. It follows that
		\[
		0=\frac{d}{dx}G(u(x_0,0))=\lim_{y\to0}y^av_y(x_0,y)v_x(x_0,y)
		\]
		and from the monotonicity property of $v$, see Remark~\ref{remark.mono}, we have
		\begin{equation*}
		0=-\lim_{y\to0}y^av_y(x_0,y).
		\end{equation*}
		By the maximum principle, the point $(x_0,0)$ is also the
		minimum of $\varphi(x,y)=G(u(x,0))-G(1)-\eta(x,y)$. Since $\varphi$ is the extension of $\psi$ satisfying~(\ref{a.positive}), we have that $(x_0,0)$ is a strict minimum for $\varphi$ and we get a contradiction by considering

		\begin{align*}
		0&>-y^a\partial_y\varphi(x_0,y)_{|_{\{y=0\}}}=y^a\partial_y\eta(x,y)_{|_{\{y=0\}}}\\&
		=\lim_{y\to0}\frac{y^{2a}}{2}\big(v_y^2(x_0,y)-v_x^2(x_0,y)\big)=\lim_{y\to0}\frac{y^{2a}}{2}v_x^2(x_0,y)\geq0.
		\end{align*}
		This contradiction proves~\eqref{C} in the case $a\geq0$. Now we deal with the case $a<0$. First, we compute
		\begin{equation}
		\label{div.minus.a}
		\mathrm{div}(y^{-a}\nabla\varphi(x,y))=-ay^{-1}v_y^2(x,y).
		\end{equation}
		Recalling that we are supposing that a negative minimum of $\varphi$ is achieved at $(x_0,y_0)\in\R^n\times[0,1)$, we want to show that $y_0=0$. Since now $a$ is negative, we have to add an extra term from~(\ref{div.minus.a}) and consider
		\[
		0=\mathrm{div}(y^{-a}\nabla\varphi)+ay^{-1}v_y^2=\mathrm{div}(y^{-a}\nabla\varphi)+\big(ay^{-a-1}\frac{v_y}{v_x}\big)\varphi_x.
		\]
		{{F}rom} the fact that this last operator is uniformly elliptic with continuous coefficients in compact sets of $\R^n\times(0,1)$, it follows that $y_0=0$. Now, we can obtain a contradiction by considering
		\begin{align*}
		0&\geq-\liminf_{y\to0^+}y^{-a}\partial_y\varphi(x_0,y)=y^a\partial_y\eta(x,y)_{|_{\{y=0\}}}\\&
		=\liminf_{y\to0}\frac{1}{2}\big(v_x^2(x_0,y)-v_y^2(x_0,y)\big)=\frac{1}{2}v_x^2(x_0,0)>0.
		\end{align*}
		Notice that we have used also the fact that, from~(\ref{gradient1}), $\lvert v_y(x_0,y)\rvert\leq Cy^{-a}\to0$ as $y\to0^+$. This proves~(\ref{C}) also when $a$ is negative and finishes the proof.
	\end{proof}
	Now that we have characterized the potential $G$ in presence of a layer solution, we are able to deduce Corollary~\ref{corG} from Lemma~\ref{barv}.
	\begin{proof}[Proof of Corollary~\ref{corG}]
		We want to find a layer solution of~(\ref{mainsis}) in order to use the
characterization given by Lemma~\ref{aboutG}. Our candidate
is the function $\overline{w}$ defined as
		\begin{equation}
		\label{9234}
		\overline{w}:=2\bigg(\frac{\overline{v}-\widetilde{M}}{M -\widetilde{M}}\bigg)-1.
		\end{equation}
		We take the function $h(\overline{w}):=\frac{2f(\overline{v})}{M-\widetilde{M}}$, and we call $H$ the potential associated to $h$, so $H'=-h$. Then, $\overline{w}$ is a solution of problem~(\ref{mainsis}) with $n=2$ with $f$ replaced by the new nonlinearity $h(\overline{w})$. By Lemma~\ref{barv} $\overline{w}$ is either constant or
one-dimensional in the $\{x_1,x_2\}$-plane, and it is monotone if it is not constant. According to
Definition~\ref{LAYER}, we have that~$\overline{w}$ is a layer solution of problem~(\ref{mainsis}) (with the new nonlinearity $h$). Now we can apply Lemma~\ref{aboutG}, and obtain that $H$ is a
double-well potential. Restating this result for $G$, we have that $G$ is forced to satisfy $G'(\widetilde{M})=G'(M)=0$ and  $G>G(\widetilde{M})=G(M)$ in  $(\widetilde M, M)$.
		\\ Using $\underline{v}$ instead of $\overline{v}$, we can prove with the same argument that $G'(\widetilde{m})=G'(m)=0$ and $G>G(\widetilde{m})=G(m)$ in  $( m, \widetilde m)$.
	\end{proof}
	
As a final step before giving the proof of Theorem~\ref{thmono}, we need to prove
the following
result, which ensures that if $v$ is a bounded monotone solution 
for problem~\eqref{mainsis}, then it is a minimizer
in a particular class of functions. This result
can be seen as the counterpart of Proposition~6.2 in~\cite{CC2} for the case into consideration here,
in which we have to take into account the singularity and degeneracy of the weights,
the different domain of the equation and the different boundary conditions.

	\begin{lemma}
		\label{minim}
		Let $f\in C^{1,\gamma}(\R)$, with $\gamma>\max \{0,-a\}$, $v$ a bounded solution of~(\ref{mainsis}) with $n=3$, such that its trace $u(x)=v(x,0)$ is monotone in its
third variable. 

Then
		\[
		\mathcal{E}_R(v)\leq\mathcal{E}_R(w)
		\]
		for every $w\in H^1(C_R, y^a)$ such that $w=v$ on $\partial B_R\times(0,1)$ and $\underline{v}\leq w\leq\overline{v}$ in $C_R$.
	\end{lemma}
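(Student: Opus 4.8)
The plan is to prove the stronger statement that $v$ itself minimizes $\mathcal{E}_R$ over the convex class
\[
\mathcal{A}_R:=\big\{w\in H^1(C_R,y^a):\ w=v \text{ on }\partial B_R\times(0,1),\ \underline v\le w\le\overline v \text{ in }C_R\big\},
\]
which contains $v$. First I would show that $\mathcal{E}_R$ attains its minimum on $\mathcal{A}_R$: a minimizing sequence has equibounded weighted Dirichlet energy (the potential term being bounded on $\mathcal{A}_R$ since $\underline v\le w\le\overline v$), hence is bounded in $H^1(C_R,y^a)$; passing to a weak limit and using lower semicontinuity of the Dirichlet term, the compact (trace) inclusion $H_0(C_R,y^a)\subset L^2(B_R)$ recalled in the proof of Lemma~\ref{stability} together with dominated convergence for the potential term, and the weak closedness of the order constraint and of the lateral boundary condition, one obtains a minimizer $w_0\in\mathcal{A}_R$. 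It then suffices to show $w_0=v$. Throughout I will use that, by Lemma~\ref{mono.extension} (and the Hopf lemma at $\{y=1\}$ applied to $\partial_{x_3}v$, which solves the linearized problem there with homogeneous weighted Neumann datum), we may assume $\partial_{x_3}v>0$ on $\R^3\times[0,1]$, so that $\underline v<v<\overline v$ and $\underline v<\overline v$ strictly; in particular $w_0<\overline v$ and $w_0>\underline v$ on the lateral boundary.

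The core of the argument is to show that the obstacles are never active, i.e. $\{w_0=\overline v\}=\{w_0=\underline v\}=\emptyset$ in $\overline{C_R}$, so that $w_0$ solves~\eqref{mainsis} in $C_R$. Suppose $w_0$ touched $\overline v$ at some point $p$ not lying on the lateral boundary; testing the variational inequality for the constrained minimizer with \emph{nonpositive} variations supported near $p$ (admissible because $w_0>\underline v$ near $p$) shows that $w_0$ is a subsolution of~\eqref{mainsis} near $p$. Since $\overline v$ is a solution (Lemma~\ref{barv}) lying above $w_0$ and touching it at $p$, writing $f(w_0)-f(\overline v)=c(x)(w_0-\overline v)$ with $c\in L^\infty$ (as $f\in C^1$) and applying the strong maximum principle for $\mathrm{div}(y^a\nabla\cdot)$ --- combined with the weighted Hopf lemma (Lemma~4.11 of~\cite{CY1}) if $p\in\{y=0\}$, and with the ordinary Hopf lemma after even reflection if $p\in\{y=1\}$ --- forces $w_0\equiv\overline v$ in $C_R$, contradicting $w_0=v<\overline v$ on the lateral boundary. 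The set $\{w_0=\underline v\}$ is excluded symmetrically, with nonnegative variations. Hence $w_0$ solves~\eqref{mainsis} in $C_R$.

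Finally I would close by sliding against the foliation $v^t(x',x_3,y):=v(x',x_3+t,y)$, $t\in\R$: the $v^t$ are solutions of~\eqref{mainsis}, strictly increasing and ordered in $t$, converging locally uniformly to $\overline v$ as $t\to+\infty$ and to $\underline v$ as $t\to-\infty$ (Corollary~\ref{semilinear}, as in the proof of Lemma~\ref{barv}). Since $w_0<\overline v$ on the compact set $\overline{C_R}$, one has $v^t>w_0$ there for $t$ large; let $t_*:=\inf\{t\ge0:\ v^t\ge w_0\text{ on }\overline{C_R}\}$ and suppose $t_*>0$. By continuity and minimality of $t_*$, $v^{t_*}$ touches $w_0$ at some $q\in\overline{C_R}$: if $q\in\partial B_R\times(0,1)$ then $v^{t_*}=v^0$ at $q$, forcing $t_*=0$ by strict monotonicity; if $q$ does not lie on the lateral boundary, the strong maximum principle / Hopf lemma applied to the two solutions $v^{t_*}\ge w_0$ (exactly as above) gives $v^{t_*}\equiv w_0$ in $C_R$, hence $v^{t_*}=v$ on the lateral boundary and again $t_*=0$ --- a contradiction in both cases. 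Therefore $t_*=0$, i.e. $v\ge w_0$ in $\overline{C_R}$; sliding the $v^{-t}$ upward gives $v\le w_0$ in the same way, so $w_0=v$ and $\mathcal{E}_R(v)=\mathcal{E}_R(w_0)\le\mathcal{E}_R(w)$ for every $w\in\mathcal{A}_R$, as claimed.

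I expect the main difficulty to be not this comparison scheme but its technical underpinnings for the degenerate/singular operator $\mathrm{div}(y^a\nabla\cdot)$: the correct formulation of the two-obstacle variational inequality in $H^1(C_R,y^a)$ and the boundary regularity of the constrained minimizer, and above all the strong maximum principle and Hopf lemma at the weighted boundary $\{y=0\}$, where the nonlinear condition $-\lim_{y\to0}y^av_y=f(v)$ must be carried along with the weight --- all of which, however, should be within reach of the regularity theory recalled in Section~\ref{sec.regularity} (\cite{FKS,FJK}) and the estimates of~\cite{CY1}.
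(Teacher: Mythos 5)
Your argument follows the same route as the paper's: both build a constrained minimizer over the order interval $[\underline v,\overline v]$ by the direct method, show that it must solve the free Euler--Lagrange problem (the paper delegates this obstacle-avoidance step to an adaptation of Lemma~4.1 of~\cite{CY2}, which you instead spell out via a touching argument against the solutions $\overline v$ and $\underline v$), and then identify the minimizer with $v$ by sliding the $x_3$-translates $v^t$ and invoking the strong maximum and Hopf principles for $\mathrm{div}(y^a\nabla\cdot)$. This is correct and is essentially the paper's proof, written in a somewhat more self-contained way.
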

	\begin{proof}
		The proof of this property follows the proof of Proposition 6.2 in \cite{CC2} and is based on two results:
		\begin{itemize}
			\item[(i)]
			Uniqueness of solutions to the problem
			\begin{align}
			\label{uniqsis}
			\begin{cases}
			\mathrm{div}(y^a\nabla w)=0 \qquad &\text{in}\,\,C_R \\
			w=v \qquad &\text{on}\,\,\partial B_R\times(0,1) \\
			-y^a\partial_y w=f(w) \qquad &\text{on}\,\, B_R\times\{y=0\}\\
			\partial_y w=0 \qquad &\text{on}\,\, B_R\times\{y=1\}\\
			\underline{v}\leq w\leq\overline{v} \qquad &\text{in}\,\, B_R\times(0,1) \\
			\end{cases}
			\end{align}
			
			We give here a proof of this result that uses the idea of sliding the function $v$ in the $x_n$-direction.
			Keeping in mind that $\underline{u}$ and $\overline{u}$ are respectively the trace of $\underline{v}$ and $\overline{v}$ on $\{y=0\}$, let $w$ be a solution of~(\ref{uniqsis}).

By Hopf Lemma (see Lemma~4.11 in~\cite{CY1}) and the maximum principle, we have that
			\begin{equation}
			\label{boundsw}
			\underline{v}< w<\overline{v} \qquad \text{in}\,\, \overline{C_R}.
			\end{equation}
			Now we slide the function $v$ in the direction of monotonicity $x_n$. We take
			\[
			v^t(x,y):=v(x_1,\dots,x_{n-1},x_n+t,y).
			\]
			Since $v_t\rightarrow \overline v$ uniformly  in $\overline C_R$ and by~(\ref{boundsw}), we have that $w<v^t$ in $\overline C_R$  for $t$ large enough. We take $s$ as
			\[
			s:=\inf\big\{t>0\quad\text{s.t.}\quad w<v^t \quad \mbox{in }\overline C_R \big\}.
			\]
			
			We need to prove that
			\begin{equation}
			\label{s0}
			s=0.
			\end{equation}
			Suppose by contradiction that $s>0$. Then we would have $w\leq v^s$ in all $\overline{B_R}\times[0,1] $ and there must be a point~$(\bar{x},\bar{y})$ in which the two functions coincide. But $(\bar{x},\bar{y})\notin \partial B_R\times(0,1)$ because 
			along~$\partial B_R\times(0,1)$ it holds that $w=v$, and we have the monotonicity hypothesis on~$v$. So $(\bar{x},\bar{y})$ must be either in $C_R$ or in $B_R\times\{y=0\}\cup B_R\times\{y=1\}$, but we get a contradiction either with the maximum principle or with the Hopf Lemma
			applied to the positive function $v^s-w$. Hence we proved~(\ref{s0}).

			\item[(ii)] Existence of a minimizer for $\mathcal{E}_R$ in the set:
			\[
			S_R=\{w\in H^1(C_R, y^a) : w\equiv v \,\,\text{on}\,\, \partial B_R\times(0,1),\, \underline{v}\leq w \leq\overline{v}\,\,\text{in}\,\,C_R\}.
			\]
		\end{itemize}
		This result is the analogue of the one obtained in Lemma~4.1 of~\cite{CY2} for layer solutions of the fractional Laplacian. The proof can be adapted by substituting $-1$ and $+1$, which are the limits of the layer solution in~\cite{CY2}, with $\underline{v}$ and $\overline{v}$, which are respectively a subsolution and a supersolution for problem~(\ref{mainsis}).
		
		We already know that $v$ is a solution to problem~(\ref{uniqsis}) and, in view of point (i), we have uniqueness of this solution. So this solution must coincide with the minimizer for $\mathcal{E}_R$ in $S_R$.	
	\end{proof}
		Now we are able to prove the energy estimate of Theorem~\ref{thmono} and to deduce the
		one-dimensional symmetry of monotone solutions from it.
	\begin{proof}[Proof of Theorem~\ref{thmono}]
		We follow the idea in the proof of Theorem 5.2 of~\cite{AAC} and  Theorem 1.3 of~\cite{CC1}, that is we show that the comparison function $\overline{w}$ defined in the previous section satisfies
			\begin{equation}
			\label{winbetween}
			\underline{v}\leq\overline{w}\leq\overline{v}.
			\end{equation}
			In this way, we have that~$\overline{w}$ belongs to~$S_R$, 
			which is the class of functions where $v$ minimizes the energy. We recall that $\overline{w}$ is defined as:
			\begin{equation}
			\overline{w}(x,y)=\tau\eta_R(x)+\big(1-\eta_R(x)\big)v(x,y).
			\end{equation}
			If we prove that $\tau\in[\sup\underline{u},\inf\overline{u}]$, we also have~(\ref{winbetween}) from the maximum principle.

In order to prove this, we use Corollary~\ref{corG}. We set $m=\inf\underline{u}$, $\widetilde{m}=\sup\underline{u}$ and $\widetilde{M}=\inf\overline{u}$, $M=\sup\overline{u}$ .
			We have
			\[
			G>G(m)=G(\widetilde{m}) \quad \text{in}\,\,(m,\widetilde{m}) \quad \text{if $\underline{u}$ is not constant};
			\]
			\[
			G>G(M)=G(\widetilde{M}) \quad \text{in}\,\,(\widetilde{M},M) \quad \text{if $\overline{u}$ is not constant}.
			\]
			Suppose that $m\neq\widetilde{M}$: In all possible cases we have $\widetilde{m}\leq\widetilde{M}$, so there exists $\tau$ in $[\widetilde{m},\widetilde{M}]$ such that $G(\tau)=c_u$, where $c_u$ is the infimum of $G$ in the range of $u$. Hence
			\[
			\sup\underline{v}=\sup\underline{u}=\widetilde{m}\leq \tau\leq\widetilde{M}=\inf\overline{u}=\inf\overline{v}
			\]
			and~(\ref{winbetween}) is proved. Hence, $\overline{w}\in S_R$ and from Lemma~\ref{minim}
			we can conclude that~(\ref{estimate.mono}) holds true.
			
			We have still to consider the special case in which $m=\widetilde{M}$. {{F}rom} Corollary~\ref{corG}, it follows that $G\geq G(m)=G(\widetilde M)$ in $(m,M)$ and all the solutions $v^t$ are obtained by	translation of $v$. We can define a
			new competitor as
			\[
			\widetilde{w}(x,y)=\underline{v}(x,y)\eta_R(x)+\big(1-\eta_R(x)\big)v(x,y),
			\]
			with $\eta_R$ the same cut-off function as before.
			Using again the gradient bounds for $v$ and the fact that $\underline{v}$ depends only on two variables (on $y$ and on one variable in the $(x_1,x_2)$-plane), we deduce that
			\[
			\mathcal{E}_R(\widetilde{w})\leq CR^2.
			\]
			Clearly $\widetilde{w}\in S_R$, so we can use Lemma~\ref{minim} also in this case and obtain the energy estimate for monotone solutions.
	\end{proof}
	{{F}rom} the energy estimates in~(\ref{estimate.min}) and~(\ref{estimate.mono}) we obtain
the one-dimensional symmetry of both minimizers and monotone solutions by a direct application of Theorem~\ref{DllV}.
	\begin{proof}[Proof of Theorem~\ref{monotone}]
		Either if $v$ is a bounded minimizer or is a bounded solution whose trace on $\{y=0\}$ is monotone, we have from Theorem~\ref{thmono} and Theorem~\ref{globmin} that
		\[
		\mathcal{E}_R(v)\leq CR^2.
		\]
		This energy estimate is enough to apply Theorem~\ref{DllV} and to obtain that there exist $v_0:\R\times(0,1)\rightarrow\R$ and $\omega
		\in {\rm S}^2$ such that
		\[
		v(x,y)=v_0 (\omega\cdot x, y)\qquad{\mbox{
				for any $(x,y)\in\R^3\times(0,1)$.}}
		\qedhere\]
	\end{proof}
	
	\section{References}
\begin{biblist}[\normalsize]

	\bib{ABA}{article}{
		author={Abatangelo, N.},
		author={Valdinoci, E.},
		title={Getting acquainted with the fractional Laplacian},
		date={2017},
	}

	\bib{AAC}{article}{
		author={Alberti, G.},
		author={Ambrosio, L.},
		author={Cabr\'{e}, X.},
		title={On a long standing conjecture of E. De Giorgi: symmetry in 3D for general nonlinearities and a local minimality property},
		journal={Acta Appl. Math.},
		volume={65},
		date={2001},
		pages={9-33},
	}
	
	\bib{AC}{article}{
		author={Ambrosio, L.},
		author={Cabr\'{e}, X.},
		title={Entire solutions of semilinear elliptic equations in $\R^3$ and a conjecture of de Giorgi},
		journal	= {J. Amer. Math. Soc.},
		volume={13},
		date={2000},
		pages={725-739},
	}
	
	\bib{BV}{book}{
		author={Bucur, C.},
		author={Valdinoci, E.},
		title={Nonlocal Diffusion and Applications},
		series={Lecture Notes of the Unione Matematica Italiana},
		volume={20},
		publisher={Springer},
		date={2016},
		isbn={978-3-319-28738-6},
	}
	
	\bib{CaffC}{book}{
		author={Cabr\'{e}, X.},
		author ={Caffarelli, L.},
		title ={Fully Nonlinear Elliptic Equations },
		publisher = {Amer. Math. Soc.},
		isbn =      {0821804375,9780821804377},
		year =      {1995},
		series =    {Colloq. Publ. Amer. Math. Soc.},
		edition =   {},
		volume =    {43},
	}
	
	\bib{CC1}{article}{
		author={Cabr\'{e}, X.},
		author={Cinti, E.},
		title={Energy estimates and 1-D symmetry for nonlinear equations involving the half-Laplacian},
		journal={Discrete Contin. Dyn. Syst.},
		volume={28},
		date={2010},
		pages={1179-1206},
	}
	
	\bib{CC2}{article}{
		author={Cabr\'{e}, X.},
		author={Cinti, E.},
		title={Sharp energy estimates for nonlinear fractional diffusion equations},
		journal={Calc. Var. Partial Differential Equations},
		volume={49},
		date={2014},
		pages={233-269},
	}
	
	\bib{CCS}{article}{
		author = {Cabr\'{e}, X.},
		author = {Cinti, E.},
		author = {Serra, J.},
		title = {Stable nonlocal phase transitions},
		journal = {Preprint},
		date = {2017},
	}
	
	\bib{CY1}{article}{
		author={Cabr\'{e}, X.},
		author={Sire, Y.},
		title={Nonlinear equations for fractional Laplacians I: Regularity, maximum principles and Hamiltonian estimates},
		journal={Ann. Inst. H. Poincaré Anal. non Linéaire},
		volume={31},
		date={2014},
		pages={23-53},
	}
	
	\bib{CY2}{article}{
		author={Cabr\'{e}, X.},
		author={Sire, Y.},
		title={Nonlinear equations for fractional Laplacians II: existence, uniqueness, and qualitative properties of solutions},
		journal={Trans. Amer. Math. Soc.},
		volume={367},
		date={2015},
		pages={911-941},
	}
	
	\bib{CSM}{article}{
		author={Cabr\'{e}, X.},
		author={Sol\`{a}-Morales, J.},
		title={Layer solutions in a half-space for boundary reactions},
		journal={Comm. Pure and Appl. Math.},
		volume={58},
		date={2005},
		pages={1678-1732},
	}

	\bib{023495}{article}{
		author = {Cinti, E.},
		author = {Serra, J.},
		author = {Valdinoci, E.}
		title = {Quantitative flatness results and BV-estimates
		for stable nonlocal minimal surfaces},
		journal = {ArXiv e-prints},
		eprint = {1602.00540},
		date = {2016},
		adsurl = {https://arxiv.org/abs/1602.00540},
	}
	
	\bib{CS}{article}{
		author={Caffarelli, L.},
		author={Silvestre, L.},
		title={An extension problem related to the fractional Laplacian},
		journal={Comm. Partial Differential Equations},
		volume={32},
		date={2007},
		pages={1245-1260},
	}
	
	\bib{MR2653742}{article}{
   author={Chermisi, Milena},
   author={Valdinoci, Enrico},
   title={A symmetry result for a general class of divergence form PDEs in
   fibered media},
   journal={Nonlinear Anal.},
   volume={73},
   date={2010},
   pages={695--703},
}

	\bib{MR533166}{article}{
   author={De Giorgi, Ennio},
   title={Convergence problems for functionals and operators},
   conference={
      title={Proceedings of the International Meeting on Recent Methods in
      Nonlinear Analysis},
      address={Rome},
      date={1978},
   },
   book={
      publisher={Pitagora, Bologna},
   },
   date={1979},
   pages={131--188},
   review={\MR{533166}},
}
	
	\bib{DllV}{article}{
		author={de la Llave, R.},
		author={Valdinoci, E.},
		title={Symmetry for a Dirichlet-Neumann problem arising in water waves},
		journal={Math. Res. Lett.},
		volume={16(5)},
		date={2009},
		pages={909-918},
	}
	
	\bib{H}{article}{
		author={Di Nezza, E.},
		author={Palatucci, G.},
		author={Valdinoci, E.},
		title={Hitchhiker's guide to the fractional Sobolev spaces},
		journal={Bull. Sci. Math.},
		volume={136},
		date={2012},
		pages={521-573},
	}
	
	\bib{DFV}{article}{
		author = {Dipierro, S.},
		author = {Farina, A.},
		author = {Valdinoci, E.},
		title = {A three-dimensional symmetry result for a phase transition equation in the genuinely nonlocal regime},
		journal = {ArXiv e-prints},
		eprint = {1705.00320},
		date = {2017},
		adsurl = {},
	}

		\bib{XFAH}{article}{
		author = {Dipierro, S.},
		author = {Serra, J.},
		author = {Valdinoci, E.},
		title = {Improvement of flatness for nonlocal phase transitions},
		journal = {ArXiv e-prints},
		eprint = {1611.10105},
		date = {2016},
		adsurl = {},
	}
	
	\bib{DSV}{article}{
		author = {Dipierro, S.},
		author = {Serra, J.},
		author = {Valdinoci, E.},
		title = {Improvement of flatness for nonlocal phase transitions},
		journal = {ArXiv e-prints},
		eprint = {1611.10105},
		date = {2016},
		adsurl = {http://adsabs.harvard.edu/abs/2016arXiv161110105D},
	}

	\bib{Nature07}{article}{
author = {Edwards, A. M.}, 
author = {Phillips, R. A.}, 
author = {Watkins, N. W.}, 
author = {Freeman, M. P.},
author = {Murphy, E. J.},
author = {Afanasyev, V.}, 
author = {Buldyrev, S. V.},
author = {da Luz, M. G.},
author = {Raposo, E. P.},
author = {Stanley, H. E.}, 
author = {Viswanathan, G. M.},
title = {Revisiting L\'evy flight 
search patterns of wandering
albatrosses, bumblebees and deer},
 journal={Nature},
   volume={449},
   date={2007},
   pages={1044--1048},
}
	
	\bib{FS}{article}{
		author = {Figalli, A.},
		author = {Serra, J.},
		title = {On stable solutions for boundary reactions: a {D}e {G}iorgi-type
			result in dimension $4+1$},
		journal = {ArXiv e-prints},
		eprint = {1705.02781},
		date = {2017},
		adsurl = {http://adsabs.harvard.edu/abs/2017arXiv170502781F},
	}

	\bib{FJK}{article}{
		author={Fabes, E. B.},
		author={Jerison, D. S.},
		author={Kenig, C. E.},
		title={The Wiener test for degenerate elliptic equations},
		journal={Ann. Inst. Fourier},
		volume={32},
		date={1982},
		pages={151-182},
	}

	\bib{FKS}{article}{
		author={Fabes, E. B.},
		author={Kenig, C. E.},
		author={Serapioni, R. P.},
		title={The local regularity of solutions of degenerate elliptic equations},
		journal={Comm. Partial Differential Equations},
		volume={7},
		date={1982},
		pages={77-116},
	}
	
	\bib{GT}{book}{
		author={Gilbarg, D.},
		author={Trudinger, N. S.},
		title={Elliptic partial differential equations of second order},
		series={Classics in Mathematics},
		note={Reprint of the 1998 edition},
		publisher={Springer-Verlag},
		date={Berlin, 2001},
		isbn={3.540-41160-7},
	}
	
	\bib{S}{article}{
		author = {Savin, O.},
		title = {Rigidity of minimizers in nonlocal phase transitions},
		journal = {ArXiv e-prints},
		eprint = {1610.09295},
		date={2016},
		adsurl = {http://adsabs.harvard.edu/abs/2016arXiv161009295S},
	}

	\bib{MR2948285}{article}{
   author={Savin, O.},
   author={Valdinoci, E.},
   title={$\Gamma$-convergence for nonlocal phase transitions},
   journal={Ann. Inst. H. Poincar\'e Anal. Non Lin\'eaire},
   volume={29},
   date={2012},
   pages={479--500},
}
	
	\bib{SV}{article}{
		author={Savin, O.},
		author={Valdinoci, E.},
		title={Some monotonicity results in the calculus of variations},
		journal={J. Funct. Anal.},
		volume={264},
		date={2013},
		pages={2469-2496},
	}

	\bib{Sil}{article}{
		author={Silvestre, L.},
		title={Regularity of the obstacle problem for a fractional power of the Laplace operator},
		journal={Comm. Pure Appl. Math.},
		volume={60},
		date={2007},
		pages={67-112},
		}
	
	\bib{YV}{article}{
		author={Sire, Y.},
		author={Valdinoci, E.},
		title={Fractional Laplacian phase transitions and boundary reactions: a
			geometric inequality and a symmetry result},
		journal={J. Funct. Anal.},
		volume={256},
		date={2009},
		pages={1842--1864},
	}

\end{biblist}
\end{document}